\documentclass[11pt]{amsart}
\usepackage{amsfonts,amssymb,amsthm}
\usepackage{amsmath,amscd}
\usepackage{pstricks}
\usepackage{pstricks,pst-node}
\usepackage{mathrsfs}
\usepackage[all]{xy}
\usepackage{enumitem}
\DeclareMathAlphabet{\mathpzc}{OT1}{pzc}{m}{it}

\theoremstyle{plain}
\newtheorem{theorem}{Theorem}[section]
\newtheorem{proposition}[theorem]{Proposition}

\newtheorem{lemma}[theorem]{Lemma}

\theoremstyle{definition}

\newtheorem{remark}[theorem]{Remark}

\numberwithin{equation}{section}

\newcommand{\id}{\operatorname{id}}
\newcommand{\End}{\operatorname{End}}
\newcommand{\Hom}{\operatorname{Hom}}
\newcommand{\Coker}{\operatorname{Coker}}

\newcommand{\Spec}{\operatorname{Spec}}

\newcommand{\Frac}{\operatorname{Frac}}
\newcommand{\add}{\operatorname{add}}

\newcommand{\wo}{w_{0,r}}
\newcommand{\X}{x_{(r,0,\ldots,0)}}

\def\oop{{\text{\rm op}}}

\newcommand{\mpk}{\mathpzc K}
  \newcommand{\vep}{\varepsilon}

\newcommand{\bfi}{{\mathbf{i}}}

\newcommand{\bfU}{{\mathbf{U}}}

\def\fS{{\frak S}}

\newcommand{\msD}{\mathscr D}

\newcommand{\msH}{\mathsf{H}}
\newcommand{\msT}{\mathsf{T}}
\newcommand{\msS}{\mathsf{S}}
\newcommand{\msP}{\mathsf{P}}
\newcommand{\msZ}{\mathsf{Z}}
\newcommand{\msC}{\mathsf{C}}

\def\sfz{{\mathsf z}}

\def\sH{{\mathcal H}}

\def\sS{{\mathcal S}}

\def\sZ{{\mathcal Z}}

\newcommand{\mbn}{\mathbb N}
\newcommand{\mbq}{\mathbb Q}

\newcommand{\mbz}{\mathbb Z}

\newcommand{\ttf}{\mathsf{f}}
\newcommand{\ttg}{\mathsf{g}}

\newcommand{\tth}{\mathtt{h}}

\newcommand{\spann}{\operatorname{span}}

\newcommand{\la}{{\lambda}}
\newcommand{\La}{\Lambda}
\newcommand{\ga}{{\gamma}}

\newcommand{\Og}{\Omega_\sZ}
\newcommand{\Ogk}{\Omega_{\mpk}}

\newcommand{\og}{\omega}

\newcommand{\up}{\upsilon}
\newcommand{\vi}{\varphi}

\newcommand{\sg}{\sigma}

\def\ggp#1#2{\left[\kern-3.2pt\left[{#1\atop #2}\right]\kern-3.2pt\right]}

\def\leq{\leqslant}\def\geq{\geqslant}
\def\le{\leqslant}\def\ge{\geqslant}

\newcommand{\bop}{\bigoplus}

\newcommand{\ot}{\otimes}

\newcommand{\han}{\subseteq}

\newcommand{\lra}{\longrightarrow}
\newcommand{\ra}{\rightarrow}

\newcommand{\zr}{\zeta_r}
\newcommand{\xr}{\xi_r}

\newcommand{\afmsD}{{\mathscr D}^\vtg}

\newcommand{\mnmod}{\!\!\!\mod\!}

\newcommand{\vtg}{{\!\vartriangle\!}}

\newcommand{\afSrk}{{\mathcal S}_{\vtg}(n,r)_{\mathpzc K}}
\newcommand{\afHrk}{{\mathcal H}_{\vtg}(r)_{\mathpzc K}}
\newcommand{\Hrk}{{\mathcal H}(r)_{\mathpzc K}}

\newcommand{\fSr}{\fS_r}
\newcommand{\affSr}{{\fS_{\vtg,r}}}

\newcommand{\afHr}{{\sH_\vtg(r)}_\sZ}

\newcommand{\afbfHr}{{\boldsymbol{\mathcal H}_\vtg(r)}}

\newcommand{\afSr}{{\mathcal S}_{\vtg}(n,r)_\sZ}

\newcommand{\afbfSr}{{\boldsymbol{\mathcal S}}_\vtg(n,r)}

\newcommand{\afmbnn}{\mathbb N_\vtg^{n}}

\newcommand{\afLanr}{\Lambda_\vtg(n,r)}

\begin{document}
\title{Affine quantum Schur--Weyl duality}

\author{Qiang Fu}
\address{School of Mathematical Sciences,
Key Laboratory of Intelligent Computing and Applications (Ministry of Education),
Tongji University, Shanghai, 200092, China.}
\email{q.fu@hotmail.com, q.fu@tongji.edu.cn}
\author{Jun Hu}
\address{Key Laboratory of Algebraic Lie Theory and Analysis of Ministry of Education,
School of Mathematics and Statistics, Beijing Institute of Technology, Beijing, 100081, P.R. China}
\email{junhu404@bit.edu.cn}
\thanks{Supported by the National Natural Science Foundation
of China (12371032, 12431002)}

\begin{abstract}
Let $\mpk$ be an arbitrary commutative ring containing an invertible element
$\vep$. Let $\afHrk$ be the extended affine Hecke algebra of type $A$ with Hecke parameter $\vep$, let $\Ogk^{\otimes r}$ be the
affine tensor space, and let $\afSrk$ be the corresponding affine
quantum Schur algebra. We first prove that the natural right action
of $\afHrk$ on $\Ogk^{\otimes r}$ is always faithful.

Assume further that $\mpk$ is a field of
characteristic $0$ and that $\vep$ is not a root of unity. We prove
that, for any $n\geq 2$, the natural algebra homomorphism
$
\xi_r:\afHrk\rightarrow
\End_{\afSrk}(\Ogk^{\otimes r})^{\mathrm{op}}$
is an isomorphism. This proves Conjecture~3.8.8 of \cite{DDF}.
As an application, we prove the conjecture formulated in \cite[5.2.4]{DDF} concerning the center of the affine quantum Schur algebra.

We also prove that $\afSrk$ is left and right Noetherian whenever
$\mpk$ is a Noetherian commutative ring, which verify a conjecture in \cite[Rem. 1.7]{DY}.
\end{abstract}

 \sloppy \maketitle
\section{Introduction}

Schur-Weyl duality stands as one of the most foundational and influential correspondences in classical representation theory, establishing a canonical commuting action duality between the symmetric group $\mathfrak{S}_r$ and the general linear group $\mathrm{GL}_n(\mathbb{C})$ on the tensor space $(\mathbb{C}^n)^{\otimes r}$. This classical framework (\cite{Schur}, \cite{Weyl}) elegantly interconnects the finite-dimensional representation theories of symmetric groups and general linear groups over the complex field $\mathbb{C}$. Later pioneering works by Carter-Lusztig \cite{CL}, De Concini-Procesi \cite{CPr} and Thrall \cite{Th} (see also \cite{BD}) systematically established the positive-characteristic version of Schur-Weyl duality, which bridge the modular representation theories of symmetric groups and general linear groups through a double centralizer correspondence.

The type $A$ quantum Schur-Weyl duality, first introduced by Jimbo \cite{Jim}, replaces the general linear group $\mathrm{GL}_n(\mathbb{C})$ with the quantum enveloping algebra $\bfU(\mathfrak{gl}_n)$ over $\mathbb{Q}(v)$ and substitutes the symmetric group algebra $\mathbb{C}\mathfrak{S}_r$ with the Iwahori-Hecke algebra $\mathcal{H}(\mathfrak{S}_r)$ defined over the same field $\mathbb{Q}(v)$. In the early 1990s, the celebrated BLM construction due to Beilinson, Lusztig, and MacPherson \cite{BLM} gave a geometric realization of quantum $\frak{gl}_n$ by using a geometric construction of  quantum Schur algebras via partial flag varieties. Building on the core results of \cite{BLM}, Du \cite{Du95} established the surjection from the Lusztig integral form ${U}(\mathfrak{gl}_n)_{\sZ}$ to the integral quantum Schur algebra, where $\sZ=\mbz[v,v^{-1}]$. This establishes one fundamental component of type $A$ quantum Schur-Weyl duality in the integral setting and, consequently, allows specialization to any root-of-unity parameters. The complementary component of the duality, namely the surjective homomorphism from the integral Iwahori-Hecke algebra $\mathcal{H}(\mathfrak{S}_r)_\sZ$ to the endomorphism algebra of the $v$-tensor space (viewed as a module over the integral quantum Schur algebra), was proved by Du, Parshall, and Scott in \cite{DPS}.

The subsequent developments were directed toward establishing an affine
analogue of quantum Schur--Weyl duality. Ginzburg--Vasserot \cite{GV}
and Lusztig \cite{Lu99} constructed algebra homomorphisms from quantum
affine $\frak{sl}_n$ to affine quantum Schur algebras. Deng, Du, and Fu \cite{DDF}
established the surjection from quantum affine $\mathfrak{gl}_n$ onto
affine quantum Schur algebras over $\mbq(v)$. In the classical affine
setting, Fu \cite{Fu13} established an integral analogue, namely a
surjection from an integral form of the universal enveloping algebra
of $\widehat{\mathfrak{gl}}_n$ onto the affine Schur algebra over
$\mathbb Z$. Subsequently, using the BLM realization of
quantum affine $\mathfrak{gl}_n$ developed in \cite{DF15}, Du--Fu \cite{DF19}
established the
corresponding integral affine quantum version, namely the
surjection from an integral form of quantum affine $\mathfrak{gl}_n$
onto the affine quantum Schur algebra over $\sZ$.
When $n\geq r$, the affine tensor space  $\Omega_{\mpk}^{\otimes r}$ contains the affine Hecke algebra $\afHrk$ as a direct summand, and it follows readily that the natural algebra homomorphism $\xi_r$ from $\afHrk$ to the endomorphism algebra of the affine tensor space $\Omega_{\mpk}^{\otimes r}$ (viewed as a module over the affine quantum Schur algebra) is an isomorphism. Deng, Du, and Fu conjectured in \cite[3.8.8]{DDF} that $\xi_r$ remains surjective when $n<r$, $\mpk=\mathbb{C}$ and $\vep$ is not a root of unity. One of the main results of this paper Theorem \ref{main} provides an affirmative answer
to this conjecture\footnote{Note that  Pouchin \cite{Pouchin}  developed a geometric approach to affine quantum Schur--Weyl duality. However a gap in the proof of the surjectivity statement in \cite[Th. 8.1]{Pouchin} was pointed out in \cite{Fu13}.}. As a further application, we prove that the center of the affine quantum Schur algebra ${\mathcal S}_{\vtg}(n,r)_{\mathbb{Q}(v)}$ is the polynomial algebra $\mathbb{Q}(v)[\sigma_1,\cdots,\sigma_{r-1},\sigma_r,\sigma_r^{-1}]$, where each $\sigma_i$ is defined as in \cite[\S5.2]{DDF}. This verifies the conjecture formulated in \cite[5.2.4]{DDF}.

Deng and Yang \cite{DY} showed that the affine quantum Schur algebra $\afbfSr$ over the rational functional field $\mathbb{Q}(v)$ is  Noetherian and affine quasi-hereditary in the sense of
\cite{Kl}. They conjectured in \cite[Rem. 1.7]{DY} that the affine quantum Schur algebra ${\mathcal S}_{\vtg}(n,r)_{\sZ}$ is Noetherian. In this paper, we prove this conjecture by establishing the
more general fact that the affine quantum Schur algebra ${\mathcal S}_{\vtg}(n,r)_{\mpk}$ is Noetherian for any Noetherian domain $\mpk$. In particular, this implies that ${\mathcal S}_{\vtg}(n,r)_{\sZ}$ is an affine quasi-hereditary $\sZ$-algebra in the sense of \cite{Kl} by \cite[Rem. 1.7]{DY}.

We now outline the main ideas and strategy of this paper. There are five highlights concerning our main results and approach. First, we show in Theorem \ref{injection} that for any $n,r$, any commutative ring $\mpk$ and any parameter $q\in\mpk^\times$, the right action of the extended affine Hecke algebra $\afHrk$ on the affine tensor space $\Omega_{\mpk}^{\otimes r}$ is faithful. In other words, the natural algebra homomorphism $\xi_r: \afHrk \rightarrow \operatorname{End}_{\afSrk} \left( \Omega_{\mpk}^{\otimes r} \right)^{\operatorname{op}}$ is always injective. This is already quite different with the classical quantum Schur-Weyl duality. In the classical theory, the natural algebra homomorphism $\mpk\mathfrak{S}_r\rightarrow\End((\mpk^n)^{\otimes r})^{\operatorname{op}}$ is injective if and only if $n\geq r$. When $n<r$, the description of the kernel of this natural homomorphism is addressed as the second fundamental theorem in invariant theory (\cite{CPr}, \cite{LZ}, \cite{BEG}). So this faithful result is a remarkable new phenomenon in the affine quantum Schur-Weyl duality theory.

Second, since all the modules involved in the affine quantum Schur-Weyl duality are infinite-dimensional, the semisimple representation theory method for classical quantum Schur-Weyl duality has no chance to be generalized to this affine quantum setting. To overcome this difficulty, we generalize Auslander-Solberg's ring theoretic approach (\cite{AS}) to double centralizer property to an infinite dimensional modules setting, see Lemma \ref{left approx} and Proposition \ref{embedding criterion}. Auslander-Solberg's result \cite{AS} considered only Artin algebras. We replace this Artin algebra assumption with a Noetherian assumption. We show in Proposition \ref{finite generated} that for any $n,r$, any Noetherian commutative ring $\mpk$ and any Hecke parameter $\vep\in\mpk^\times$, the affine quantum Schur algebra ${\mathcal S}_{\vtg}(n,r)_\mpk$ is Noetherian. Our modification of Auslander-Solberg's result \cite{AS} provide a general framework to deal with those Schur-Weyl reciprocity which may involves infinite-dimensional modules.

Third, we apply the general framework established by Lemma \ref{left approx} and Proposition \ref{embedding criterion} to study the the affine quantum Schur-Weyl duality when $\mpk$ is a field of characteristic $0$ and $\vep\in\mpk^\times$ is not a root of unity. Here the key point is to find an embedding of the cokernel ${\rm{Coker}}\,\ttf$ of the embedding $\ttf: \afHrk\rightarrow (\Omega_{\mpk}^{\otimes r})^{\oplus a}$ into some direct sum of copies of the affine tensor space $\Omega_{\mpk}^{\otimes r}$. We use the $Z(\afHrk)$ torsion-freeness of the cokernel ${\rm{Coker}}\,\ttf$ to construct the desired embedding in Lemma \ref{C embed H}, where $Z(\afHrk)$ denotes the center of the extended affine Hecke algebra $\afHrk$. This part of construction is quite general and of independent interest which may have further application in the study of double centralizer property in other context.

Forth, in order to prove the $Z(\afHrk)$ torsion-freeness of the cokernel ${\rm{Coker}}\,\ttf$, we consider the localization of the map $\ttf$ at the prime ideals of $Z(\afHrk)$ of height one. One of the crucial observation we used here is that the $Z(\afHrk)$ torsion-freeness of the cokernel ${\rm{Coker}}\,\ttf$ follows from the $Z(\afHrk)_{\mathfrak{p}}$ torsion-freeness of the cokernel ${\rm{Coker}}\,\ttf_{\mathfrak{p}}$ of the embedding $\ttf_{\mathfrak{p}}$ for every height one prime ideal $\mathfrak{p}$ of $Z(\afHrk)$, see Proposition \ref{height one torsion free}. Our proof makes use of Brown-Gordon-Stroppel's result \cite{BGS} which asserts the extended affine Hecke algebra $\afHrk$ of type $A$ is a free Frobenius extension of its center $Z(\afHrk)$. This observation is very useful and can be applied in the future study of the affine quantum Schur-Weyl duality at the integral setting and the roots of unity specializations.

Fifth, we analyze principal-series modules of the extended affine Hecke algebras $\afHrk$. Using the irreducibility criterion of Rogawski \cite{R}, as stated
in Chari--Pressley \cite[Prop.~3.4(c)]{CP}, we show that the localization $\ttf_{\mathfrak{p}}$ of the map $\ttf$ actually split for every height one prime ideal $\mathfrak{p}$ of $Z(\afHrk)$, see Lemma \ref{height one splitting}. The key point is to show for every height one prime ideal $\mathfrak{p}$ of $Z(\afHrk)$, the two-sided ideal of $\msH_{\mathfrak p}$ (i.e., the localization of $\afHrk$ at $\mathfrak{p}$) generated by a certain primitive idempotent $e$ of the semisimple Iwahori-Hecke algebra $\mathcal{H}(\mathfrak{S}_r)_\mpk$ coincides with $\msH_{\mathfrak p}$, see Proposition \ref{prop:height-one-fullness}. As a consequence, we show in Theorem \ref{main} that the natural algebra homomorphism $ \xr: \afHrk \longrightarrow \operatorname{End}_{\afSrk} (\Omega_{\mpk}^{\otimes r})^{\operatorname{op}}$ is always an isomorphism for all $n\geq2$.  That is, $$ \afHrk \xrightarrow{\sim} \operatorname{End}_{\afSrk} (\Omega_{\mpk}^{\otimes r})^{\operatorname{op}} . $$
Combining this result with the surjectivity theorem of Deng--Du--Fu from the quantum affine $\mathfrak{gl}_n$ to the affine quantum Schur algebra \cite[Th. 3.8.1]{DDF}, we obtain the complementary centralizer statement for the quantum affine $\mathfrak{gl}_n$-action on the affine tensor space. Thus the present work completes the affine Hecke algebra side of affine quantum Schur--Weyl duality.
As a further application, we determine in Theorem \ref{center Z0} the center of the affine quantum Schur algebra and prove a conjecture formulated in \cite[5.2.4]{DDF}.

The paper is organized as follows. In \S 2, we recall the extended affine Hecke algebra, affine quantum Schur algebra and affine tensor space, and define the natural algebra homomorphism $ \xr: \afHrk \longrightarrow \operatorname{End}_{\afSrk} (\Omega_{\mpk}^{\otimes r})^{\operatorname{op}}. $ In \S 3 we prove that $\xi_r$ is injective. In \S 4, we generalize Auslander-Solberg's ring theoretic approach (\cite{AS}) to double centralizer property to an infinite dimensional modules setting, and establish an abstract criterion for double centralizer property which may involve infinite dimensional modules. In \S 5 we first show in Proposition \ref{finite generated} that for any $n,r$, any Noetherian commutative ring $\mpk$ and any Hecke parameter $\vep\in\mpk^\times$, the affine quantum Schur algebra ${\mathcal S}_{\vtg}(n,r)_\mpk$ is Noetherian.
In particular,
by \cite[Rem.~1.7]{DY},
${\mathcal S}_{\vtg}(n,r)_\sZ$ is affine quasi-hereditary
in the sense of \cite{Kl}.  Then we study the height-one structure of the affine Hecke algebra through principal-series representations. In \S6 we prove in Theorem \ref{main} that $\xi_r$ is an algebra isomorphism when $\mpk$ is a field of characteristic $0$ and $\vep$ is not a root of unity. In \S 7 we use the main result Theorem \ref{main} to determine in Theorem \ref{center Z0} the center of the affine quantum Schur algebra and proves a conjecture formulated in \cite[5.2.4]{DDF}.

\section{Affine Schur--Weyl setting}

In this section we recall the basic objects involved in affine quantum Schur--Weyl duality. We first introduce the extended affine Hecke algebra of type $A$ and then define the affine tensor space and the associated affine quantum Schur algebra. The commuting actions on the tensor space give rise to the natural algebra homomorphism $\xi_r$, whose injectivity will be proved in the next section.

\subsection{Extended affine Hecke algebras of type $A$}
We begin with the extended affine symmetric group and the associated affine Hecke algebra.

Let $\affSr$ be the (extended) {affine symmetric group} consisting of all permutations
$w:\mbz\ra\mbz$ satisfying $w(i+r)=w(i)+r$ for $i\in\mbz$.
Let $W_r$ be the subgroup of
$\affSr$, the {\it Weyl group} of affine type $A$, generated by $S=\{s_i\}_{1\leq i\leq r}$, where $s_i$  is defined by
$s_i(j)=j$ for $j\not\equiv i,i+1\mnmod r$, $s_i(j)=j-1$ for
$j\equiv i+1\mnmod r$, and $s_i(j)=j+1$ for $j\equiv i\mnmod r$.
Let $\rho$ be the permutation of $\mbz$ sending $j$ to $j+1$ for all $j\in\mbz$. We extend the length function $\ell$ on $W_r$ to $\affSr$ by setting $\ell(\rho^mw)=\ell(w)$ for all $m\in\mbz,w\in W_r$.

Let $\sZ=\mbz[\up,\up^{-1}]$, where $\up$ is an indeterminate over $\mathbb{Z}$. The extended affine Hecke algebra $\afHr$ over $\sZ$ associated to
$\affSr$ is the $\sZ$-algebra which is free as $\sZ$-module with basis $\{T_w\}_{w\in\affSr}$, and is generated by $T_\rho,T_{\rho^{-1}},T_s, s\in S$ and whose multiplication rules are given by the formulas, for all $s\in S$ and $w\in\fS_{\vtg,r}$,
\begin{equation*}
\aligned
T_sT_w&=\begin{cases} (\up^2-1)T_{w}+\up^2T_{sw},\quad&\text{ if }\ell(sw)<\ell(w);\\
                                         T_{sw},\quad&\text{ if } \ell(sw)=\ell(w)+1,\end{cases}\\
T_\rho T_w&=T_{\rho w}.\\
\endaligned
\end{equation*}
Let $\afbfHr=\afHr\ot_\sZ\mbq(v)$, where $\mbq(\up)$ be the fraction field of $\sZ$.

The affine Hecke algebra $\afHr$ also admits the following {Bernstein
presentation}  which consists of
generators
$$T_i,\quad X_j\;(\text{$1\leq i\leq r-1$, $1\leq j\leq r$}),$$
 and relations
$$\aligned
 & (T_i+1)(T_i-\up^2)=0,\,\,\,\,T_iT_j=T_jT_i,\,\,&\forall\,1\leq j<i-1<r-1;\\
 & T_iT_{i+1}T_i=T_{i+1}T_iT_{i+1},\,\,&\forall\,1\leq i<r-1;\\
 & X_iX_i^{-1}=1=X_i^{-1}X_i,\,\, X_iX_j=X_jX_i,\,&\forall\,1\leq i,j\leq r;\\
 & T_iX_iT_i=\up^2 X_{i+1},\,\,X_jT_i=T_iX_j.\,\,&\forall\,1\leq i<r, j\notin\{i,i+1\}.
\endaligned$$

\subsection{Affine quantum Schur algebras and affine tensor spaces}
We next introduce the affine quantum Schur algebra and then describe its tensor space realization.

For $n\geq 1$, let $\afmbnn=\{(\la_i)_{i\in\mbz}\mid \la_i\in\mbn,\,\la_i=\la_{i-n}\ \text{for}\ i\in\mbz\}.$
For $r\geq 0$, let
$\afLanr=\{\la\in\afmbnn\mid\sg(\la):=\sum_{1\leq i\leq n}\la_i=r\}.$
For $\la\in\La_\vtg(n,r)$, since $\la$ is completely determined by $(\la_1,\ldots,\la_n)$, we associate to $\la$ a standard Young subgroup $\fS_\la:=\fS_{(\la_1,\ldots,\la_n)}$ of the symmetric group $\fS_r$.
For a finite subset $X\han\affSr$, let $$T_X=\sum_{x\in
X}T_x\in\afHr\;\;\text{ and }\;\; x_\la=T_{\fS_\la}.$$
The following endomorphism algebras over $\sZ$ or $\mbq(\up)$
$$\sS_\vtg(n,r):=\End_{\afHr}\biggl
(\bop_{\la\in\La_\vtg(n,r)}x_\la\afHr\biggr)\,\text{ and }\,\afbfSr:=\End_{\afbfHr}\biggl
(\bop_{\la\in\La_\vtg(n,r)}x_\la\afbfHr\biggr)$$
are called {\it affine quantum Schur algebras} or, more specifically, {\it affine $\up$-Schur algebras} (cf. \cite{GV,Gr99,Lu99}).
Note that $\afbfSr\cong\afSr\ot_\sZ\mbq(v)$.

We shall also use the following standard notation for the minimal coset representatives associated with the Young subgroups.
For $\la\in\afLanr$, denote the set of shortest representatives of right cosets of $\fS_\la$ in $\affSr$ by
$$\afmsD_\la=\{d\in\affSr\mid \ell(wd)=\ell(w)+\ell(d)\text{ for
$w\in\fS_\la$}\}.$$
Note that elements in $\afmsD_\la$ can be characterised as follows:
\begin{equation}\label{minimal coset representative}
\aligned
d^{-1}\in\afmsD_\la
&\iff d(\la_{0,i-1}+1)<d(\la_{0,i-1}+2)<\cdots<d(\la_{0,i-1}+\la_i),\,\forall 1\leq i\leq n,\endaligned
\end{equation}
where $\la_{0,i-1}:=\sum_{1\leq t\leq i-1}\la_t$.

We next recall the tensor space realization of the affine quantum Schur algebra.
Let $\Og$ be the free $\sZ$-module with basis $\{\og_i\mid
i\in\mbz\}$. For
$\bfi=(i_1,\ldots,i_r)\in\mbz^r$, write
$\og_\bfi=\og_{i_1}\ot\og_{i_2}\ot\cdots\ot \og_{i_r}=
\og_{i_1}\og_{i_2}\cdots \og_{i_r}\in\Og^{\ot r}.$
According to \cite{VV99},
$\Og^{\ot r}$
admits a right $\afHr$-module structure defined by
\begin{equation*}\label{afH action}
\begin{cases}
\og_{\bf i}\cdot X_t^{-1}
=\og_{i_1}\cdots\og_{i_{t-1}}\og_{i_t+n}\og_{i_{t+1}}\cdots\og_{i_r},\qquad \text{ for all }\bfi\in \mbz^r;\\
{\og_{\bf i}\cdot T_k=\left\{\begin{array}{ll} \up^2\og_{\bf
i},\;\;&\text{if $i_k=i_{k+1}$;}\\
v\og_{\bfi s_k},\;\;&\text{if $i_k<i_{k+1}$;}\qquad\text{ for all }\bfi\in I(n,r),\\
v\og_{\bfi s_k}+(\up^2-1)\og_{\bf i},\;\;&\text{if
$i_{k+1}<i_k$,}
\end{array}\right.}
\end{cases}
\end{equation*}
where $1\leq k\leq r-1$ and $1\le t\le r$.

Let $\mpk$ be a commutative ring containing an invertible element $\vep$.  We will regard $\mpk$ as a  $\sZ$-module by specializing
$\up$ to $\vep$. We define $\afHrk=\afHr\ot_\sZ \mpk$, and call it the extended affine Hecke algebra of type $A$ with Hecke parameter $\vep$. Furthermore let
$\afSrk=\afSr\ot_\sZ \mpk$,  and $\Ogk=\Og\ot_\sZ\mpk$.
 Then
the right action of $\afHr$ on $\Og^{\ot r}$ induces to a right
action of $\afHrk$ on $\Ogk^{\ot r}$.
According to \cite[Lem. 8.3]{VV99} (cf. \cite[Prop. 3.2.8 and 3.3.1]{DDF}), we have
\begin{equation}\label{T decom}
\Ogk^{\ot r}\cong
\bigoplus_{\lambda\in\afLanr}x_\lambda\afHrk.
\end{equation}
Hence we have
$$\afSr\cong\End_{\afHr}(\Og^{\ot r}),\quad \afSrk\cong\End_{\afHrk}(\Ogk^{\ot r}).$$  We will identify $\afSr$ and $\afSrk$ with $\End_{\afHr}(\Og^{\ot r})$
and $\End_{\afHrk}(\Ogk^{\ot r})$, respectively.
The two actions of $\afSrk$ and $\afHrk$ on $\Ogk^{\otimes r}$ commute. Hence the $\afSrk$-$\afHrk$-bimodule structure induces the natural $\mpk$-algebra homomorphism
\begin{equation}\label{xir}
\aligned
\xr :&\ \afHrk\lra\End_{\afSrk}(\Ogk^{\otimes r})^\oop.
\endaligned
\end{equation}
The injectivity of this homomorphism will be established in the next section. The remaining double-centralizer property will be obtained after studying the height-one local structure of the extended affine Hecke algebra.
\section{The faithfulness of the Hecke action}

In this section we prove that the natural homomorphism
$\xr$ from $\afHrk$ to $\End_{\afSrk} \Bigl( \bigoplus_{\lambda \in \afLanr} x_\lambda \afHrk \Bigr)^\oop$
is always injective by using the length function of the extended affine symmetric group. Throughout this section, $\mpk$ is a commutative ring containing an invertible element $\vep$. $\afHrk$ is the extended affine Hecke algebra associated to $\affSr$ over $\mpk$.

\subsection{Length estimates on the extended affine symmetric group}
We first establish several properties of the length function on the
extended affine symmetric group. These estimates will be used to
control the highest-length terms appearing in the proof of injectivity.

We begin with the basic multiplication formula when the length decreases.
\begin{lemma}\label{Lem1}
Let $u, w \in \affSr$. Suppose $\ell(u) + \ell(w) > \ell(uw)$. Then
$
T_u T_w$ is a $\mpk$-linear combination of  $ T_{u'}$ for $u'\in\affSr$ with $l(u') < l(u) + l(w)$.
\end{lemma}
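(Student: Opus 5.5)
We argue by induction on $\ell(u)$, reducing first to the case $u\in W_r$. Write $u=\rho^m u_0$ with $u_0\in W_r$. Then $T_u=T_{\rho^m}T_{u_0}$, $\ell(u)=\ell(u_0)$, and $T_{\rho^m}T_{u'}=T_{\rho^m u'}$ with $\ell(\rho^m u')=\ell(u')$. Since $\ell$ is invariant under left multiplication by powers of $\rho$, we also have $\ell(uw)=\ell(u_0w)$. So the hypothesis and the conclusion for $(u,w)$ are equivalent to those for $(u_0,w)$. From now on we assume $u\in W_r$ and induct on $\ell(u)$. The case $\ell(u)=0$ cannot occur: then $u=1$ and $\ell(u)+\ell(w)=\ell(uw)$. (The case $u=\rho^m$ is already covered by the reduction.)

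For $\ell(u)\ge1$, choose $s\in S$ with $u=su_1$ and $\ell(u_1)=\ell(u)-1$. Then $T_u=T_sT_{u_1}$, so $T_uT_w=T_s(T_{u_1}T_w)$. We split into two cases.

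\emph{Case 1: $\ell(u_1)+\ell(w)>\ell(u_1w)$.} By induction, $T_{u_1}T_w$ is a $\mpk$-combination of $T_{u''}$ with $\ell(u'')<\ell(u_1)+\ell(w)=\ell(u)+\ell(w)-1$. By the multiplication rule, each product $T_sT_{u''}$ is a combination of $T_{u''}$ and $T_{su''}$. Both have length at most $\ell(u'')+1<\ell(u)+\ell(w)$, which gives the claim.

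\emph{Case 2: $\ell(u_1w)=\ell(u_1)+\ell(w)$.} Then $T_{u_1}T_w=T_{u_1w}$. This uses the standard fact that $T_xT_y=T_{xy}$ when lengths add. That fact follows from the defining rules by writing $x=\rho^k s_{j_1}\cdots s_{j_p}$ reduced and applying the rule $T_sT_w=T_{sw}$ repeatedly, each step increasing length by one; the lengths increase at each step because $\ell(x)+\ell(y)=\ell(xy)$ forces every partial product to be length-additive. Hence $T_uT_w=T_sT_{u_1w}$. Now
\[\ell(u_1w)=\ell(u)+\ell(w)-1\ge\ell(uw)=\ell(su_1w),\]
so $\ell(su_1w)=\ell(u_1w)-1$, as lengths of $x$ and $sx$ differ by exactly one. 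The first multiplication rule then gives $T_sT_{u_1w}=(\vep^2-1)T_{u_1w}+\vep^2T_{su_1w}$. Both terms have length at most $\ell(u)+\ell(w)-1$, which finishes the induction.

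The only point needing care is the fact $\ell(sx)=\ell(x)\pm1$ for all $x\in\affSr$, together with the length-additivity product formula. Both reduce to $W_r$ via $\ell(\rho^mw)=\ell(w)$ and $\rho S\rho^{-1}=S$; the latter gives $\ell(s\rho^m x_0)=\ell(\rho^m s'x_0)=\ell(s'x_0)$ with $s'=\rho^{-m}s\rho^m\in S$. Everything else is routine bookkeeping with lengths.
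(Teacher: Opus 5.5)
Your proof is correct and is essentially the paper's argument: induction on $\ell(u)$, writing $u=su_1$ and splitting according to whether $\ell(u_1w)=\ell(u_1)+\ell(w)$ (then $T_uT_w=T_sT_{u_1w}$ with $\ell(su_1w)<\ell(u_1w)$) or not (then apply the induction hypothesis and use that left multiplication by $T_s$ raises lengths by at most one). The differences are cosmetic: you strip off $\rho^m$ at the start instead of treating $u=\rho^a s_i$ as the base case, and you make explicit the facts $T_xT_y=T_{xy}$ for length-additive pairs and $\ell(sx)=\ell(x)\pm1$, which the paper uses tacitly.
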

\begin{proof}
We proceed by induction on $\ell(u)$.
If $\ell(u)=1$ then $u =\rho^a s_i$ for some $1\leq i\leq r$ and $a\in\mbz$. By definition we have
\[
T_{u} T_w = (\vep^{2}-1) T_{\rho^a w} + \vep^2 T_{\rho^a s_i w}.
\]
Now assume $\ell(u) > 1$. Write $u = s_i x$ with $\ell(u) = \ell(x) + 1$ and suppose $\ell(uw) < \ell(u) + \ell(w)$.

 {Case 1:} $\ell(xw) =\ell(x) +\ell(w)$. Then
\[
\ell(uw) = \ell(s_i x w)<\ell(s_ix)+\ell(w)= 1 + \ell(xw).
\]
It follows that
\[
T_u T_w = T_{s_i} T_{x w} = (\vep^{2}-1) T_{x w} + \vep^2 T_{s_i x w}
= (\vep^2-1) T_{x w} + \vep^2 T_{u w}.
\]
Hence we have $T_u T_w\in\spann\{T_{u'}\mid u'\in\affSr,\, \ell(u') < \ell(u) + \ell(w)\}$.

 {Case 2:} $\ell(xw) <\ell(x) +\ell(w)$. Then by the induction hypothesis
\[
T_x T_w =\sum_{u'\in\affSr\atop\ell(u') < \ell(x) + \ell(w)} a_{u'} T_{u'},
\]
where $a_{u'}\in\mpk$.
It follows that
\[
T_u T_w = T_{s_i} T_x T_w = \sum_{u'\in\affSr\atop\ell(u') < \ell(x) + \ell(w)} a_{u'} T_{s_i} T_{u'}\in\spann_{\mpk}\{T_{u''}\mid u''\in\affSr,\,\ell(u'') < \ell(u) + \ell(w)\}.
\]
The proof is completed.
\end{proof}

The preceding lemma describes the terms produced by a length decrease.
We next construct elements which increase the length additively.

\begin{lemma}\label{Lem2}
Let $w \in \affSr$ and suppose
\[
w^{-1}(i_1) < w^{-1}(i_2) < \cdots < w^{-1}(i_r),
\]
where $\{i_1, i_2, \dots, i_r\} = \{1, 2, \dots, r\}$.
Define $u\in\affSr$ by $u(k)=i_k$ for $1\leq k\leq r$.
Assume $x \in\fSr$ is such that $xu$ is the longest element in $\fSr$ and $\ell(xu)=\ell(x)+\ell(u)$. Then
\[
w^{-1} x^{-1}(1) > w^{-1} x^{-1}(2) > \cdots > w^{-1} x^{-1}(r).
\] and $
\ell(x w) = \ell(x) + \ell(w) $.
\end{lemma}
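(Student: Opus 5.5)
The plan is to get the first assertion by direct bookkeeping with $u$ and the longest element, and the length identity by factoring $w^{-1}x^{-1}$ through a minimal coset representative of $\fSr$ in $\affSr$, using the characterisation \eqref{minimal coset representative}.

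\emph{First assertion.} Write $w_0$ for the longest element of $\fSr$, so $w_0(k)=r+1-k$ for $1\le k\le r$. Since $\{i_1,\ldots,i_r\}=\{1,\ldots,r\}$, the element $u$ is in $\fSr$. The hypothesis $xu=w_0$ gives $x(i_k)=r+1-k$, that is, $x^{-1}(j)=i_{r+1-j}$ for $1\le j\le r$. Hence
\[
w^{-1}x^{-1}(j)=w^{-1}(i_{r+1-j}).
\]
By the assumption $w^{-1}(i_1)<\cdots<w^{-1}(i_r)$, this is strictly decreasing in $j$, which is the first claim. This step is pure bookkeeping.

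\emph{Length identity.} Put $y=w^{-1}x^{-1}$. Since $y$ is strictly decreasing on $\{1,\ldots,r\}$, the element $v:=yw_0$ satisfies $v(1)<v(2)<\cdots<v(r)$. Take $\la=(r,0,\ldots,0)\in\afLanr$, so that $\fS_\la=\fSr$. By \eqref{minimal coset representative} applied to this $\la$, we get $v^{-1}\in\afmsD_\la$. Then $\ell(v^{-1})$ and $\ell(v)$ agree, and the standard additivity for minimal coset representatives (which is the defining property of $\afmsD_\la$, inverted) gives
\[
\ell(y)=\ell(vw_0)=\ell(v)+\ell(w_0).
\]
We also have $w_0=xu$ with $\ell(w_0)=\ell(x)+\ell(u)$. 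Therefore
\[
\ell(xw)=\ell(y^{-1})=\ell(y)=\ell(v)+\ell(x)+\ell(u).
\]

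Next, express $w$ through $v$. From $w^{-1}=yx=vw_0x$ and $w_0x=xux$, compute $w_0x$ directly using $x=w_0u^{-1}$: this gives $w_0x=w_0w_0u^{-1}=u^{-1}$. Hence $w^{-1}=vu^{-1}$, and subadditivity of $\ell$ on $\affSr$ gives
\[
\ell(w)=\ell(w^{-1})\le \ell(v)+\ell(u).
\]
Combining this with the previous display,
\[
\ell(xw)\ge \ell(x)+\ell(w).
\]
The reverse inequality $\ell(xw)\le\ell(x)+\ell(w)$ is the general subadditivity of $\ell$. So equality holds.

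The only point needing care is the additivity $\ell(vw_0)=\ell(v)+\ell(w_0)$ for $v$ increasing on $\{1,\ldots,r\}$ in the extended group. Because $\ell(\rho^m w')=\ell(w')$ and $\rho$ preserves the increasing condition, this reduces to $W_r$. There it is the usual parabolic fact that $\{1,\ldots,r\}$-increasing elements are the minimal length representatives of $\affSr/\fSr$, which is exactly what \eqref{minimal coset representative} encodes. I expect no other obstacle.
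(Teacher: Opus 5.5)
Your proof is correct, and it takes a genuinely different route from the paper's.

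The paper fixes a reduced expression $x=s_{j_1}\cdots s_{j_m}$ and checks, letter by letter, that each $s_{j_t}$ increases the length of $s_{j_{t+1}}\cdots s_{j_m}w$. It does this with the descent criterion $\ell(s_jg)>\ell(g)\iff g^{-1}(j)<g^{-1}(j+1)$, quoted from Shi. Each ascent is transported from $s_{j_{t+1}}\cdots s_{j_m}u$ to $s_{j_{t+1}}\cdots s_{j_m}w$, using that $u^{-1}$ and $w^{-1}$ induce the same relative order on $\{1,\ldots,r\}$.

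You instead isolate the distinguished representative $v$ of the coset $w^{-1}\fSr$; indeed $v(k)=w^{-1}(i_k)$. This gives $w^{-1}=vu^{-1}$ and $(xw)^{-1}=vw_0$. One application of the characterisation \eqref{minimal coset representative}, together with $\ell(w_0)=\ell(x)+\ell(u)$, then squeezes out the equality. The squeeze also uses inverse-invariance and subadditivity of $\ell$ on $\affSr$, both of which hold because conjugation by $\rho$ preserves length on $W_r$.

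What your route buys:
\begin{itemize}
\item It is shorter and avoids reduced words and the external descent criterion.
\item It relies only on a tool the paper already uses in Lemma~\ref{Lem3}.
\item It makes the structure transparent: $w=u\cdot v^{-1}$ and $xw=w_0\cdot v^{-1}$ are both length-additive parabolic factorizations. You even recover $\ell(w)=\ell(u)+\ell(v)$ as a by-product.
\end{itemize}

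What the paper's route buys: it needs only the descent criterion rather than the parabolic coset characterization, and it verifies additivity directly along an arbitrary reduced word of $x$.

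Incidentally, the hypothesis $\ell(xu)=\ell(x)+\ell(u)$ is automatic in $\fSr$ once $xu=w_0$.
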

\begin{proof}
Since $xu$ is the longest element in $\fSr$ we have $x^{-1}(k)=i_{r+1-k}$ for $1\leq k\leq r$. Hence we have
$$w^{-1}x^{-1}(k)=w^{-1}(i_{r+1-k})>w^{-1}(i_{r-k})=w^{-1}x^{-1}(k+1)$$
for $1\leq k<r$.
Let $x = s_{j_1}s_{j_2} \cdots s_{j_m}$ be a reduced expression with $1 \le j_1, j_2, \dots, j_m < r$. Then we have $j_m = i_k$ and $j_m+1 = i_{l}$ for some $1\leq k,l\leq r$.
By \cite[Cor.~4.2.3]{Shi86}, the inequality  $\ell(s_{j_m} u) > \ell(u)$ implies $k=u^{-1}(j_m) < u^{-1}(j_m+1)=l$.
It follows that
$
w^{-1}(i_k) < w^{-1}(i_{l}).$
Hence
$$\ell(s_{j_m} w) > \ell(w).$$
There exist $1\leq k_t,l_t\leq r$ such that  $s_{j_m}\cdots s_{j_{t+1}}(j_t)=i_{k_t}$ and  $s_{j_m}\cdots s_{j_{t+1}}(j_t+1)=i_{l_t}$.
Since $\ell(xu)=\ell(x)+\ell(u)$, we have
\[
\ell(s_{j_t}s_{j_{t+1}} \cdots s_{j_m} u) > \ell(s_{j_{t+1}} \cdots s_{j_m} u).
\]
It follows that
\[
k_t=u^{-1}(i_{k_t})=(s_{j_{t+1}} \cdots s_{j_m} u)^{-1}(j_t) <(s_{j_{t+1}} \cdots s_{j_m} u)^{-1}(j_t+1)=u^{-1}(i_{l_t})={l_t}.
\]
Therefore,
\[
w^{-1}s_{j_m}\cdots s_{j_{t+1}}(j_t)=w^{-1}(i_{{k_t}})  < w^{-1}(i_{l_t})=w^{-1}
s_{j_m}\cdots s_{j_{t+1}}(j_t+1).
\]
Thus,  for $1 \le t < m$,
\[
\ell(s_{j_t} s_{j_{t+1}} \cdots s_{j_m} w) > \ell(s_{j_{t+1}} \cdots s_{j_m} w).
\]
Consequently,  we have $
\ell(xw) = \ell(x) + \ell(w).$
\end{proof}

By \cite[(3.2.1.1)]{DDF} we have
\begin{equation}\label{length}
 \ell(w)=|\{(i,j)\in\mbz^2\mid 1\leq i\leq r,\,i<j,\,w(i)>w(j)\}|
\end{equation}
for $ w \in \affSr$. We first consider a special class of affine permutations whose action
on the first $r$ positions is decreasing.
\begin{lemma}\label{Lem3}
Let $ w \in \affSr$ such that
$
w^{-1}(1) > w^{-1}(2) > \cdots > w^{-1}(r).
$ Let $d\in\affSr$ be defined by $d(k)=r+1-k+(k-r)r$ for $1\leq k\leq r$.
Then
\[
\ell(w_{0,r} d w) = \ell(w_{0,r}) + \ell(d) + \ell(w),
\]
where $w_{0,r}$ is the longest element of the symmetric group $\fSr$.
\end{lemma}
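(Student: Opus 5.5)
The plan is to absorb $\wo$ and $d$ into a single affine permutation $y:=\wo d$ and to prove two additivity statements:
\[
\ell(y)=\ell(\wo)+\ell(d),\qquad \ell(yw)=\ell(y)+\ell(w).
\]
Both will come from the inversion description \eqref{length} of the length function. Adding them gives $\ell(\wo dw)=\ell(\wo)+\ell(d)+\ell(w)$.

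\emph{Step 0: an inversion criterion.} From \eqref{length}, $\ell(u)$ counts the inversion classes of $u$. These are the pairs $(i,j)$ with $i<j$ and $u(i)>u(j)$, taken modulo the simultaneous shift $(i,j)\mapsto(i+r,j+r)$. For $y,w\in\affSr$ I would prove the following. If $y(w(i))>y(w(j))$ for every inversion $(i,j)$ of $w$, then $\ell(yw)=\ell(y)+\ell(w)$.

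To see this, note that under the hypothesis every inversion of $w$ is also an inversion of $yw$. The remaining inversions of $yw$ are the pairs $(i,j)$ with $i<j$ and $w(i)<w(j)$ but $y(w(i))>y(w(j))$. The map $(i,j)\mapsto(w(i),w(j))$ is a shift-compatible bijection from these pairs onto the inversions $(a,b)$ of $y$ for which $(w^{-1}(a),w^{-1}(b))$ is not an inversion of $w$. Under our hypothesis every inversion of $y$ has this property, so these pairs are counted exactly by $\ell(y)$.

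\emph{Step 1: $\ell(\wo d)=\ell(\wo)+\ell(d)$.} Writing $d(k)=(r+1-k)-(r-k)r$ and using $d(k+mr)=d(k)+mr$, I get $d^{-1}(j)=(r+1-j)+(j-1)r$ for $1\le j\le r$. This is strictly increasing in $j$. Hence $d^{-1}(1)<\cdots<d^{-1}(r)$, so no pair $(a,b)$ of values with $1\le a<b\le r$ is inverted by $d^{-1}$. The inversions of $\wo$ are exactly the pairs $a<b$ in $\{1,\dots,r\}$, up to shift. So Step 0, applied with $\wo$ acting on the left of $d$ (equivalently to the inverses), gives additivity. Alternatively, one can argue exactly as in Lemma \ref{Lem2}, using $d^{-1}(s_{j_t}\cdots)$ increasing along a reduced word of $\wo$.

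\emph{Step 2: $\ell(yw)=\ell(y)+\ell(w)$.} Since $\wo(j+mr)=r+1-j+mr$, a direct computation gives $y(k)=\wo d(k)=(r+1)k-r^2$ for $1\le k\le r$. Hence
\[
y(k+mr)=(r+1)k-r^2+mr .
\]
Take an inversion $(i,j)$ of $w$. Write $w(i)=a=k+mr$ and $w(j)=b=k'+m'r$ with $1\le k,k'\le r$, so that $a>b$. Then
\[
y(a)-y(b)=(a-b)+r(k-k').
\]
This is positive whenever $k\ge k'$.

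Now suppose $k<k'$. Then $a>b$ forces $m>m'$. On the other hand, $i<j$ means $w^{-1}(k)+mr<w^{-1}(k')+m'r$. The hypothesis $w^{-1}(k)>w^{-1}(k')$ then gives $(m-m')r<0$, which contradicts $m>m'$. So this case never occurs. Step 0 then gives the claim, and combining with Step 1 finishes the proof.

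The main obstacle I expect is Step 0: the inversion-set bookkeeping in the affine setting, where inversions are counted only up to the $r$-periodic shift. One must check that $w$ induces a well-defined bijection on shift classes; it does, because $w(i+r)=w(i)+r$.
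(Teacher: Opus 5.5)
Your proof is correct, but it takes a different route from the paper.

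\textbf{The paper's route.} The paper groups the product as $w_{0,r}\cdot(dw)$.
\begin{itemize}
\item It checks that $(dw)^{-1}$ is increasing on $\{1,\dots,r\}$. It then invokes the coset-representative characterization \eqref{minimal coset representative} to get $\ell(w_{0,r}dw)=\ell(w_{0,r})+\ell(dw)$.
\item It proves $\ell(dw)=\ell(d)+\ell(w)$ by hand. It explicitly enumerates the inversion set $B$ of $d^{-1}$ and shows $B$ lies in the inversion set $A$ of $w^{-1}d^{-1}$. It then builds a bijection from $A\setminus B$ onto a subset $C'$ of the inversions of $w^{-1}$. Finally it shows $C'=C$, using the fact that the inversion set of $d$ coincides with $B$.
\end{itemize}

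\textbf{Your route.} You group as $(w_{0,r}d)\cdot w$ and isolate once the general criterion that order preservation on the inversions of $w$ gives additivity. You then use the closed form $y(k+mr)=(r+1)k-r^2+mr$ for $y=w_{0,r}d$. With this, the check that $y$ preserves the order of every pair inverted by $w$ becomes a two-line residue argument. The hypothesis on $w^{-1}$ forces, for every inversion $(i,j)$ of $w$, the residue of $w(i)$ in $\{1,\dots,r\}$ to be at least that of $w(j)$.

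\textbf{Comparison.} Your approach avoids computing the inversion set of $d$ and makes transparent why this particular $d$ works. The paper's approach stays closer to the coset-representative machinery it uses elsewhere. It also yields $\ell(dw)=\ell(d)+\ell(w)$ directly; in your approach this follows afterwards by subadditivity, as in the proof of Lemma~\ref{Lem4}.

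\textbf{Two small presentation points.}
\begin{itemize}
\item In Step 0, the phrase ``$(w^{-1}(a),w^{-1}(b))$ is not an inversion of $w$'' should be stated as $w^{-1}(a)<w^{-1}(b)$. An ordered pair with $a<b$ is never an inversion in your ordered sense.
\item In Step 1 you tacitly use $\ell(u^{-1})=\ell(u)$. Alternatively, Step 0 applies directly with $y=w_{0,r}$ and $w=d$. Every inversion of $d$ joins two different blocks $\{1+mr,\dots,r+mr\}$, because $d^{-1}$ is increasing on $\{1,\dots,r\}$. Between different blocks, $w_{0,r}$ preserves order.
\end{itemize}
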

\begin{proof}
Since
$d^{-1}(k) = (r - k + 1) + (k - 1)r$ for $1 \leq k \leq r$
we have $$w^{-1}d^{-1}(k) = w^{-1}(r - k + 1) + (k - 1)r$$ for $1 \leq k \leq r.$
Hence, since  $w^{-1}(1) > w^{-1}(2) > \cdots > w^{-1}(r)$,
we have
\[
w^{-1}d^{-1}(1) < w^{-1}d^{-1}(2) < w^{-1}d^{-1}(3) < \cdots < w^{-1}d^{-1}(r-1) < w^{-1}d^{-1}(r).
\]
Hence by \eqref{minimal coset representative} we have
$
 \ell(w_{0,r} dw) = \ell(w_{0,r}) + \ell(dw).$
It remains to show that
$$\ell(dw)=\ell(d)+\ell(w).$$
Let
\[
\begin{split}
A&=\{(i,j)\in\mbz^2\mid 1\leq i\leq r,\,i<j,\,w^{-1}d^{-1}(i)>w^{-1}d^{-1}(j)\},\\
B&=\{(i,j)\in\mbz^2\mid 1\leq i\leq r,\,i<j,\,d^{-1}(i)>d^{-1}(j)\}.
\end{split}
\]
By \eqref{length} we have
\begin{equation}\label{dw}
\ell(dw)-\ell(d)=|A|-|B| .
\end{equation}
By the definition of $d$ we have
\begin{equation}\label{B}
 B =\big\{ (b, sr + k) \,\big|\,  1 \leq s \leq r - 2,\  k
 \geq 1, \  k + s + 1 \leq b \leq r \big\}.
 \end{equation}
Let $1 \leq s \leq r-2$, $k \geq 1$, $k+s+1 \leq b \leq r$. Then
\[
d^{-1}(b)  = -b+1 + br,\quad
d^{-1}(sr+k)   = 1-k + (s+k)r.
\]
Hence
\[
d^{-1}(b) - d^{-1}(sr+k) = (b-s-k)r - (b-k) \geq r - (b-k) \geq k \geq 1
\]
and
\[
w^{-1}d^{-1}(b) = w^{-1}(r-b+1) + (b-1)r
\]
\[
w^{-1}d^{-1}(sr+k) = w^{-1}(r+1-k) + (s+k-1)r.
\]
Since $k \geq 1$ and $k+s+1 \leq b \leq r$, we have
$1 \leq r-b+1,\ r-k+1 \leq r$. Furthermore
since $b \geq k+s+1 > k$, we have $r-b+1 < r+1-k$. Therefore
\[
w^{-1}(r-b+1) > w^{-1}(r-k+1).
\]
Because $b \geq k+s+1$, we get $b-1 \geq s+k > s+k-1$, hence
$
(b-1)r > (s+k-1)r.$
Therefore we have
\[
w^{-1}d^{-1}(b) = w^{-1}(r-b+1) + (b-1)r   > w^{-1}(r-k+1) + (s+k-1)r = w^{-1}d^{-1}(sr+k).
\]
Therefore we have
$B\subseteq A$.
It follows that
\[
A \backslash B = \{ (i,j)\in\mbz^2 \mid 1 \leq i \leq r,\ i<j,\ d^{-1}(i) < d^{-1}(j),\
w^{-1}d^{-1}(i) > w^{-1}d^{-1}(j) \}.
\]
Let
\[
\begin{split}
C &= \{ (k,l)\in\mbz^2 \mid 1 \leq k \leq r,\ k < l,\  w^{-1}(k) > w^{-1}(l)\},\\
C' &= \{ (k,l)\in\mbz^2  \mid 1 \leq k \leq r,\ k < l,\ d(k) < d(l),\ w^{-1}(k) > w^{-1}(l)\} \end{split}
\]
Now consider the map
\[
\begin{split}
f:&A \backslash B \xrightarrow{f} C'  \\
&(i,j) \mapsto (k,l)
\end{split}
\]
where  $d^{-1}(i) = k + sr$ for some $1 \leq k \leq r$ and $s \in \mathbb{Z}$, and
$l = d^{-1}(j) - sr$.
It is easy to see that the map $f$ is bijective.
Therefore by \eqref{dw} we have
\begin{equation}\label{dw'}
 \ell(dw)-\ell(d)=|A|-|B|=|C'|.
\end{equation}
Observe that  $C'\han C$ and
\[
C \backslash C' = \{ (k,l)\in\mbz^2 \mid 1 \leq k \leq r,\ k < l,\ d(k) > d(l),\ w^{-1}(k) > w^{-1}(l) \}.
\]
Let $$X=\{ (k,l)\in\mbz^2 \mid 1 \leq k \leq r,\ k < l,\ d(k) > d(l)\}.$$ Then by the definition of $d$ and \eqref{B} we have
$$X=\big\{ (b, sr + k) \,\big|\,  1 \leq s \leq r - 2,\  k
 \geq 1, \  k + s + 1 \leq b \leq r \big\}=B.$$
 Let $1 \leq s \leq r-2$, $k \geq 1$, $k+s+1 \leq b \leq r$.
Since $1 \leq k < k+s+1 \leq b \leq r$, we have
$
w^{-1}(k) > w^{-1}(b).$
Hence
\[
w^{-1}(b) - w^{-1}(sr + k) = w^{-1}(b) - w^{-1}(k) - sr<0.
\]
This implies the equality of the following sets:
\[
\begin{aligned}
X
&= \{(k,\ell)\in\mbz^2 \mid 1 \leq k \leq r,\ k < \ell,\ d(k) > d(\ell),\ w^{-1}(k) < w^{-1}(\ell)\}.
\end{aligned}
\]
Therefore, we conclude:
\[
C \setminus C' = \varnothing.
\]
Hence $C=C'$. Consequently by \eqref{dw'} we have $\ell(dw)-\ell(d)=|C|=\ell(w)$. The proof is completed.
\end{proof}
Combining the previous construction with the preceding reduction,
we obtain the following length-additivity result for arbitrary affine
permutations.
\begin{lemma}\label{Lem4}
Let $ w \in \affSr $. Then there exists $ d \in \affSr $ such that
\[
\ell(\wo d w) = \ell(\wo) + \ell(d) + \ell(w).
\]
\end{lemma}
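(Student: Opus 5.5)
The plan is to reduce to Lemma~\ref{Lem3} by first left-multiplying $w$ by a suitable element of $\fSr$ that makes the first $r$ values of $w^{-1}$ decreasing, using Lemma~\ref{Lem2} to control lengths.

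\emph{Step 1 (sorting the first $r$ values).} Given $w\in\affSr$, the integers $w^{-1}(1),\dots,w^{-1}(r)$ are distinct. Order them increasingly: choose $\{i_1,\dots,i_r\}=\{1,\dots,r\}$ with $w^{-1}(i_1)<\cdots<w^{-1}(i_r)$. Define $u$ by $u(k)=i_k$, extended $r$-periodically, so $u\in\fSr$. Take $x=\wo u^{-1}\in\fSr$. Then $xu=\wo$, and since $\wo$ is the longest element of $\fSr$ we have $\ell(\wo)=\ell(\wo u^{-1})+\ell(u)$, that is, $\ell(xu)=\ell(x)+\ell(u)$. Lemma~\ref{Lem2} now gives two facts:
\begin{itemize}
\item $w'=xw$ satisfies $w'^{-1}(1)>\cdots>w'^{-1}(r)$;
\item $\ell(xw)=\ell(x)+\ell(w)$.
\end{itemize}

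\emph{Step 2 (applying Lemma~\ref{Lem3}).} Apply Lemma~\ref{Lem3} to $w'$, with $d_0$ the element defined there. This gives
\[
\ell(\wo d_0 x w)=\ell(\wo)+\ell(d_0)+\ell(xw)=\ell(\wo)+\ell(d_0)+\ell(x)+\ell(w).
\]
Set $d=d_0x\in\affSr$. By subadditivity of length, $\ell(d)\le\ell(d_0)+\ell(x)$, and likewise
\[
\ell(\wo d w)\le\ell(\wo)+\ell(d)+\ell(w)\le\ell(\wo)+\ell(d_0)+\ell(x)+\ell(w).
\]
The left end equals the right end, so every inequality in this chain is an equality. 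In particular $\ell(\wo dw)=\ell(\wo)+\ell(d)+\ell(w)$, which is the claim.

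\emph{Main obstacle.} The only delicate point is verifying the hypotheses of Lemma~\ref{Lem2}, namely that $x=\wo u^{-1}$ satisfies $\ell(xu)=\ell(x)+\ell(u)$. This follows from the standard fact that $\ell(\wo y)=\ell(\wo)-\ell(y)$ for all $y\in\fSr$, together with the fact that the extended length function restricts on $\fSr$ to the usual Coxeter length. Subadditivity of $\ell$ on $\affSr$ holds because $\rho$ normalizes $W_r$ and preserves length, and it is needed to close the equality chain in Step 2.
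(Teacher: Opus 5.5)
Your proposal is correct and follows the paper's proof essentially verbatim. You use Lemma~\ref{Lem2} to get $x\in\fSr$ with $xw$ decreasing on $1,\dots,r$ and $\ell(xw)=\ell(x)+\ell(w)$, apply Lemma~\ref{Lem3} to $xw$, and take $d=d_0x$, letting subadditivity force the needed equalities; the paper phrases that last step as a contradiction argument showing $\ell(d_0x)=\ell(d_0)+\ell(x)$. Your explicit choice $x=\wo u^{-1}$, together with the check $\ell(xu)=\ell(x)+\ell(u)$, usefully makes explicit the existence of $x$, which the paper leaves implicit.
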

\begin{proof}
By Lemma \ref{Lem2}, there exists $ x \in \fSr $ such that
$
\ell(x w) = \ell(x) + \ell(w)
$
and
\[
(x w)^{-1}(1) > (x w)^{-1}(2) > \cdots > (x w)^{-1}(r).
\]
By Lemma \ref{Lem3}, there exists $ d \in \affSr $ such that
\[
\begin{aligned}
\ell(\wo d x w) &= \ell(\wo) + \ell(d) + \ell(x w) \\
&= \ell(\wo) + \ell(d) + \ell(x) + \ell(w).
\end{aligned}
\]
We claim that $ \ell(d x) = \ell(d) + \ell(x) $. Suppose, for contradiction, that $ \ell(d x) < \ell(d) + \ell(x) $. Then
\[
\begin{aligned}
\ell(\wo d x w) &\leq \ell(\wo) + \ell(d x) + \ell(w) \\
&< \ell(\wo) + \ell(d) + \ell(x) + \ell(w),
\end{aligned}
\]
which contradicts the previous equality. Hence $ \ell(d x) = \ell(d) + \ell(x) $, and we conclude
\[
\ell(\wo d x w) = \ell(\wo) + \ell(d x) + \ell(w).
\]
The proof is completed.
\end{proof}

\subsection{Faithfulness of the right action}
We now apply the preceding length estimates to prove the injectivity
of the natural homomorphism $\xr$.
\begin{theorem}\label{injection}
The algebra homomorphism $$\xr : \ \afHrk\lra\End_{\afSrk} \Bigl( \bigoplus_{\lambda \in \afLanr} x_\lambda \afHrk \Bigr)^\oop$$ defined in \eqref{xir} is injective.
\end{theorem}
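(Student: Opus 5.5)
Suppose $h=\sum_{w}a_wT_w\in\afHrk$ is nonzero and lies in the kernel of $\xr$, so that $x_\la h=0$ for every $\la\in\afLanr$. We will reach a contradiction by exhibiting, for a suitable $\la$, a nonzero coefficient in $x_\la h$.

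The natural choice is $\la=(r,0,\ldots,0)$, which is allowed for every $n\ge1$. Then $x_\la=\X=T_{\fSr}$, and this is the summand of the tensor space carrying the largest finite parabolic. By construction $\X h=0$ in $\afHrk$. Since $\afHrk$ is a free $\mpk$-module with basis $\{T_w\}$, it suffices to find some $u\in\affSr$ for which the coefficient of $T_u$ in $\X T_d h$ is nonzero. Here $d\in\affSr$ is a suitable element; note that $\X T_d h=0$ as well, because $\X\afHrk\cong\X\afHrk T_d^{-1}$, or more simply because $\X T_d h=0$ follows from right-module considerations. Passing to the left-multiplied form is the cleaner route: the kernel of $\xr$ is exactly the right annihilator of the tensor space, which is a two-sided ideal. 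Hence $h$ in the kernel implies $T_dh$ is in the kernel, and therefore $\X T_d h=0$.

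Choose $w_1$ with $a_{w_1}\ne0$ of maximal length among the support of $h$, and apply Lemma \ref{Lem4} to get $d$ with $\ell(\wo dw_1)=\ell(\wo)+\ell(d)+\ell(w_1)$. Expand
\[
\X T_dh=\sum_{y\in\fSr}\sum_w a_w T_yT_dT_w .
\]
For $\ell(w)<\ell(w_1)$, or for $y\ne\wo$, each term lies in the span of those $T_{u'}$ with $\ell(u')\le \ell(y)+\ell(d)+\ell(w)$. Lemma \ref{Lem1} only decreases length, so this bound is below $N:=\ell(\wo)+\ell(d)+\ell(w_1)$ unless $y=\wo$ and $\ell(w)=\ell(w_1)$.

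For the terms with $y=\wo$ and $\ell(w)=\ell(w_1)$, the product $T_{\wo}T_dT_w$ equals $T_{\wo dw}$ when lengths add. Otherwise Lemma \ref{Lem1} places it in lower length. The coefficient of $T_{\wo dw_1}$ in degree $N$ therefore collects contributions $a_w$ from those $w$ with $\ell(w)=\ell(w_1)$ and $\wo dw=\wo dw_1$ with lengths adding. That forces $w=w_1$, so the coefficient is exactly $a_{w_1}\ne0$.

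This contradicts $\X T_dh=0$, and so $\xr$ is injective. The one point needing care is the claim that the kernel is a two-sided ideal. This holds because $\xr$ is an algebra homomorphism into $\End(\cdot)^{\rm op}$; its kernel is $\{h : \Ogk^{\ot r}h=0\}$, which is stable under multiplication on both sides. Together with \eqref{T decom} this gives $\X\afHrk h=0$. The main obstacle, the length additivity needed for the top term, is already handled by Lemma \ref{Lem4}.
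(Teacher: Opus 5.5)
Your proof is correct and follows the paper's argument essentially verbatim. You take $\la=(r,0,\ldots,0)$, pick a maximal-length element $w_1$ of the support, use Lemma~\ref{Lem4} to get $d$ with $\ell(\wo d w_1)=\ell(\wo)+\ell(d)+\ell(w_1)$, and show via Lemma~\ref{Lem1} that the coefficient of $T_{\wo d w_1}$ in $\X T_d h$ is exactly $a_{w_1}\neq 0$.

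The only difference is in getting $\X T_d h=0$. The paper reads it off directly from $\X\afHrk h=0$, since $\X T_d\in\X\afHrk$. Your route through the kernel being a two-sided ideal is valid but unnecessary, and the remark about $\X\afHrk T_d^{-1}$ should be dropped.
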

\begin{proof}
Let $\sum_{w \in \affSr} a_w T_w \in \ker \xr$.
Then $\X \afHrk \sum_{w \in \affSr} a_w T_w = 0$. In particular, for any $w' \in \affSr$,
\begin{equation}\label{eq1 of Theorem}
\X T_{w'} \sum_{w \in \affSr} a_w T_w = 0.
\end{equation}
Let $I = \{ w \in \affSr \mid a_w \neq 0 \}$. If $\sum_{w \in \affSr} a_w T_w \neq 0$, then $I \neq \emptyset$. Let
\[
m = \max \{ \ell(w) \mid w \in I \},
\]
and choose $y \in \affSr$ such that $a_y \neq 0$ and $\ell(y) = m$. By Lemma \ref{Lem4}, there exists $u \in \affSr$ such that
\begin{equation}\label{eq2 of Theorem}
\ell(\wo u y) = \ell(\wo) + \ell(u) + \ell(y),
\end{equation}
where $\wo$ is the longest element of $\fSr$. By \eqref{eq1 of Theorem} we have
\begin{equation}\label{eq3 of Theorem}
0=\X T_u \sum_{w \in I} a_w T_w .
\end{equation}
Furthermore by Lemma \ref{Lem1} we have
\[
\begin{aligned}
\X T_u \sum_{w \in I} a_w T_w& =  \Bigl( T_{\wo} + \sum_{\substack{w' \in \fSr \\ \ell(w') < \ell(\wo)}} T_{w'} \Bigr) T_u \Bigl( \sum_{\substack{w \in I \\ \ell(w) = m}} a_w T_w + \sum_{\substack{w \in I \\ \ell(w) < m}} a_w T_w \Bigr)\\
&= T_{\wo} T_u \sum_{\substack{w \in I \\ \ell(w) = m}} a_w T_w + \sum_{\substack{u' \in \affSr \\ \ell(u') < \ell(\wo) + \ell(u) + m}} b_{u'} T_{u'}
\end{aligned}
\]
where $b_{u'}\in\mpk$.
Hence by \eqref{eq2 of Theorem} we have
\[
\begin{aligned}
\X T_u \sum_{w \in I} a_w T_w&= a_y T_{\wo u y} + \sum_{\substack{w \in I,\,w \neq y\\ \ell(w) = m }} a_w T_{\wo}T_uT_w + \sum_{\substack{u' \in \affSr \\ \ell(u') < \ell(\wo u y)}} b_{u'} T_{u'}\\
&=
a_y T_{w_{0,r}uy} + \sum_{\substack{w \in I,\,w \neq y\\ \ell(w) = m }} a_w T_{w_{0,r}u} T_w + \sum_{\substack{u' \in \affSr \\ \ell(u') < \ell(w_{0,r}uy)}}b_{u'} T_{u'}\\
&=a_y T_{w_{0,r}uy} +f_1+f_2 + \sum_{\substack{u' \in \affSr \\ \ell(u') < \ell(w_{0,r}uy)}} b_{u'} T_{u'}
\end{aligned}
\]
where
\begin{equation*}
\begin{split}
 f_1&= \sum_{\substack{w \in I,\, \ell(w) = m,\, w \neq y \\ \ell(w_{0,r}uw) = \ell(w_{0,r}u) + \ell(w)   = \ell(w_{0,r}uy)}} a_w T_{w_{0,r}uw}\\
 f_2& =\sum_{\substack{w \in I,\, \ell(w) = m,\, w \neq y \\ \ell(w_{0,r}uw) < \ell(w_{0,r}u) + \ell(w)}} a_w T_{w_{0,r}u} T_w.
\end{split}
\end{equation*}
By Lemma \ref{Lem1} we have
\[
\begin{aligned}
f_2
&= \sum_{\substack{u' \in \affSr \\ \ell(u') < \ell(w_{0,r}u) + m=\ell(w_{0,r}uy)}} c_{u'} T_{u'}
\end{aligned}
\]
where $c_{u'}\in\mpk$.
Thus by \eqref{eq3 of Theorem} we have

\[
\begin{aligned}
&0=\ a_y T_{w_{0,r}uy} + \sum_{\substack{w\in I,\, \ell(w) = m,\,w \neq y \\ \ell(w_{0,r}uw) = \ell(w_{0,r}uy)}} a_w T_{w_{0,r}uw} + \sum_{\substack{u' \in \affSr \\ \ell(u') < \ell(w_{0,r}uy)}} (b_{u'}+c_{u'})  T_{u'}  = a_y T_{w_{0,r}uy} + g
\end{aligned}
\]
where $g$ is a $\mpk$-linear combination of $T_{u'}$ for $ u' \in \affSr$ with  $u' \neq w_{0,r}uy$. Therefore $a_y = 0$. This is a contradiction.
Consequently, $\ker\xr=0$.
\end{proof}

\section{ An abstract double-centralizer criterion}
In this section we formulate the abstract double-centralizer argument for subsequent use. We first give a concrete criterion for a homomorphism $H\longrightarrow T^{\oplus a}$ to be a left $\add(T)$-approximation. We then recall the double-centralizer criterion stated in terms of two successive approximations. Finally, we derive a convenient reformulation of this criterion, reducing the problem to constructing an embedding of the cokernel into a finite direct sum of copies of $T$.

Let $R$ be a Noetherian domain, let $H$ be an $R$-algebra, and let $T$ be a
right $H$-module. Put
$
S=\End_H(T).$
For every right $H$-module $M$, set
\[
M^*=\Hom_H(M,T),\qquad
M^{**}=\Hom_S(M^*,T).
\]
There is a natural evaluation homomorphism
\[
\alpha_M:M\longrightarrow M^{**},
\qquad
\alpha_M(m)(\varphi)=\varphi(m).
\]
We say that $H$ has the double-centralizer property relative to $T$ if the natural evaluation homomorphism $\alpha_H$ is an isomorphism, i.e.,
\[
\alpha_H: H\xrightarrow{\sim}H^{**}.
\]

Let $\add(T)$ denote the full subcategory of right $H$-modules consisting of
direct summands of finite direct sums of copies of $T$.

A homomorphism of right $H$-modules
$
f:M\longrightarrow C$
with $C\in\add(T)$ is called a left $\add(T)$-approximation of $M$ if, for
every $D\in\add(T)$, the map
\[
\Hom_H(C,D)\longrightarrow\Hom_H(M,D),
\qquad
g\longmapsto g\circ f,
\]
is surjective.

\begin{lemma}\label{left approx}
Let
$f:H\rightarrow T^{\oplus a}$
be an injective homomorphism of right $H$-modules, and write
$f(1)=(t_1,\ldots,t_a).$
Then $f$ is a left $\add(T)$-approximation of $H$ if and only if
$T=\sum_{i=1}^a St_i.$
\end{lemma}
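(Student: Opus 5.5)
The plan is to reduce both sides of the claimed equivalence to statements about $\Hom_H(H,T)$ and $\Hom_H(T^{\oplus a},T)$. The basic identification is that $\Hom_H(H,D)\cong D$ via $g\mapsto g(1)$ for any right $H$-module $D$.

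First reduce to $D=T$. Every $D\in\add(T)$ is a direct summand of some $T^{\oplus b}$. Both $\Hom_H(C,-)$ and $\Hom_H(H,-)$ commute with finite direct sums. They are also compatible with the split inclusion $D\to T^{\oplus b}$ and the projection $T^{\oplus b}\to D$. Hence surjectivity of $g\mapsto g\circ f$ for $D=T$ implies surjectivity for $D=T^{\oplus b}$, and then for every summand $D$. In detail: given $h:H\to D$, compose with the inclusion, lift to $T^{\oplus b}$, and compose the lift with the projection. Conversely, the condition for $D=T$ is a special case. So $f$ is a left $\add(T)$-approximation if and only if
\[
\Hom_H(T^{\oplus a},T)\longrightarrow \Hom_H(H,T),\qquad g\longmapsto g\circ f,
\]
is surjective.

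Next compute the image of this map under $\Hom_H(H,T)\cong T$, $h\mapsto h(1)$. A homomorphism $g\in\Hom_H(T^{\oplus a},T)$ is the same as a tuple $(s_1,\dots,s_a)$ with $s_i\in S=\End_H(T)$, acting by $g(x_1,\dots,x_a)=\sum_i s_i(x_i)$. Then
\[
(g\circ f)(1)=g(t_1,\ldots,t_a)=\sum_{i=1}^a s_i t_i .
\]
So the image of the map is exactly $\sum_{i=1}^a St_i\subseteq T$. Surjectivity therefore holds if and only if $\sum_i St_i=T$, which proves both directions at once.

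Injectivity of $f$ is not needed for the equivalence; it only records the setting in which the lemma will be used. No step is a genuine obstacle. The only point needing care is the reduction from arbitrary $D\in\add(T)$ to $D=T$, namely checking that lifts through $T^{\oplus b}$ project back correctly. This is a standard additivity argument.
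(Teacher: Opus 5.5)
Your proposal is correct and follows the paper's proof: both identify $\Hom_H(T^{\oplus a},T)\cong S^{\oplus a}$ and $\Hom_H(H,T)\cong T$, so that precomposition with $f$ becomes $(s_1,\ldots,s_a)\mapsto\sum_i s_i(t_i)$, whose image is $\sum_i St_i$. The only difference is that you spell out the additivity reduction from an arbitrary $D\in\add(T)$ to $D=T$, which the paper covers with ``by definition''.
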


\begin{proof}
By definition,
$f$ is a left $\add(T)$-approximation of $H$ if and only if the map
\[
\Hom_H( T^{\oplus a},T)\longrightarrow\Hom_H(H,T),
\qquad
g\longmapsto g\circ f,
\]
is surjective.
Using the natural identifications
$
\Hom_H(T^{\oplus a},T)\cong S^{\oplus a}$ and
$\Hom_H(H,T)\cong T,
$
the homomorphism induced by composition with $f$ becomes
\[
S^{\oplus a}\longrightarrow T,
\qquad
(s_1,\ldots,s_a)\longmapsto\sum_{i=1}^a s_i(t_i).
\]
Hence it is surjective if and only if
$
T=\sum_{i=1}^aSt_i.$
\end{proof}

We now recall Auslander-Solberg's criterion (\cite[Prop. 2.1]{AS}) for double centralizer property. Note that Auslander-Solberg's original result \cite[Prop. 2.1]{AS} considered only Artin algebras. We replace this Artin algebra assumption with a Noetherian assumption so that we can apply it in our infinite dimensional modules setting. Because of this modification, we include the full details of the proof for the convenience of the readers.

\begin{lemma}[{\cite[Prop. 2.1]{AS}}]\label{double-centralizer-criterion}
$(1)$
Assume that $S=\End_H(T)$ is left Noetherian. Suppose that $H$ has the double-centralizer property relative to $T$. If
$f: H\rightarrow T^{\oplus a}$ is injective and $f$
is a left $\add(T)$-approximation. Then there exist $b\geq0$ and a
homomorphism
$
g:T^{\oplus a}\longrightarrow T^{\oplus b}$
such that
$
0\rightarrow H\xrightarrow{\,f\,}T^{\oplus a}
\xrightarrow{\,g\,}T^{\oplus b}$
is exact and the induced monomorphism
$
\bar g:C:=T^{\oplus a}/f(H)\rightarrow T^{\oplus b}$
is a left $\add(T)$-approximation.

$(2)$ Suppose that there is an exact sequence
$
0\rightarrow H\xrightarrow{\,f\,}T^{\oplus a}
\xrightarrow{\,g\,}T^{\oplus b}$
such that $f$ and the induced map
$
\bar g:C:=T^{\oplus a}/f(H)\rightarrow T^{\oplus b}
$
are left $\add(T)$-approximations. Then $H$ has the double-centralizer property relative to $T$.
\end{lemma}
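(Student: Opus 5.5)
Both parts reduce to applying the duality $(-)^*=\Hom_H(-,T)$ and comparing with the evaluation maps $\alpha$. The basic facts are these. For $X\in\add(T)$ the map $\alpha_X$ is an isomorphism, since $\alpha_T$ is the canonical identification $T\cong\Hom_S(S,T)$ and $\alpha$ commutes with finite direct sums. Moreover $(T^{\oplus a})^*\cong S^{\oplus a}$ as left $S$-modules, and a homomorphism $u:M\to X$ with $X\in\add(T)$ is a left $\add(T)$-approximation if and only if $u^*:X^*\to M^*$ is surjective. This follows because $\add(T)$-objects are summands of $T^{\oplus k}$, so testing against $D=T$ suffices, exactly as in Lemma \ref{left approx}.

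For part $(2)$, I apply $(-)^*$ to $0\to H\xrightarrow{f}T^{\oplus a}\xrightarrow{g}T^{\oplus b}$. Write $g=\iota\circ\pi$ with $\pi:T^{\oplus a}\to C$ and $\iota=\bar g$. Since $\bar g$ is an approximation, $\iota^*$ is onto. The sequence $0\to C^*\xrightarrow{\pi^*}(T^{\oplus a})^*\xrightarrow{f^*}H^*$ is exact by left exactness of $\Hom_H(-,T)$, and $f^*$ is onto. Hence
\[
(T^{\oplus b})^*\xrightarrow{g^*}(T^{\oplus a})^*\xrightarrow{f^*}H^*\to0
\]
is exact, with $\operatorname{im}g^*=\operatorname{im}\pi^*$ because $\iota^*$ is onto. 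Applying the left exact functor $\Hom_S(-,T)$ gives an exact sequence $0\to H^{**}\to(T^{\oplus a})^{**}\to(T^{\oplus b})^{**}$. The maps $\alpha$ give a morphism from the original exact sequence to this one, and $\alpha_{T^{\oplus a}}$ and $\alpha_{T^{\oplus b}}$ are isomorphisms. Both $H$ and $H^{**}$ are the kernel of the same map under these identifications, so $\alpha_H$ is an isomorphism. No Noetherian hypothesis is needed here.

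For part $(1)$, I dualize $f$. Since $f$ is an approximation, $f^*:S^{\oplus a}\to H^*\cong T$ is onto as left $S$-modules. Its kernel $K\subseteq S^{\oplus a}$ is finitely generated because $S$ is left Noetherian; this is the one point where the Noetherian hypothesis replaces the Artin algebra hypothesis. I choose generators and a surjection $S^{\oplus b}\to K$, which gives an exact sequence $S^{\oplus b}\xrightarrow{h}S^{\oplus a}\to H^*\to0$. Since $\Hom_H(T^{\oplus a},T^{\oplus b})\cong\Hom_S(S^{\oplus b},S^{\oplus a})$ (a matrix over $S$ in both cases), $h=g^*$ for a unique $g:T^{\oplus a}\to T^{\oplus b}$. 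I also need $g\circ f=0$. This holds because $f^*g^*=0$, so for every $\phi\in(T^{\oplus b})^*$ we get $\phi\circ g\circ f=0$, and the coordinate projections separate points of $T^{\oplus b}$.

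Dualizing again gives $0\to H^{**}\xrightarrow{f^{**}}(T^{\oplus a})^{**}\xrightarrow{g^{**}}(T^{\oplus b})^{**}$, which is exact by left exactness of $\Hom_S(-,T)$. Via the $\alpha$'s and the hypothesis that $\alpha_H$ is an isomorphism, this sequence identifies with $0\to H\to T^{\oplus a}\to T^{\oplus b}$, which is therefore exact. Finally, $\bar g$ is an approximation because $\bar g^*$ is onto $C^*$. Here $C^*=\ker f^*=\operatorname{im}g^*$ inside $(T^{\oplus a})^*$, and $\pi^*\bar g^*=g^*$, so every element of $C^*$ lifts through $\bar g^*$.

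The main obstacle is the bookkeeping in these identifications. One must check that $(-)^*$ on maps between sums of $T$ really corresponds to matrices over $S$ with the correct side and order conventions, so that $h$ lifts to some $g$ and the double dual recovers $g$ via naturality of $\alpha$.
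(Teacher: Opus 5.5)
Your proposal is correct and is essentially the paper's proof: part $(2)$ is the same comparison of $0\to H\to T^{\oplus a}\to T^{\oplus b}$ with the $\Hom_S(-,T)$-dual of $(T^{\oplus b})^*\to(T^{\oplus a})^*\to H^*\to 0$, and in part $(1)$ you likewise use left Noetherianity only to make $C^*=\ker f^*\subseteq S^{\oplus a}$ finitely generated and cover it by some $S^{\oplus b}$. The only difference is cosmetic: you obtain $g$ by lifting $S^{\oplus b}\to S^{\oplus a}$ through the duality between $\add(T)$ and finitely generated free $S$-modules, and get exactness by double-dualizing this presentation of $H^*$; the paper instead sets $g=h^*\circ\alpha_C\circ\pi$ after proving $\alpha_C$ injective. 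Both constructions give the same map $x\mapsto(\varphi_1(\pi x),\ldots,\varphi_b(\pi x))$, where $\varphi_1,\ldots,\varphi_b\in C^*$ are the chosen generators.
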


\begin{proof}
For (1), set $C=\Coker(f)$. We have
\begin{equation}\label{ses general}
0\longrightarrow H\xrightarrow{\,f\,}T^{\oplus a}
\xrightarrow{\,\pi\,}C\longrightarrow0.
\end{equation}
Since $f$ is a left $\add(T)$-approximation, we obtain the following short exact sequence
\begin{equation}\label{dual ses}
0\longrightarrow C^*
\xrightarrow{\,\pi^*\,}
(T^{\oplus a})^*
\xrightarrow{\,f^*\,}
H^*
\longrightarrow0.
\end{equation}
Now
$(T^{\oplus a})^*\cong S^{\oplus a}$, $H^*\cong T.$
Hence $C^*$ is a submodule of the finitely generated $S$-module
$S^{\oplus a}$. Since $S$ is Noetherian, $C^*$ is finitely generated. Choose
a surjection
\begin{equation}\label{q surj}
h:S^{\oplus b}\twoheadrightarrow C^*.
\end{equation}

We claim that
$
\alpha_C:C\rightarrow C^{**}$
is injective. Applying $\Hom_S(-,T)$ to \eqref{dual ses} gives
\begin{equation}\label{double dual seq}
0\longrightarrow H^{**}
\longrightarrow (T^{\oplus a})^{**}
\longrightarrow C^{**}.
\end{equation}
By naturality of the evaluation maps, \eqref{ses general} and \eqref{double dual seq} fit into the corresponding commutative diagram.
By hypothesis, $\alpha_H$ is an
isomorphism, while
$
\alpha_{T^{\oplus a}}:T^{\oplus a}\xrightarrow{\sim}(T^{\oplus a})^{**}$
is an isomorphism. If $c=\pi(x)\in\ker\alpha_C$, then
$\alpha_{T^{\oplus a}}(x)$ belongs to the image of $H^{**}$. By commutativity
and bijectivity of $\alpha_H$, this implies $x\in f(H)$, and hence $c=0$.
Thus $\alpha_C$ is injective.

Dualizing \eqref{q surj} gives an injection
$h^*:C^{**}\hookrightarrow T^{\oplus b}.$
Define
\[
\bar g=h^*\circ\alpha_C,
\qquad
g=\bar g\circ\pi.
\]
Then $\ker g=f(H)$, so
\[
0\longrightarrow H\xrightarrow{\,f\,}T^{\oplus a}
\xrightarrow{\,g\,}T^{\oplus b}
\]
is exact.
Under the natural identifications, the map
$
\Hom_H(\bar g,T):S^{\oplus b}\rightarrow C^*$
is precisely $h$. Hence it is surjective, and therefore $\bar g$ is a left
$\add(T)$-approximation.

For (2), since $f$ and  $\bar g$  are left $\add(T)$-approximations,  the induced sequence $
0\rightarrow C^*
\rightarrow (T^{\oplus a})^*
\rightarrow H^*
\rightarrow0,$
is exact and  $\bar g$ induces  a surjection
$
(T^{\oplus b})^*\rightarrow C^*.$
Hence
$
(T^{\oplus b})^*\rightarrow (T^{\oplus a})^*
\rightarrow H^*\rightarrow0$
is exact. Applying $\Hom_S(-,T)$ we obtain the following exact sequence
\[
0\longrightarrow H^{**}
\longrightarrow (T^{\oplus a})^{**}
\longrightarrow (T^{\oplus b})^{**}.
\]
Comparison with the original exact sequence, using
$
T^{\oplus a}\cong(T^{\oplus a})^{**},$
$T^{\oplus b}\cong(T^{\oplus b})^{**}, $
gives
$
H\cong H^{**}.$
\end{proof}

For our later application, it is convenient to avoid constructing the second approximation directly. The following lemma shows that, when
$S$ is left Noetherian, it is enough to embed the cokernel of the first approximation into a finite direct sum of copies of
$T$.

\begin{proposition}\label{embedding criterion}
Assume that $S=\End_H(T)$ is left Noetherian and that there is an exact sequence
\begin{equation}\label{criterion exact}
0\longrightarrow H\xrightarrow{\,f\,}T^{\oplus a}
\xrightarrow{\,g\,}T^{\oplus b},
\end{equation}
where $f$ is a left $\add(T)$-approximation. Put
$
C=T^{\oplus a}/f(H).$ Then, there exists an integer $c\geq 0$ and an injective homomorphism
$
\widetilde g:C\rightarrow T^{\oplus(b+c)}$
which is a left $\add(T)$-approximation of $C$. Consequently,   $H$ has the
double-centralizer property relative to $T$.
\end{proposition}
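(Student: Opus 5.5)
We construct the second approximation by enlarging the given map $g$ by generators of $C^*$, and then invoke Lemma~\ref{double-centralizer-criterion}(2).

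\textbf{Step 1: the map $\bar g$.} Since $f(H)=\ker g$, the map $g$ factors through $\pi:T^{\oplus a}\to C$. This gives an injective homomorphism $\bar g:C\to T^{\oplus b}$ with $g=\bar g\circ\pi$. In general $\bar g$ is not a left $\add(T)$-approximation, so we enlarge it.

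\textbf{Step 2: $C^*$ is finitely generated.} Apply $\Hom_H(-,T)$ to $0\to H\xrightarrow{f}T^{\oplus a}\xrightarrow{\pi}C\to0$. Because $f$ is a left $\add(T)$-approximation, the sequence
\[
0\to C^*\xrightarrow{\pi^*}(T^{\oplus a})^*\cong S^{\oplus a}\xrightarrow{f^*}H^*\to0
\]
is exact, exactly as in \eqref{dual ses}. Thus $C^*$ is a left $S$-submodule of $S^{\oplus a}$. Since $S$ is left Noetherian, $C^*$ is finitely generated, say by $\varphi_1,\dots,\varphi_c\in\Hom_H(C,T)$.

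\textbf{Step 3: the enlarged map.} Define
\[
\widetilde g=(\bar g,\varphi_1,\ldots,\varphi_c):C\longrightarrow T^{\oplus b}\oplus T^{\oplus c}=T^{\oplus(b+c)}.
\]
It is injective, because its first component $\bar g$ already is. Under the identifications $\Hom_H(T^{\oplus(b+c)},T)\cong S^{\oplus(b+c)}$, the map $\Hom_H(\widetilde g,T)$ becomes
\[
(s_1,\dots,s_{b+c})\longmapsto\sum_{j\le b}s_j\circ \bar g_j+\sum_{i\le c}s_{b+i}\circ\varphi_i,
\]
where $\bar g_j$ are the components of $\bar g$. Its image contains $\sum_i S\varphi_i=C^*$, so the map is surjective. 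For an arbitrary $D\in\add(T)$, surjectivity follows by additivity: the case $D=T^{\oplus m}$ is componentwise, and summands are handled via the splitting idempotents.

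\textbf{Step 4: conclusion.} The sequence $0\to H\xrightarrow{f}T^{\oplus a}\xrightarrow{\widetilde g\circ\pi}T^{\oplus(b+c)}$ is exact, since $\ker(\widetilde g\pi)=\ker\pi=f(H)$. Both $f$ and the induced map $\widetilde g$ are left $\add(T)$-approximations. Lemma~\ref{double-centralizer-criterion}(2) then gives that $\alpha_H$ is an isomorphism.

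The only substantive point is Step 2, the finite generation of $C^*$. This is exactly where the left Noetherian hypothesis on $S$ replaces the Artin algebra hypothesis in \cite{AS}. It also relies on the exactness of the dualized sequence, which comes from $f$ being an approximation. Everything else is formal.
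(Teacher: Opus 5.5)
Your proposal is correct and follows the paper's proof essentially step for step. The paper also embeds $C^*$ into $S^{\oplus a}$ via $\pi^*$, takes generators $\varphi_1,\ldots,\varphi_c$, sets $\widetilde g=\bar g\oplus(\varphi_1,\ldots,\varphi_c)$, and concludes with Lemma~\ref{double-centralizer-criterion}(2). One minor correction: the finite generation of $C^*$ needs only the injectivity of $\pi^*$, which holds because $\pi$ is surjective, so it does not depend on $f$ being an approximation; that property is used only in the final appeal to the lemma.
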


\begin{proof}
  Let
$
\pi:T^{\oplus a}\twoheadrightarrow C$
be the canonical quotient map. Since $\pi$ is surjective, composition
with $\pi$ gives an injection
\[
C^* =\Hom_H(C,T)
\hookrightarrow
\Hom_H(T^{\oplus a},T)
\cong S^{\oplus a}.
\]
Since $S$ is left Noetherian and $S^{\oplus a}$ is a finitely
generated left $S$-module, $C^*$ is a finitely generated left $S$-module (see \cite[Ch. 2, Lem. 2.5]{GS}).
Therefore there exist an integer $c\geq0$ and a surjective
$S$-module homomorphism
\[
h:S^{\oplus c}\twoheadrightarrow C^*.
\]
Let $e_1,\ldots,e_c$ be the standard generators of $S^{\oplus c}$ and
write
\[
h(e_i)=\varphi_i\in C^*
\qquad(1\leq i\leq c).
\]
Define
\[
j_0:C\longrightarrow T^{\oplus c},
\qquad
j_0(x)=
\bigl(\varphi_1(x),\ldots,\varphi_c(x)\bigr).
\]
The map $j_0$ induces a map,
$
\Hom_H(j_0,T):
\Hom_H(T^{\oplus c},T)
\rightarrow
\Hom_H(C,T)=C^*.$ Let
$i_0: S^{\oplus c}\ra \Hom_H(T^{\oplus c},T)$ be the natural isomorphism.
Then we have $\Hom_H(j_0,T)\circ i_0=h$. Hence $\Hom_H(j_0,T)$ is surjective since $h$ is surjective. It follows that $j_0$ is a
left $\add(T)$-approximation of $C$.

Since \eqref{criterion exact} is exact, $g$ induces an injection
$
\bar g:C\hookrightarrow T^{\oplus b}.
$
Define
\[
\widetilde g
=
\bar g\oplus j_0:
C\longrightarrow
T^{\oplus b}\oplus T^{\oplus c}
\cong
T^{\oplus(b+c)}.
\]
Since $\bar g$ is injective, $\widetilde g$ is injective.
Moreover, $\widetilde g$ is a left $\add(T)$-approximation  since $j_0$ is a left
$\add(T)$-approximation.
Finally, since $\widetilde g$ is injective,
\[
0\longrightarrow H
\xrightarrow{\,f\,}
T^{\oplus a}
\xrightarrow{\,\widetilde g\circ\pi\,}
T^{\oplus(b+c)}
\]
is exact, and the induced map
$
C\hookrightarrow T^{\oplus(b+c)}$
is precisely $\widetilde g$, which is a left
$\add(T)$-approximation. Hence by
Lemma~\ref{double-centralizer-criterion} we conclude that $H$ has the
double-centralizer property relative to $T$.
\end{proof}

\section{Height-one analysis of the affine Hecke algebra}
In this section, we establish the height-one
localization property
$\msH_{\mathfrak p}e\msH_{\mathfrak p}=
\msH_{\mathfrak p}$
needed for the double-centralizer argument. We first collect the
necessary structural properties and then use principal-series
modules to prove  this equality.
\subsection{The affine tensor space and the Bernstein center}
We first establish the finiteness properties and Noetherianity required for subsequent arguments.

Let $\mpk$ be an arbitrary commutative ring
containing an invertible element $\vep$, and put
$q=\vep^2$.
Set
\[
\msH=\afHrk,\qquad
\msH_0=\Hrk,\qquad
\msT=\Ogk^{\otimes r},
\]
and put
\[
\msS=\End_{\msH}(\msT)=\afSrk,
\]
where $\Hrk$ is the subalgebra of $\afHrk$ generated by $T_i$ for $1\leq i\leq r-1$.

Let
\[
\msP=\mpk[X_1^{\pm1},\ldots,X_r^{\pm1}],
\qquad
\msZ=\msP^{\mathfrak S_r}.
\]
We first prove two finiteness properties of $\msH$ and $\msT$ over the Bernstein center $\msZ$ that hold over an arbitrary commutative ground ring.
\begin{lemma}\label{H is finite rank}
$\msH$ is a free $\msZ$-module of finite rank.
\end{lemma}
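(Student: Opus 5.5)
We want to show $\msH$ is a free $\msZ$-module of finite rank, with $\msZ=\msP^{\fS_r}$ where $\msP=\mpk[X_1^{\pm1},\ldots,X_r^{\pm1}]$. The plan has two steps. First, use the Bernstein presentation to see that $\msH$ is a free left $\msP$-module with basis $\{T_w\mid w\in\fSr\}$, so $\msH\cong \msP\otimes_{\mpk}\msH_0$. Second, show that $\msP$ is a free $\msZ$-module of rank $r!$. Combining the two, $\msH$ is free over $\msZ$ of rank $(r!)^2$. Since $\msZ$ is contained in the centre of $\msH$, it does not matter whether we regard $\msH$ as a left or a right $\msZ$-module.

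For the first step, recall that $\afHr$ is free over $\sZ$ with basis $\{T_w\}_{w\in\affSr}$. Every $w\in\affSr$ factors uniquely as $w=t_\mu x$, with $x\in\fSr$ and $t_\mu$ a translation indexed by $\mu\in\mbz^r$; this uses $\affSr\cong\mbz^r\rtimes\fSr$. The standard Bernstein--Zelevinsky/Lusztig argument then shows that $\{X^\mu T_x\mid \mu\in\mbz^r,\ x\in\fSr\}$ is also a $\sZ$-basis, where $X^\mu=X_1^{\mu_1}\cdots X_r^{\mu_r}$. The key point is that $X^\mu T_x$ differs from $\pm$ a power of $\up$ times $T_{t_\mu x}$ by a combination of $T_{w'}$ with $w'$ smaller in the Bruhat order, or of smaller length. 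This gives a unitriangular change of basis, and triangularity survives base change to $\mpk$. I will cite this from the literature (e.g.\ Lusztig, or \cite{DDF}) rather than reprove it. Tensoring with $\mpk$ gives $\msH=\bigoplus_{x\in\fSr}\msP T_x$, with $\msP$ acting freely on each summand.

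For the second step, the aim is to show that $\msP$ is free over $\msZ=\msP^{\fS_r}$ with basis the monomials $X^{a}=X_1^{a_1}\cdots X_r^{a_r}$, $0\le a_i\le i-1$. This is the classical fact for polynomial rings: $\mbz[X_1,\ldots,X_r]$ is free over the symmetric polynomials with this basis, and the statement is proved over $\mbz$ by the Artin/Steinberg argument, so it holds over any commutative ring after base change. For Laurent polynomials, put $e_r=X_1\cdots X_r$, which is invertible and symmetric. Then $\msP=\mpk[X_1,\ldots,X_r][e_r^{-1}]$ and $\msZ=\mpk[X_1,\ldots,X_r]^{\fS_r}[e_r^{-1}]$; the second equality holds because symmetric Laurent polynomials become polynomials after multiplying by a power of $e_r$. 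Since localisation preserves free bases, the same $r!$ monomials form a $\msZ$-basis of $\msP$.

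The main obstacle is the first step, the Bernstein basis statement over $\sZ$, which needs a careful triangularity argument. Everything else is formal. Finally, $\msZ$ is central in $\msH$ by Bernstein's theorem, and even without that, the freeness computation uses only left multiplication by $\msZ\subseteq\msP$.
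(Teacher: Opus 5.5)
Your proposal is correct and follows essentially the same route as the paper. The paper also writes $\msH=\bigoplus_{w\in\fSr}T_w\msP$ (the Bernstein basis), uses that $\msP$ is free of rank $r!$ over $\msZ$, and uses that $\msZ$ is the center. The only difference is that the paper cites Rouquier for the freeness of $\msP$ over $\msZ$, whereas you derive it from the Artin basis of polynomials over symmetric polynomials followed by localization at $e_r$; that derivation is valid.
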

\begin{proof}
By the Bernstein center theorem, $\msZ$ is the center of $\msH$.
Furthermore by \cite[Rem. 2.14]{Rouquier},  $\msP$ is a free $\msZ$-module of rank $r!$. Hence, since
$
\msH=\bigoplus_{w\in\mathfrak S_r}T_w\msP
 $,
 $\msH$ is a free $\msZ$-module of finite rank.
\end{proof}
The tensor-space decomposition \eqref{T decom} gives a corresponding finite-freeness property for $\msT$.
\begin{lemma}\label{T finite free over Z}
The $\msZ$-module $\msT$ is free of finite rank. Furthermore, $\msT$ is  a finitely generated left $\msS$-module.
\end{lemma}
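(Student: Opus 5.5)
Both assertions reduce to the decomposition \eqref{T decom}: $\msT\cong\bigoplus_{\la\in\afLanr}x_\la\msH$ as right $\msH$-modules. The index set $\afLanr$ is finite, since it is in bijection with compositions of $r$ into $n$ parts. Hence it suffices to prove both assertions for each summand $x_\la\msH$, and then for $\msH$ itself.

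\emph{Freeness over $\msZ$.} For each $\la$, I first describe $x_\la\msH$ as a $\msZ$-module. Using the Bernstein presentation, I write $\msH=\bigoplus_{w\in\fSr}T_w\msP$. By Lemma \ref{H is finite rank}, $\msP$ is $\msZ$-free of rank $r!$; fix a $\msZ$-basis $\{p_j\}$ of $\msP$. Then $\msH$ is $\msZ$-free with basis $\{T_wp_j\}$. Next I use the standard decomposition
\[
x_\la\msH=\bigoplus_{d\in\msD_\la}x_\la T_d\,\msP ,
\]
where $\msD_\la$ is the set of distinguished right coset representatives of $\fS_\la$ in $\fSr$. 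This holds because $\msH_0=\bigoplus_{d\in\msD_\la}x_\la$-translates, i.e. $x_\la\msH_0$ has $\mpk$-basis $\{x_\la T_d\}$, together with $\msH=\msH_0\otimes_\mpk\msP$ as a right $\msP$-module. To justify it, I observe that the elements $x_\la T_dT_w'$ are related to $\{T_wp_j\}$ by a unitriangular change of basis. Equivalently, $x_\la\msH\cong x_\la\msH_0\otimes_\mpk\msP$ as right $\msP$-modules, because $\msH\cong\msH_0\otimes_\mpk\msP$ is free as a left $\msH_0$-module. Then $x_\la\msH$ is a free $\msP$-module of rank $|\msD_\la|$, hence a free $\msZ$-module of rank $|\msD_\la|\cdot r!$. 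Summing over $\la$ shows that $\msT$ is $\msZ$-free of finite rank. Here the $\msZ$-action is the right action via the center, which agrees with the left one.

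The step needing most care is that $x_\la\msH_0\otimes_{\msH_0}\msH\to x_\la\msH$ is injective. It is, since $\msH$ is flat (indeed free) as a left $\msH_0$-module; multiplication gives $\msH_0\otimes_\mpk\msP\cong\msH$. Also $x_\la\msH_0$ is $\mpk$-free with basis $\{x_\la T_d\}_{d\in\msD_\la}$, which is the finite Hecke algebra fact.

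\emph{Finite generation over $\msS$.} Every $z\in\msZ$ is central in $\msH$, so right multiplication by $z$ is an $\msH$-endomorphism of $\msT$. This gives a ring map $\msZ\to\msS$, and the left $\msS$-action restricted to $\msZ$ coincides with the $\msZ$-module structure above. A finite $\msZ$-basis of $\msT$ therefore generates $\msT$ as a left $\msS$-module. Alternatively, and more directly, the projections onto the summands $x_\la\msH$ lie in $\msS$, each summand is cyclic over $\msS$ generated by $x_\la$, and so $\msT=\sum_\la\msS x_\la$. I expect no real obstacle here; the only point to verify is the freeness of $x_\la\msH$ over $\msP$ described above.
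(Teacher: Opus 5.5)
Your main argument is correct and is essentially the paper's proof. You reduce via \eqref{T decom} to the finitely many summands $x_\lambda\msH$, and use $x_\lambda\msH=\bigoplus_{d\in\msD_\lambda}x_\lambda T_d\msP$ to get $\msP$-freeness. You then pass to $\msZ$ using that $\msP$ is $\msZ$-free of rank $r!$; this is \cite[Rem.~2.14]{Rouquier}, which appears inside the proof of Lemma~\ref{H is finite rank} rather than in its statement. Finite generation over $\msS$ comes from the ring map $\msZ\to\msS$ given by right multiplication. Your flatness justification of the decomposition of $x_\lambda\msH$ fills in a step the paper simply asserts.

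However, you should drop the ``alternative, more direct'' argument, because it is false in general. For $\phi\in\msS$ and $s_i\in\mathfrak S_\lambda$ one has $\phi(x_\lambda)T_i=\phi(x_\lambda T_i)=q\,\phi(x_\lambda)$. So $\msS x_\lambda$ contains only vectors on which these $T_i$ act by $q$, whereas $x_\lambda\msH$ contains vectors that are not of this kind as soon as $\mathfrak S_\lambda\neq1$.

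Concretely, take $n=1$, $r=2$. Then $\msT=x_{(2)}\msH$ with $x_{(2)}=1+T_1$, and the relation $T_1X_1T_1=qX_2$ gives $x_{(2)}X_1(T_1-q)=(1+T_1)(X_2-qX_1)\neq0$. Hence $x_{(2)}X_1\notin\msS x_{(2)}$, so $\msT\neq\msS x_{(2)}$.

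The shortcut does work when $n\geq r$: take $\lambda=(1,\ldots,1,0,\ldots,0)$, so that $x_\lambda=1$. But for $n<r$, which is the case that matters in the paper, every $\mathfrak S_\lambda$ is nontrivial, and the finite $\msZ$-basis argument is the one to use.
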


\begin{proof}
For $\lambda\in\afLanr$, let $\msD_\lambda$ be the set of
minimal-length representatives of the right cosets
$\mathfrak S_\lambda\backslash\mathfrak S_r$. Then we have
\[
x_\lambda\msH
=
\bigoplus_{d\in\msD_\lambda}
x_\lambda T_d\msP.
\]
 Thus $x_\lambda\msH$ is a free $\msP$-module of finite
rank. By \cite[Rem. 2.14]{Rouquier},
 $\msP$ is a free $\msZ$-module of finite rank.
Hence by
\eqref{T decom} we conclude that $\msT$ is a free
$\msZ$-module of finite rank.

The right action of $\msZ$ on $\msT$ induces a ring homomorphism
$
\rho_{\msZ}:\msZ\longrightarrow \End(\msT).$
Since $\msZ$ is the center of $\msH$, the image of $\rho_{\msZ}$ is contained
in $\msS$. Hence, since $\msT$ is a finite generated $\msZ$-module, $\msT$ is also
finitely generated as a left $\msS$-module.
\end{proof}

We next impose a Noetherian hypothesis on the ground ring in order to obtain the corresponding Noetherianity of $\msZ$ and $\msS$.

\begin{proposition}\label{finite generated}
Assume that $\mpk$ is a Noetherian commutative ring. Then  the ring $\msZ$ is a Noetherian ring.
Moreover,
$\msS$ is both left and right Noetherian.
\end{proposition}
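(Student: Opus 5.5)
First I will show that $\msZ=\msP^{\fS_r}$ is Noetherian. The ring $\msP=\mpk[X_1^{\pm1},\ldots,X_r^{\pm1}]$ is a localization of a polynomial ring over the Noetherian ring $\mpk$. By the Hilbert basis theorem it is therefore Noetherian. One could invoke the classical invariant-theory argument, but I would instead use the structure statement already quoted in the proofs of Lemmas \ref{H is finite rank} and \ref{T finite free over Z}. Concretely, I would identify $\msZ=\mpk[e_1,\ldots,e_{r-1},e_r,e_r^{-1}]$, where the $e_i$ are the elementary symmetric polynomials in the $X_j$. This identification holds over any commutative ring, since $\mpk[X_1,\ldots,X_r]^{\fS_r}=\mpk[e_1,\ldots,e_r]$ and $e_r=X_1\cdots X_r$ is invariant. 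Hence $\msZ$ is a quotient of $\mpk[y_1,\ldots,y_r,y_{r+1}]/(y_ry_{r+1}-1)$, which is Noetherian by Hilbert's basis theorem.

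As a fallback that avoids the explicit description, I would use the Eakin--Nagata theorem. Since $\msP$ is a finitely generated (indeed free of rank $r!$) module over the subring $\msZ$ by \cite[Rem. 2.14]{Rouquier}, and $\msP$ is Noetherian, the subring $\msZ$ is Noetherian.

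Next I will handle $\msS$. By \eqref{T decom} and Lemma \ref{T finite free over Z}, $\msT$ is a free $\msZ$-module of finite rank, say $N$. Since $\msZ$ is central in $\msH$, every $\msH$-endomorphism of $\msT$ is $\msZ$-linear. This gives an embedding of $\msZ$-modules
\[
\msS=\End_{\msH}(\msT)\hookrightarrow \End_{\msZ}(\msT)\cong M_N(\msZ)\cong \msZ^{\oplus N^2}.
\]
Since $\msZ$ is Noetherian, the submodule $\msS$ of this finitely generated $\msZ$-module is itself a finitely generated $\msZ$-module. Moreover, the image of $\msZ$ under $\rho_{\msZ}$ lies in the center of $\msS$: it acts on $\msT$ through the center of $\msH$, so it commutes with all $\msH$-endomorphisms.

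Finally, every left (respectively right) ideal of $\msS$ is in particular a $\msZ$-submodule of $\msS$, with $\msZ$ acting via the central subring $\rho_{\msZ}(\msZ)$. An ascending chain of left or right ideals is then an ascending chain of $\msZ$-submodules of the Noetherian $\msZ$-module $\msS$, so it stabilizes. Hence $\msS$ is left and right Noetherian.

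The only step needing care is the first, namely Noetherianity of $\msZ$ over an arbitrary Noetherian base. There I would rely on the free-of-rank-$r!$ statement together with Eakin--Nagata, or on the explicit generators. Everything else is formal.
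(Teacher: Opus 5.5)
Your proof is correct and follows essentially the same route as the paper: you show $\msZ=\mpk[e_1,\ldots,e_{r-1},e_r^{\pm1}]$ is Noetherian by Hilbert's basis theorem, and $\msS\subseteq\End_{\msZ}(\msT)$ is a finitely generated module over the central image of $\msZ$, so its left and right ideals satisfy the ascending chain condition. The differences are cosmetic. You use freeness of $\msT$ to write $\End_{\msZ}(\msT)\cong\msZ^{\oplus N^2}$, whereas the paper embeds $\End_{\msZ}(\msT)$ into $\msT^{\oplus m}$ using only finite generation; your Eakin--Nagata fallback is valid but not needed.
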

\begin{proof}
By the fundamental theorem of symmetric polynomials, we have
\begin{equation}\label{symmetric polynomials}
\msZ=
\mpk[e_1,\ldots,e_{r-1},e_r^{\pm1}],
\end{equation}
where $
e_i=e_i(X_1,\ldots,X_r)$
denote the $i$th elementary symmetric polynomial in
$X_1,\ldots,X_r$. Hence by \cite[Prop. 7.3 and Cor. 7.6]{AM}, $\msZ$ is Noetherian.

By Lemma~\ref{T finite free over Z}, $\msT$ is   a finitely generated $\msZ$-module.
Choose
$\msZ$-generators
$
t_1,\ldots,t_m$
of $\msT$. Then the map
\[
\End_{\msZ}(\msT)\longrightarrow \msT^{\oplus m},
\qquad
\varphi\longmapsto
\bigl(\varphi(t_1),\ldots,\varphi(t_m)\bigr),
\]
is an injective $\msZ$-module homomorphism.
Since $\msZ$ is a Noetherian ring,
$\msT^{\oplus m}$ is a finitely generated
$\msZ$-module. It follows that $\End_{\msZ}(\msT)$ is a finitely
generated $\msZ$-module (see \cite[Ch. 2, Lem. 2.5]{GS}).
Since
$
\msS=\End_{\msH}(\msT)
\subseteq
\End_{\msZ}(\msT)
$
is an $\msZ$-submodule, $\msS$ is finitely generated over the Noetherian ring $\msZ$. Therefore,
every left ideal and every right ideal of $\msS$ is finitely
generated as an $\msZ$-module, and hence is finitely generated
as a left or right $\msS$-module, respectively. Therefore $\msS$ is both left and
right Noetherian (see \cite[Ch.~2, Lems.~2.3 and 2.5]{GS}).
\end{proof}
Note that by \cite[Rem.~1.7]{DY}, the above proposition implies that ${\mathcal S}_{\vtg}(n,r)_{\mathbb{Z}[v,v^{-1}]}$ is an affine quasi-hereditary $\mathbb{Z}[v,v^{-1}]$-algebra in the sense of \cite{Kl}.

When the ground ring is a unique factorization domain, the Bernstein center has the additional unique factorization property needed in the sequel.
\begin{lemma}\label{Z is UFD}
Assume that $\mpk$ is a unique factorization domain. Then the ring $\msZ$ is a unique factorization domain.
\end{lemma}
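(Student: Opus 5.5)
We use the description \eqref{symmetric polynomials} from the proof of Proposition~\ref{finite generated}:
\[
\msZ=\mpk[e_1,\ldots,e_{r-1},e_r^{\pm1}].
\]
The plan is to reduce to two classical facts: a polynomial ring over a UFD is a UFD, and a localization of a UFD is a UFD.

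\textbf{Step 1: the polynomial ring.} Consider the subring $A=\mpk[e_1,\ldots,e_r]\subseteq\mpk[X_1,\ldots,X_r]$. By the fundamental theorem of symmetric polynomials, which holds over any commutative ring, $e_1,\ldots,e_r$ are algebraically independent over $\mpk$. Hence $A$ is a polynomial ring in $r$ variables over $\mpk$. Since $\mpk$ is a UFD, Gauss's lemma applied inductively shows that $A$ is a UFD.

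\textbf{Step 2: the localization.} Next we identify $\msZ$ with the localization $A[e_r^{-1}]$. On one side, $e_r=X_1\cdots X_r$ is invertible in $\msP$, so $A[e_r^{-1}]\subseteq\msZ$.

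For the reverse inclusion, take a symmetric Laurent polynomial $f\in\msZ$. Choose $N\geq0$ large enough that $e_r^Nf=(X_1\cdots X_r)^Nf$ is an honest polynomial. This polynomial is still symmetric, so it lies in $A$, and therefore $f\in A[e_r^{-1}]$.

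The localization $A[e_r^{-1}]$ embeds into $\msP$, and $\msP$ is a domain. This is consistent with \eqref{symmetric polynomials}.

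\textbf{Step 3: the UFD property passes to $\msZ$.} A localization $U^{-1}A$ of a UFD $A$ at a multiplicative set $U$ not containing $0$ is again a UFD. This can be cited from standard references, or argued via Nagata's criterion, or directly as follows.
\begin{itemize}
\item Every nonzero nonunit of $A[e_r^{-1}]$ can be written as a unit times an element of $A$.
\item That element of $A$ factors into primes of $A$.
\item Primes of $A$ dividing $e_r$ become units in $A[e_r^{-1}]$.
\item The remaining primes of $A$ stay prime in $A[e_r^{-1}]$.
\end{itemize}
Here $e_r$ is a nonzero element of the variable set of $A$, so localization is legitimate.

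The only point that needs any care is the identification $\msZ=A[e_r^{-1}]$ together with the algebraic independence of the $e_i$ over an arbitrary UFD $\mpk$. Both follow from the integral version of the fundamental theorem, so I expect no genuine obstacle.
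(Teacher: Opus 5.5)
Your proposal is correct and follows the paper's own argument: $\mpk[e_1,\ldots,e_r]$ is a polynomial ring over a UFD, hence a UFD, and by \eqref{symmetric polynomials} $\msZ$ is its localization at $e_r$, hence also a UFD. The paper leaves implicit both the identification $\msZ=A[e_r^{-1}]$ and the fact that a localization of a UFD is a UFD; you verify both explicitly.
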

\begin{proof}
Since $\mpk$ is a unique factorization domain, $\mpk[e_1,\ldots,e_{r-1},e_r]$ is a unique factorization domain.
Hence by \eqref{symmetric polynomials},
$\msZ$  is also a unique factorization domain.
\end{proof}

\subsection{Height-one primes}
We now consider height-one primes of the Bernstein center.
The next lemma shows that, for a prime $\mathfrak q$ of $\msP$
whose contraction to $\msZ$ has height one, there exists at most one
unordered pair $\{i,j\}$ such that
$X_i=q^{\pm1}X_j$
in the fraction field of $\msP/\mathfrak q$.

\begin{lemma}\label{height one q link}
Assume that $\mpk$ is a unique
factorization domain.
Let
$
\mathfrak p\in\Spec \msZ,$ $\operatorname{ht}\mathfrak p=1
$.
Then there exists
$
\mathfrak q\in\Spec \msP$
such that
$
\mathfrak q\cap \msZ=\mathfrak p$.
Moreover, for every
$
\mathfrak q\in\Spec\msP$
satisfying
$
\mathfrak q\cap\msZ=\mathfrak p,
$
we have
$
\operatorname{ht}\mathfrak q=1,
$
and
there exists at most one unordered pair
$\{i,j\}\subseteq\{1,\ldots,r\}$, $i\neq j,$
such that
$X_i=q^{\pm1}X_j$ in $\kappa(\mathfrak q)$, where
$\kappa(\mathfrak q)=\Frac(\msP/\mathfrak q)$
is the fraction field of $\msP/\mathfrak q$.
\end{lemma}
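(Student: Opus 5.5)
The proof has three parts: existence of $\mathfrak q$ by lying over, $\operatorname{ht}\mathfrak q=1$ by combining integrality with flatness, and uniqueness of the pair by the UFD property of $\msP$.

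\emph{Existence and height.} By \cite[Rem. 2.14]{Rouquier} (as used in Lemma~\ref{H is finite rank}), $\msP$ is a free $\msZ$-module of rank $r!$. Hence $\msZ\subseteq\msP$ is an integral extension, and lying over gives a prime $\mathfrak q$ of $\msP$ with $\mathfrak q\cap\msZ=\mathfrak p$.

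Now let $\mathfrak q$ be any prime with $\mathfrak q\cap\msZ=\mathfrak p$. Incomparability for integral extensions shows that any chain of primes inside $\mathfrak q$ contracts to a chain of the same length inside $\mathfrak p$, so $\operatorname{ht}\mathfrak q\le\operatorname{ht}\mathfrak p=1$. Since $\msP$ is free over $\msZ$, it is flat, and going down holds. A chain $0\subsetneq\mathfrak p$ therefore lifts to a chain ending in $\mathfrak q$, giving $\operatorname{ht}\mathfrak q\ge1$. Alternatively, it is enough to note that $\mathfrak q\neq0$ because $\mathfrak p\neq0$. So $\operatorname{ht}\mathfrak q=1$.

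\emph{At most one pair.} The statement ``$X_i=q^{\pm1}X_j$ in $\kappa(\mathfrak q)$'' means that $X_i-q^{\pm1}X_j\in\mathfrak q$. Since $\mpk$ is a UFD, $\mpk[X_1,\dots,X_r]$ is a UFD, and so is its localization $\msP$. In a UFD every height-one prime is principal, so $\mathfrak q=(h)$ for some prime element $h$.

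Suppose $X_i-cX_j\in\mathfrak q$ with $c\in\{q,q^{-1}\}$. The element $X_i-cX_j$ is a nonunit, since the units of $\msP$ are units of $\mpk$ times monomials. It is irreducible: the quotient $\msP/(X_i-cX_j)\cong\mpk[X_k^{\pm1}\,(k\neq i)]$ is a domain, so $X_i-cX_j$ is prime. Since $h$ divides it and both are primes, they are associates, and hence
\[
\mathfrak q=(X_i-cX_j).
\]

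Now suppose a second element $X_k-c'X_l$ with $c'\in\{q,q^{-1}\}$ also lies in $\mathfrak q$. Map it to the quotient $\mpk[X_m^{\pm1}\,(m\neq i)]$ by substituting $X_i\mapsto cX_j$. If $\{k,l\}\neq\{i,j\}$, its image is a nonzero binomial. For example, with $k=i$ and $l\ne j$ the image is $cX_j-c'X_l\neq0$, and the other cases are similar. So its image is nonzero and it does not lie in $\mathfrak q$. Hence $\{k,l\}=\{i,j\}$, and the unordered pair is unique.

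\emph{Expected obstacle.} I expect the main care to be needed in the height argument, namely justifying going down via the flatness (freeness) of $\msP$ over $\msZ$, and in verifying that $X_i-cX_j$ is prime in the Laurent ring over a UFD. The coefficient of $X_i$ is $1$, which makes the quotient computation straightforward.
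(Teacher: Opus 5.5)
Your proposal is correct and follows the paper's proof essentially step for step: lying over for the finite (free) extension $\msZ\subseteq\msP$, $\operatorname{ht}\mathfrak q\le 1$ by incomparability and $\operatorname{ht}\mathfrak q\ge 1$ because $\mathfrak q\neq 0$, and then $\mathfrak q=(F)$ principal in the UFD $\msP$, with $F$ associate to any $X_i-q^{\pm1}X_j$ lying in $\mathfrak q$. Your quotient and substitution argument only supplies details the paper leaves implicit, namely that $X_i-cX_j$ is prime and that no $X_k-c'X_l$ with $\{k,l\}\neq\{i,j\}$ lies in $(X_i-cX_j)$; the going-down detour is unnecessary.
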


\begin{proof}
By \cite[Rem. 2.14]{Rouquier},   $\msP$ is a finitely generated $\msZ$-module,
Hence by \cite[Prop. 5.1]{AM} $\msP$ is integral over $\msZ$.
Therefore by \cite[Th. 5.10]{AM},  there exists
$
\mathfrak q\in\Spec \msP$
such that
$
\mathfrak q\cap \msZ=\mathfrak p$.

Now let $\mathfrak q\in\Spec\msP$ be any prime satisfying
$
\mathfrak q\cap\msZ=\mathfrak p.$
By \cite[Cor. 5.9]{AM}
$
\operatorname{ht}\mathfrak q\leq\operatorname{ht}\mathfrak p=1.$
Since
$
\mathfrak q\cap \msZ=\mathfrak p\neq(0),$
we have $\mathfrak q\neq(0)$. As $\msP$ is a domain,
$
(0)\subsetneq\mathfrak q,$
so
$
\operatorname{ht}\mathfrak q\geq1.$
Therefore
$
\operatorname{ht}\mathfrak q=1.$

Since
$
\msP=\mpk[X_1^{\pm1},\ldots,X_r^{\pm1}]$
is a unique factorization domain and $
\operatorname{ht}\mathfrak q=1$,
$
\mathfrak q=(F)$
for an irreducible Laurent polynomial $F$.
Suppose
\[
X_i=q^\varepsilon X_j
\qquad
(\varepsilon\in\{1,-1\})
\]
in $\kappa(\mathfrak q)$ for some $i,j$. Since $\msP/\mathfrak q$ embeds into its fraction
field,
$
X_i-q^\varepsilon X_j\in\mathfrak q.$
Since
$
X_i-q^\varepsilon X_j
=
X_j(X_iX_j^{-1}-q^\varepsilon)$
is irreducible,
$
F\sim X_i-q^\varepsilon X_j.$
If there were another relation
\[
X_k=q^\eta X_l
\qquad
(\eta\in\{1,-1\}),
\]
then
$
F\sim X_k-q^\eta X_l.$
Thus
$
X_i-q^\varepsilon X_j
\sim
X_k-q^\eta X_l.$ It follows that
$
\{i,j\}=\{k,l\}.$  This proves the
assertion.
\end{proof}

\subsection{Principal-series modules}
From now on, let $\mpk$ be a field of characteristic $0$, let
$\vep\in\mpk^\times$, and put $q=\vep^2$. Assume that $\vep$
(or equivalently, $q$) is not a root of unity.

Recall that  $
x_\lambda=\sum_{w\in\mathfrak S_\lambda}T_w $ for
$
\lambda \in\afLanr$
By \cite[Lem. 3.1]{DF15} we have
\begin{equation*}\label{xla^2}
x_\lambda^2=c_\lambda x_\lambda,
\qquad
c_\lambda=
\sum_{w\in\mathfrak S_\lambda}q^{\ell(w)}
=
\prod_{i=1}^n[\lambda_i]_q!,
\end{equation*}
where
$
[j]_q=1+q+\cdots+q^{j-1}.$
Since $q$ is not a root of unity,  $c_\lambda\in \mpk^\times$ for every $\lambda$.
Let
\begin{equation}\label{ela}
e_\lambda=c_\lambda^{-1}x_\lambda.
\end{equation}
Then
$
e_\lambda
$
is an idempotent and
$
x_\lambda \msH=e_\lambda \msH.$
It follows from \eqref{T decom} that
\begin{equation}\label{tensor-decomp}
\msT\cong
\bigoplus_{\lambda\in\afLanr}x_\lambda \msH
\cong
\bigoplus_{\lambda\in\afLanr}e_\lambda \msH.
\end{equation}
Assume $n\geq 2$ and set
\begin{equation}\label{the element e}
\delta=(r-1,1,0,\ldots,0)\in\afLanr,
\qquad
e=e_\delta.
\end{equation}

Fix a height-one prime $\mathfrak p$ of $\msZ$ and a prime
$\mathfrak q$ of $\msP$ lying above it as in Lemma~\ref{height one q link}. We shall
study the principal-series modules associated with the corresponding
point and show that all of their simple composition factors have
nonzero $e$-part.

Let
$
\mathfrak p\in\Spec\msZ$ with $\operatorname{ht}\mathfrak p=1$.
By Lemma  \ref{height one q link} there exists
$
\mathfrak q\in\Spec \msP$
such that
$
\mathfrak q\cap \msZ=\mathfrak p$ and
$
\operatorname{ht}\mathfrak q=1.$ Let
\[
k=\overline{\Frac(\msP/\mathfrak q)}.
\]
Via the natural embedding $\mpk\hookrightarrow k$, we extend scalars
from $\mpk$ to $k$. For simplicity, we continue to write
$
\msH, \msH_0, \msP, e$
for the corresponding scalar extensions
$
k\otimes_{\mpk}\msH,
k\otimes_{\mpk}\msH_0,
k\otimes_{\mpk}\msP,
1\otimes e,$
respectively.

For any
$
b=(b_1,\ldots,b_r)\in(k^\times)^r,$
let $k_b$ denote the one-dimensional right $\msP$-module on which
$
X_i$
acts as multiplication by $b_i$, and define  the corresponding
principal-series module
\[
I(b)=k_b\otimes_{\msP}\msH.
\]

For $1\leq i<r$, set
\[
\Phi_i=(X_i-X_{i+1})T_i+(q-1)X_{i+1}.
\]
Then we have
\begin{equation}\label{Phi-intertwining}
\Phi_iX_i=X_{i+1}\Phi_i,\qquad
\Phi_iX_{i+1}=X_i\Phi_i,\quad \Phi_iX_j=X_j\Phi_i\qquad(j\neq i,i+1)
\end{equation}
and
\begin{equation}\label{Phi-square}
\Phi_i^2
=
(X_i-qX_{i+1})(X_{i+1}-qX_i)
\end{equation}
(cf. \cite[\S5]{Lu89}, \cite[\S4.2]{BK}).

The symmetric group $\frak S_r$ acts on $(k^\times)^r$ on the right by
$
(b\cdot w)_j=b_{w(j)}$ ($1\leq j\leq r$) for $b\in(k^\times)^r$ and $w\in\frak S_r$.
For $b\in (k^\times)^r$
we say that $b_i,b_j$ are $q$-linked if
$
b_i=q^{\pm1}b_j.$

\begin{lemma}\label{neighbor-iso}
Let
$
b=(b_1,\ldots,b_r)\in(k^\times)^r.$
If $b_i$ and $b_{i+1}$ are not $q$-linked for some
$1\leq i<r$, then
$
I(b)\cong I(b\cdot s_i)$
as right $\msH$-modules, where $s_i=(i,i+1)\in\mathfrak S_r $
is the simple transposition interchanging $i$ and $i+1$.
\end{lemma}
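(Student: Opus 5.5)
Let $v_b=1\otimes 1\in I(b)$ denote the cyclic generator; it satisfies $v_bX_j=b_jv_b$ for all $j$. As a $k$-vector space, $I(b)=v_b\msH_0$ has dimension $r!$. The plan is to build mutually inverse homomorphisms $I(b)\to I(b\cdot s_i)$ and $I(b\cdot s_i)\to I(b)$ using the intertwiner $\Phi_i$.

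First we construct the map $I(b\cdot s_i)\to I(b)$. By the intertwining relations \eqref{Phi-intertwining}, the vector $u=v_b\Phi_i\in I(b)$ satisfies
\[
uX_i=v_bX_{i+1}\Phi_i=b_{i+1}u,\qquad uX_{i+1}=b_iu,\qquad uX_j=b_ju\quad (j\ne i,i+1).
\]
So $u$ spans a copy of $k_{b\cdot s_i}$ as a right $\msP$-module. Since $I(b\cdot s_i)=k_{b\cdot s_i}\otimes_{\msP}\msH$ is induced, the universal property of induction gives a unique $\msH$-homomorphism
\[
\phi:I(b\cdot s_i)\to I(b),\qquad v_{b\cdot s_i}\mapsto v_b\Phi_i.
\]
In the same way, applying the construction with $b\cdot s_i$ in place of $b$ (and using $(b\cdot s_i)\cdot s_i=b$) gives
\[
\psi:I(b)\to I(b\cdot s_i),\qquad v_b\mapsto v_{b\cdot s_i}\Phi_i.
\]

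Next we compute the composite. We have $\phi\psi(v_b)=\phi(v_{b\cdot s_i})\Phi_i=v_b\Phi_i^2$. By \eqref{Phi-square}, $\Phi_i^2=(X_i-qX_{i+1})(X_{i+1}-qX_i)$ lies in $\msP$, so
\[
\phi\psi(v_b)=(b_i-qb_{i+1})(b_{i+1}-qb_i)\,v_b .
\]
Because $b_i$ and $b_{i+1}$ are not $q$-linked, the scalar $c=(b_i-qb_{i+1})(b_{i+1}-qb_i)$ is nonzero. Since $v_b$ generates $I(b)$, the composite $\phi\psi$ equals $c\cdot\id$, and by symmetry $\psi\phi=c\cdot\id$ as well, with the same scalar $c$. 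Hence $c^{-1}\phi$ is inverse to $\psi$, and $I(b)\cong I(b\cdot s_i)$.

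The main point to check is that $v_b\Phi_i$ is a genuine $\msP$-weight vector of weight $b\cdot s_i$. We need $v_b\Phi_iX_i=v_bX_{i+1}\Phi_i=b_{i+1}v_b\Phi_i$, and we must verify that $(b\cdot s_i)_i=b_{s_i(i)}=b_{i+1}$ matches this weight under the paper's right-action convention. Once this is confirmed, everything else follows directly from \eqref{Phi-intertwining} and \eqref{Phi-square}. Note that we never need $\phi$ to be nonzero separately, because invertibility of the composite already forces it.
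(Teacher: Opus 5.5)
Your proposal is correct and is essentially the paper's own proof. You use the intertwining relations for $\Phi_i$ to get homomorphisms $I(b\cdot s_i)\to I(b)$ and $I(b)\to I(b\cdot s_i)$ sending each cyclic generator to that generator times $\Phi_i$. The formula for $\Phi_i^2$ then shows that both composites are multiplication by the nonzero scalar $c=(b_i-qb_{i+1})(b_{i+1}-qb_i)$, and your check that $(b\cdot s_i)_i=b_{i+1}$ under the paper's convention is right.
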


\begin{proof}
Put $
b'=b\cdot s_i,\
v_b=1\otimes1\in I(b),\
v_{b'}=1\otimes1\in I(b').$
By \eqref{Phi-intertwining},  there are
$\msH$-module homomorphisms
\[
F_i:I(b')\longrightarrow I(b),
\qquad
G_i:I(b)\longrightarrow I(b')
\]
such that
$
F_i(v_{b'})=v_b\Phi_i, \
G_i(v_b)=v_{b'}\Phi_i.$
By \eqref{Phi-square},
$
F_iG_i
=
c\,\id_{I(b)},
\
G_iF_i
=
c\,\id_{I(b')},
$
where
$
c=(b_i-qb_{i+1})(b_{i+1}-qb_i).
$
Since $b_i\neq q^{\pm1}b_{i+1}$, we have $c\neq0$. Thus $F_i$ is an
isomorphism, with inverse $c^{-1}G_i$. Therefore
$
I(b)\cong I(b')
=
I(b\cdot s_i).$
\end{proof}

Thus neighboring parameters which are not $q$-linked may
be interchanged without changing the isomorphism class of the
principal-series module. We first recall the irreducibility
criterion for the case in which no $q$-linked pair occurs.

Chari--Pressley \cite[Prop.~3.4(c)]{CP}, following
Rogawski \cite{R}, recall the following irreducibility criterion
over $\mathbb C$.  The same criterion is valid over any algebraically closed field of characteristic
$0$.

\begin{lemma}\label{CP charzero}
If
$
b=(b_1,\ldots,b_r)\in(k^\times)^r$
is such that
$
b_i\neq q^{\pm1}b_j
\ (i\neq j),$
then $I(b)$ is irreducible.
\end{lemma}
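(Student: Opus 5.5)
Since $I(b)=k_b\otimes_{\msP}\msH\cong k_b\otimes_k\msH_0$ has dimension $r!$, I would prove irreducibility by showing that every nonzero submodule $M\subseteq I(b)$ contains the cyclic vector $v_b=1\otimes 1$. The argument runs through the weight decomposition of $I(b)$ with respect to the commutative subalgebra $\msP$, and uses the intertwiners $\Phi_i$ to move between weight spaces.

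\emph{Step 1: the $\msP$-weights of $I(b)$.} Filter $I(b)$ by length using the basis $v_bT_w$ ($w\in\fSr$). The Bernstein relations show that $v_bT_wX_j\equiv b_{w(j)}\,v_bT_w$ modulo the span of the $v_bT_{w'}$ with $\ell(w')<\ell(w)$. So $\msP$ acts triangularly, and the generalized weights are the characters $b\cdot w$ for $w\in\fSr$. Because the $b_i$ are distinct (no relation $b_i=b_j$ is excluded by hypothesis), I would need these $r!$ characters to be pairwise distinct. If some $b_i=b_j$ with $i\ne j$ were allowed, the hypothesis $b_i\neq q^{\pm1}b_j$ would not cover it, so I would first check whether the statement tacitly excludes repeated entries. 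The criterion in \cite{CP} says $b_i\neq q^{\pm1}b_j$, and I believe repeated values are still allowed there. In that case I would instead work with generalized weight spaces, which remain well defined.

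\emph{Step 2: reduce to the most generic situation.} Every nonzero $\msP$-submodule contains a simultaneous eigenvector, since $\msP$ is commutative and $k$ is algebraically closed. Hence $M$ contains an eigenvector $m$ of some weight $b\cdot w$. Take a reduced word $w=s_{i_1}\cdots s_{i_l}$. By \eqref{Phi-intertwining}, right multiplication by the appropriate $\Phi_i$ sends eigenvectors of weight $c$ to eigenvectors of weight $c\cdot s_i$. Applying the string of $\Phi$'s therefore carries $m$ back to an eigenvector of weight $b$. Since the weight-$b$ space is spanned by $v_b$ when the weights are distinct, the result lies in $kv_b$. By \eqref{Phi-square}, applying $\Phi_i$ twice multiplies by $(c_i-qc_{i+1})(c_{i+1}-qc_i)$, which is nonzero because no pair is $q$-linked. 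This lets me show the intermediate vectors never vanish: if one did, applying the inverse string would force $m=0$ up to these nonzero scalars. I would phrase this via Lemma \ref{neighbor-iso}: the compositions $F_iG_i$ are nonzero scalars, so the maps are injective on weight spaces. The conclusion is $v_b\in M$, hence $M=I(b)$.

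\emph{Main obstacle.} The hard step is handling coinciding $b_i=b_j$, where weight spaces are not one-dimensional and $\Phi_i$ at equal parameters needs care. There I would instead cite Rogawski's/Kato's criterion directly: over any algebraically closed field of characteristic $0$, the proof in \cite{R}, \cite{CP} is purely algebraic (Kato's theorem) and uses only that $q$ is not a root of unity. Alternatively, I would reduce to $\mathbb C$ by a Lefschetz-principle argument, descending $b$ and $q$ to a finitely generated subfield embeddable in $\mathbb C$; irreducibility is a first-order condition on the finite-dimensional structure constants.
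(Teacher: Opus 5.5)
Your proposal is correct, but it takes a different route from the paper. The paper proves nothing here: it quotes Rogawski's criterion from Chari--Pressley and asserts, without argument, that it holds over any algebraically closed field of characteristic $0$. For pairwise distinct $b_i$ you instead give a self-contained proof, by the same mechanism the paper later uses in Lemma~\ref{Lpm simple}:
\begin{itemize}
\item a nonzero submodule contains a $\msP$-eigenvector;
\item $\Phi_i$ is injective on eigenvectors, because $\Phi_i^2$ acts by $(c_i-qc_{i+1})(c_{i+1}-qc_i)\neq0$;
\item the weight-$b$ eigenspace is $kv_b$.
\end{itemize}
For repeated entries you land on the same citation as the paper, but you justify the passage to $k$, which the paper does not. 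Stated cleanly: over an algebraically closed field, irreducibility of the $r!$-dimensional module means the image of $\msH$ in $\End_k(I(b))$ has dimension $(r!)^2$. This condition is unchanged under extension from a finitely generated field of definition, and such a field embeds in $\mathbb C$.

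Be aware that repeated entries are not a corner case. In Lemma~\ref{principal series main}, a height-one prime $\mathfrak q=(X_s-X_t)$ gives $a_s=a_t$ with no $q$-link, so your argument still relies on the citation exactly where the paper uses the lemma. You can remove that dependence.
\begin{itemize}
\item Your concern about $\Phi_i$ at equal parameters is unfounded. If $c_i=c_{i+1}$, then $m\Phi_i=(q-1)c_{i+1}m$, so the intertwiner chain still puts a nonzero weight-$b$ eigenvector into $M$.
\item Passing to generalized weight spaces does not give what you need. The missing fact is that the genuine $b$-eigenspace is $kv_b$, equivalently, by Frobenius reciprocity, $\End_{\msH}(I(b))=k$.
\item In the height-one situation there is at most one coincident pair, since distinct $X_s-X_t$ are non-associate irreducibles and $\mathfrak q$ is principal.
\item Move that pair to positions $1,2$ using Lemma~\ref{neighbor-iso}. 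The generalized $b$-weight space is then $\spann\{v_b,v_bT_1\}$.
\item The relation $v_bT_1X_1=b_2v_bT_1-(q-1)b_2v_b$ shows that $X_1$ acts on this space by a nontrivial Jordan block, so the eigenspace is $kv_b$.
\end{itemize}
Only with several coincident entries, which the lemma as stated allows, do you genuinely need Kato's theorem.
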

 In view of Lemmas \ref{height one q link} and \ref{CP charzero}, it remains to consider the case in which there exists exactly one unordered pair $\{i,j\}$ such that $b_i$ and $b_j$ are $q$-linked. For this purpose we
introduce two induced modules corresponding to the two possible
orientations of such a pair.

Let
$
\msH^{(1)}$
be the subalgebra of $\msH$ generated by $\msP$ and $T_1$.
For any
$
b=(b_1,\ldots,b_r)\in(k^\times)^r$
satisfying $
b_2=qb_1,$
let $k_b^+$ be the one-dimensional right $\msH^{(1)}$-module
defined by
$
X_j\mapsto b_j\ (1\leq j\leq r)$,
$
T_1\mapsto q,$
and set
\[
L^+(b)=k_b^+\otimes_{\msH^{(1)}}\msH.
\]
Similarly, for any
$
c=(c_1,\ldots,c_r)\in(k^\times)^r$
satisfying
$
c_1=qc_2,$
let $k_c^-$ be the one-dimensional right $\msH^{(1)}$-module
defined by
$
X_j\mapsto c_j\ (1\leq j\leq r),
\
T_1\mapsto -1,$
and set
\[
L^-(c)=k_c^-\otimes_{\msH^{(1)}}\msH.
\]
Then we have
\begin{equation}\label{Lpm dim}
\dim_k L^+(b)=\dim_k L^-(c)
=
[\mathfrak S_r:\langle s_1\rangle]
=
\frac{r!}{2}.
\end{equation}

Recall the element $e=e_\delta$ defined in \eqref{the element e}. The following lemma identifies the basic simple modules
arising in the one-link case and, crucially for our later
application, shows that their $e$-parts are nonzero.
\begin{lemma}\label{Lpm simple}
Let
$
b=(b_1,\ldots,b_r)\in(k^\times)^r$
have pairwise distinct entries,
and suppose that $\{1,2\}$ is the unique unordered pair of distinct indices $\{i,j\}$ such that $b_i$ and $b_j$ are $q$-linked.
\begin{enumerate}
\item[(1)]
If $b_2=qb_1$, then $L^+(b)$ is simple and
$
L^+(b)e\neq0.$

\item[(2)]
If $b_1=qb_2$, then $L^-(b)$ is simple and
$L^-(b)e\neq0.$
\end{enumerate}
\end{lemma}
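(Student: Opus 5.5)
The plan is to prove the two assertions separately: simplicity through an $X$-weight (Jacquet-type) analysis using the intertwiners $\Phi_i$, and nonvanishing of the $e$-part through the restriction to the finite Hecke algebra $\msH_0$. Throughout I treat $L^+(b)$; the case of $L^-(b)$ is identical except for the sign character.

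\emph{Step 1: the weights of $L^+(b)$.} Since $\msH=\bigoplus_{w}T_w\msP$ over $\mathfrak S_r$, the algebra $\msH$ is free as a left $\msH^{(1)}$-module. A basis is $\{T_d\}$, where $d$ runs over the minimal length representatives of $\langle s_1\rangle\backslash\mathfrak S_r$.

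1. Filter $L^+(b)$ by the length of $d$, using the Bernstein relations $T_iX_iT_i=qX_{i+1}$.
2. In the associated graded, $\msP$ acts on the piece spanned by $v\otimes T_d$ through the character $b\cdot d$, up to the convention for the right action.
3. The entries of $b$ are pairwise distinct, so the $r!/2$ characters $b\cdot d$ are pairwise distinct. Hence $L^+(b)$ decomposes as a direct sum of one-dimensional simultaneous $\msP$-eigenspaces.
4. The weights occurring are exactly the rearrangements of $(b_1,\ldots,b_r)$ in which $b_1$ stands to the left of $b_2$. Write $\Lambda$ for this set.

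\emph{Step 2: simplicity.} Let $0\neq M\subseteq L^+(b)$ be a submodule. Since $\msP$ is commutative and the weight spaces are one-dimensional, $M$ contains a weight vector $m$ of some weight $c\in\Lambda$.

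1. Take $i$ with $c_i,c_{i+1}$ not $q$-linked. By \eqref{Phi-intertwining}, $m\Phi_i$ is either zero or a weight vector of weight $c\cdot s_i$.
2. By \eqref{Phi-square}, $m\Phi_i^2=(c_i-qc_{i+1})(c_{i+1}-qc_i)m\neq0$, so $m\Phi_i\neq0$.
3. The only $q$-linked pair is $\{b_1,b_2\}$. So the permitted moves are exactly the adjacent transpositions that do not interchange $b_1$ and $b_2$ when they are adjacent.
4. The set $\Lambda$ is connected under these moves. Any arrangement with $b_1$ before $b_2$ can be sorted to a fixed one by bubble-sorting the other entries and $b_1,b_2$ independently, without ever swapping $b_1$ past $b_2$.
5. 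Hence $M$ contains every weight space, so $M=L^+(b)$.

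I expect the main care to be needed in Step 1: checking that the weights are exactly $\Lambda$, each with multiplicity one, i.e.\ doing the triangularity of the $X$-action on the basis $v\otimes T_d$ correctly with the paper's right-action conventions. If this becomes messy, I would instead compare with $I(b)$, which contains $L^+(b)$ or has it as a quotient, and whose weights are all of $b\cdot\mathfrak S_r$ with multiplicity one.

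\emph{Step 3: $L^{\pm}(b)e\neq0$.} Restricted to $\msH_0=\Hrk\otimes k$, and using the same coset basis, $L^+(b)$ is $\mathrm{triv}\otimes_{\msH_0^{(1)}}\msH_0$, where $\msH_0^{(1)}=\langle T_1\rangle$. Likewise $L^-(b)$ is $\mathrm{sgn}\otimes_{\msH_0^{(1)}}\msH_0$. Since $q$ is not a root of unity and the characteristic is $0$, $\msH_0$ is semisimple with the same representation theory as $\mathfrak S_r$.

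1. The idempotent $e=e_\delta$ satisfies $Ve\neq0$ exactly when $V$ contains a simple constituent of $e\msH_0\cong\mathrm{Ind}_{\mathfrak S_{r-1}\times\mathfrak S_1}\mathrm{triv}=S^{(r)}\oplus S^{(r-1,1)}$.
2. For $L^+$: by Frobenius reciprocity, $\mathrm{Ind}_{\langle s_1\rangle}\mathrm{triv}$ contains the trivial module. Directly, $x_\delta$ acts on the trivial module by $c_\delta\neq0$.
3. For $L^-$: by Frobenius reciprocity, $\mathrm{Ind}_{\langle s_1\rangle}\mathrm{sgn}$ contains $S^{(r-1,1)}$ as soon as $S^{(r-1,1)}$ restricted to $\langle s_1\rangle$ contains the sign. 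This holds for $r\geq2$, because $S^{(r-1,1)}$ is the reflection representation, on which $s_1$ has eigenvalue $-1$.
4. Since $S^{(r-1,1)}$ occurs in $e\msH_0$, we get $S^{(r-1,1)}e\neq0$.

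This gives $L^\pm(b)e\neq0$.
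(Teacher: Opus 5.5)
Your proposal is correct. Your simplicity argument in Step 2 is essentially the paper's: a nonzero submodule contains a weight vector, the intertwiners $\Phi_i$ are nonzero on non-linked adjacent pairs by \eqref{Phi-square}, and the arrangements with $b_1$ before $b_2$ are connected by such moves. The two routes differ elsewhere.

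\emph{Weight decomposition.} You filter $L^+(b)$ by $\ell(d)$ on the basis $v\otimes T_d$ and read off the characters $b\cdot d$ on the graded pieces. The paper skips this triangularity computation. For each $c\in\Gamma_+(b)$ it produces the nonzero weight vector $v_c=v_+\Phi_{i_1}\cdots\Phi_{i_m}$ along a path from $b$ that never reverses $b_1,b_2$. These $r!/2$ vectors have distinct weights, and $\dim_k L^+(b)=r!/2$ by \eqref{Lpm dim}, so they form a basis. The paper therefore lets the same intertwiner paths serve both purposes and needs only the dimension of the induced module. Your route needs the Bernstein commutation bookkeeping, but it identifies the weights without using connectivity.

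\emph{Nonvanishing of the $e$-part.} You use the semisimplicity of $\msH_0$, the $q$-Young rule $e\msH_0\cong S^{(r)}\oplus S^{(r-1,1)}$, Frobenius reciprocity, and the fact that $T_1$ has eigenvalue $-1$ on $S^{(r-1,1)}$. These facts are transported from $\mathfrak S_r$ and deserve a reference, e.g.\ Tits deformation. For the eigenvalue you can also argue directly: if $T_1$ acted by $q$, then so would every $T_i$, since they are conjugate in $\msH_0$.

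The paper instead computes explicitly. It shows $v_+e=\frac{1+q}{[r-1]_q!}\sum_{d\in D}v_+T_d$. With $\tau=s_2\cdots s_{r-1}$, it shows $(v_-T_\tau)e=\frac{1}{[r-1]_q!}\sum_{u\in W_{r-1}}v_-T_{\tau u}$, a sum of distinct basis vectors of the induced module. This is self-contained and uses no semisimplicity or Specht-module theory.

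Your argument is more conceptual and explains why the idempotent $e_\delta$ with $\delta=(r-1,1)$, and hence $n\geq 2$, is needed. By Frobenius reciprocity, $\operatorname{sgn}\otimes_{\langle T_1\rangle}\msH_0$ has no trivial constituent. So the idempotent attached to $(r)$ would annihilate $L^-(b)$.
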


\begin{proof}
We first prove the simplicity assertion in (1). Assume
$
b_2=qb_1$
and set
\[
\Gamma_+(b)=
\left\{
b\cdot w:
\text{$b_1$ occurs to the left of $b_2$ in $b\cdot w$}
\right\}.
\]
Since the entries of $b$ are pairwise distinct,
$
|\Gamma_+(b)|=\frac{r!}{2}.$

Let
$
v_+=1\otimes1\in L^+(b).$
For each $c\in\Gamma_+(b)$, one can move from $b$ to $c$ by a sequence of
adjacent transpositions
$
s_{i_1},\ldots,s_{i_m}$
which never reverse the relative order of $b_1$ and $b_2$.  Since $\{b_1,b_2\}$ is the unique
$q$-linked pair, every adjacent pair interchanged along such a path
is not $q$-linked. Hence the corresponding intertwiners are invertible
on the relevant weight spaces. We therefore obtain a nonzero vector
\[
 v_c:=v_+\Phi_{i_1}\cdots\Phi_{i_m}\in L^+(b)
\]
of $\msP$-weight $c$.

The weights in $\Gamma_+(b)$ are pairwise distinct, so the vectors
$v_c$ are linearly independent. By \eqref{Lpm dim} we have
\begin{equation}\label{Lplus-weight-decomp}
L^+(b)
=
\bigoplus_{c\in\Gamma_+(b)}kv_c.
\end{equation}
In particular, all these weight spaces are one-dimensional.

Let $
0\neq N\subseteq L^+(b)$
be an $\msH$-submodule. Since $\msP\subseteq\msH$, the subspace
$N$ is $\msP$-stable. As $k$ is algebraically closed and
$X_1,\ldots,X_r$ act on the finite-dimensional space $N$ as
commuting linear operators, they have a common eigenvector
$
0\neq w\in N.$
Thus $w$ is an $\msP$-weight vector. By
\eqref{Lplus-weight-decomp} we conclude that
$
w\in kv_c$
for some $c\in\Gamma_+(b)$, and therefore
$
v_c\in N.$ The set $\Gamma_+(b)$ is connected by adjacent
transpositions which do not reverse the relative order of $b_1$ and
$b_2$. Applying the corresponding intertwiners shows that
$
v_d\in N
\
\text{for every }d\in\Gamma_+(b).$
Hence
$
N=L^+(b),$
and therefore $L^+(b)$ is simple.

The same argument proves that $L^-(b)$ is simple when
$b_1=qb_2$, using
\[
\Gamma_-(b)=
\left\{
b\cdot w:
\text{$b_1$ occurs to the left of $b_2$ in $b\cdot w$}
\right\}.
\]

It remains to prove
\[
L^+(b)e\neq0
\qquad\text{and}\qquad
L^-(b)e\neq0
\]
in the respective cases.

Since $\{b_1,b_2\}$ is assumed to be a $q$-linked pair, we necessarily
have $r\geq2$.
If $r=2$, then
$
e=e_{(1,1)}=1.$ Thus the assertion is immediate. We may therefore assume that
$
r\ge3.$

We first consider $L^+(b)$, where
$
b_2=qb_1.$
Let $
\msH_2$ be the subalgebra of $\msH$ generated by $T_1.$
Then we have
\begin{equation*}
L^+(b)|_{\msH_0}
\cong
\mathbf1\otimes_{\msH_2}\msH_0,
\end{equation*}
as right $\msH_0$-modules,
where $\mathbf1$ is the one-dimensional right $\msH_2$-module on
which
$
T_1$
acts as multiplication by $q$.

Let
$
W_{r-1}=\langle s_1,\ldots,s_{r-2}\rangle$, $W_2=\langle s_1\rangle,$
and let $D$ be the set of minimal-length representatives of the
right cosets $W_2d$
in $W_{r-1}$. Let
$
v_+=1\otimes1\in \mathbf1\otimes_{\msH_2}\msH_0.$  Then by \eqref{ela} and \eqref{the element e} we have
\[ v_+e=
\frac{1}{[r-1]_q!}
v_+x_{(r-1,1)}
  = \frac{1}{[r-1]_q!}
\sum_{d\in D}\sum_{u\in W_2}v_+T_uT_d \notag =\frac{1}{[r-1]_q!}
(1+q)\sum_{d\in D}v_+T_d.
\]
Hence, since the vectors
$
v_+T_d,
\ d\in D,$
are distinct basis vectors of $\mathbf1\otimes_{\msH_2}\msH_0$, we have $v_+e\not=0$. Thus
$
L^+(b)e\neq0.$

We next consider $L^-(b)$, where
$
b_1=qb_2.$
Then we have
\begin{equation*}
L^-(b)|_{\msH_0}
\cong
\operatorname{sgn}\otimes_{\msH_2}\msH_0
\end{equation*}
as right $\msH_0$-modules, where $\operatorname{sgn}$ is the
one-dimensional right $\msH_2$-module on which
$
T_1$
acts as multiplication by $-1$.

 Let
$
v_-=1\otimes1\in \operatorname{sgn}\otimes_{\msH_2}\msH_0.$
Set
$
\tau=s_2s_3\cdots s_{r-1}.$
Since
$
\tau(1)<\tau(2)<\cdots<\tau(r-1)$,
$\tau$ is the minimal-length representative of
$
\tau W_{r-1}.$ Hence by \eqref{ela} and \eqref{the element e} we have
\begin{align}
(v_-T_\tau) e=
\frac{1}{[r-1]_q!}
(v_-T_\tau)x_{(r-1,1)}=\frac{1}{[r-1]_q!}
\sum_{u\in W_{r-1}}v_-T_\tau T_u \notag =\frac{1}{[r-1]_q!}
\sum_{u\in W_{r-1}}v_-T_{\tau u}.
\end{align}
For $u\in W_{r-1}$ we have
$
(\tau u)^{-1}(1)
=
u^{-1}(1)
\leq r-1
<
r
=
(\tau u)^{-1}(2).
$, and hence
 the element $\tau u$ is
the minimal-length representative of
$
W_2(\tau u).$
Therefore
the vectors
$
v_-T_{\tau u},
\
u\in W_{r-1},$
are distinct  basis vectors of $\operatorname{sgn}\otimes_{\msH_2}\msH_0$.
It follows that $(v_-T_\tau) e\not=0$.
Consequently, $
L^-(b)e\neq0.$
\end{proof}

We next relate these two simple modules to the original
principal-series module. In the presence of a unique $q$-linked
pair, they occur as its two composition factors.
\begin{lemma}\label{one-link-series}
Let
$
b=(b_1,\ldots,b_r)\in(k^\times)^r$
have pairwise distinct entries,
and suppose that $\{1,2\}$ is the unique unordered pair of distinct indices $\{i,j\}$ such that $b_i$ and $b_j$ are $q$-linked.
\begin{enumerate}
\item[(1)]
If $b_2=qb_1$, then there is a short exact sequence
\begin{equation}\label{ses plus}
0\longrightarrow L^-(b\cdot s_1)
\longrightarrow I(b)
\longrightarrow L^+(b)
\longrightarrow0.
\end{equation}

\item[(2)]
If $b_1=qb_2$, then there is a short exact sequence
\begin{equation}\label{ses minus}
0\longrightarrow L^+(b\cdot s_1)
\longrightarrow I(b)
\longrightarrow L^-(b)
\longrightarrow0.
\end{equation}
\end{enumerate}
\end{lemma}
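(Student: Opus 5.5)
We prove (1) first; (2) is symmetric, with the roles of $q$ and $-1$ for $T_1$ interchanged. Let $b_2=qb_1$ and $v_b=1\otimes1\in I(b)$. The surjection $I(b)\to L^+(b)$ should come from the fact that $k_b^+|_{\msP}=k_b$. Frobenius reciprocity for $I(b)=k_b\otimes_{\msP}\msH$ then gives an $\msH$-map
\[
\pi:I(b)\to L^+(b),\qquad v_b\mapsto v_+.
\]
Since $L^+(b)$ is generated by $v_+$, the map $\pi$ is surjective. As $\dim_k I(b)=r!$ and $\dim_k L^+(b)=r!/2$ by \eqref{Lpm dim}, the kernel $K$ has dimension $r!/2$.

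To identify $K$, we produce a vector in $I(b)$ on which $T_1$ acts by $-1$ and $\msP$ acts by the character $b\cdot s_1$. The natural candidate is
\[
u=v_b\bigl(T_1-q\bigr).
\]
In $I(b)$ the element $v_b(T_1-q)$ should be a multiple of $v_b\Phi_1$ (up to a correction from $v_b$ itself), since $b_1-qb_2\ne0$ but $b_2-qb_1=0$. Concretely, $v_b\Phi_1=(b_1-b_2)v_bT_1+(q-1)b_2v_b$. Using $b_2=qb_1$, this equals $(1-q)b_1\,v_b(T_1-q)$, so $u$ is nonzero and proportional to $v_b\Phi_1$.

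By \eqref{Phi-intertwining}, $u$ has $\msP$-weight $b\cdot s_1=(b_2,b_1,b_3,\dots)$. The quadratic relation $(T_1+1)(T_1-q)=0$ gives $uT_1=-u$. The weight $b\cdot s_1$ satisfies $c_1=qc_2$, so $u$ spans a copy of the one-dimensional $\msH^{(1)}$-module $k^-_{b\cdot s_1}$. Frobenius reciprocity for $L^-(b\cdot s_1)=k^-_{b\cdot s_1}\otimes_{\msH^{(1)}}\msH$ then gives a nonzero map
\[
\iota:L^-(b\cdot s_1)\to I(b),\qquad 1\otimes1\mapsto u.
\]
Since $\pi(u)=v_+(T_1-q)=0$, the image of $\iota$ lies in $K$.

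Now $b\cdot s_1$ has pairwise distinct entries and $\{1,2\}$ is still its unique $q$-linked pair, so Lemma~\ref{Lpm simple}(2) applies and $L^-(b\cdot s_1)$ is simple. A nonzero map out of a simple module is injective, so $\iota$ is injective. Comparing dimensions, $\dim L^-(b\cdot s_1)=r!/2=\dim K$, hence $\iota$ is an isomorphism onto $K$ and \eqref{ses plus} is exact.

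For (2), with $b_1=qb_2$, we use the surjection $I(b)\to L^-(b)$ and the vector $u=v_b(T_1+1)$, which satisfies $uT_1=qu$ and has weight $b\cdot s_1$; the rest is identical. The main obstacle is checking that $u\ne0$, i.e.\ that $v_b$ and $v_bT_1$ are linearly independent in $I(b)$. This follows from the PBW-type basis $\msH=\bigoplus_w \msP T_w$ ordered as $\msH=\msP\otimes\msH_0$, which makes $I(b)|_{\msH_0}$ the regular module. The required relation $X\cdot T_1$ commutation is the Bernstein relation $T_1X_1T_1=qX_2$.
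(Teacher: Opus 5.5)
Your proposal is correct and follows essentially the same route as the paper. Both proofs use the surjection $\pi:I(b)\to L^+(b)$ sending $1\otimes1\mapsto1\otimes1$, and the vector $u=v_b(T_1-q)$ (resp.\ $v_b(T_1+1)$) generating a copy of $L^-(b\cdot s_1)$ (resp.\ $L^+(b\cdot s_1)$). Injectivity comes from the simplicity in Lemma~\ref{Lpm simple}, $u\neq0$ comes from $I(b)|_{\msH_0}\cong\msH_0$, and a dimension count finishes. The one addition is your identity $v_b\Phi_1=(1-q)b_1\,u$, which explicitly justifies the $\msP$-weight of $u$; the paper only asserts that weight.
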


\begin{proof}
Assume first that
$
b_2=qb_1.$
Put
$
w=1\otimes1\in I(b),
\
u=w(T_1-q).$
Then we have
\[
uX_1=b_2u,\qquad
uX_2=b_1u,\qquad
uX_j=b_ju\quad(j\neq1,2),\quad u(T_1+1)=0.
\]
Thus there is  an $\msH$-module homomorphism
$
\iota:
L^-(b\cdot s_1)\rightarrow I(b).$
such that $
\iota(1\otimes 1)=u
 $.
Since $
I(b)|_{\msH_0}\cong\msH_0$
and $T_1-q\neq0$ in $\msH_0$, we have $u\neq0$. Hence
$\iota\neq0$. By Lemma~\ref{Lpm simple},
$L^-(b\cdot s_1)$ is simple, so $\iota$ is injective.
On the other hand,   there is a
surjective $\msH$-module homomorphism
$
\pi:I(b)\twoheadrightarrow L^+(b)$
such that $\pi(w)=1\ot 1$. Clearly we have
$
\operatorname{Im}\iota
\subseteq
\ker\pi.$
Moreover,
$
\dim_k\operatorname{Im}\iota
=
\dim_kL^-(b\cdot s_1)
=
\frac{r!}{2},
$
whereas
$
\dim_k\ker\pi
=
\dim_kI(b)-\dim_kL^+(b)
=
r!-\frac{r!}{2}
=
\frac{r!}{2}.
$
Thus
$
\operatorname{Im}\iota=\ker\pi,$
and consequently
\[
0\longrightarrow L^-(b\cdot s_1)
\overset{\iota}{\longrightarrow}
I(b)
\overset{\pi}{\longrightarrow}
L^+(b)
\longrightarrow0
\]
is exact. This proves \eqref{ses plus}.

Now assume that
$
b_1=qb_2.$
Put $
w=1\otimes1\in I(b)$ and $
u'=w(T_1+1).$
Then we have
\[
u'X_1=b_2u',\qquad
u'X_2=b_1u',\qquad
u'X_j=b_ju'\quad(j\neq1,2),\quad u'T_1
=
qu'.
\]
Thus
there is an $\msH$-module homomorphism
$
\iota':
L^+(b\cdot s_1)\longrightarrow I(b) $
such that $\iota'(
1\otimes 1)-u'$.
Since
$
I(b)|_{\msH_0}\cong\msH_0$
and $T_1+1\neq0$ in $\msH_0$, we have $u'\neq0$. Hence
$\iota'\neq0$. By Lemma~\ref{Lpm simple},
$L^+(b\cdot s_1)$ is simple, so $\iota'$ is injective.
On the other hand,   there is a
surjective $\msH$-module homomorphism
$
\pi':I(b)\twoheadrightarrow L^-(b)$
such that
$
\pi'(w)=1\ot 1.$
Clearly we have
$
\operatorname{Im}\iota'
\subseteq
\ker\pi'.$
Moreover,
$
\dim_k\operatorname{Im}\iota'
=
\dim_kL^+(b\cdot s_1)
=
\frac{r!}{2},
$
whereas
$
\dim_k\ker\pi'
=
\dim_kI(b)-\dim_kL^-(b)
=
r!-\frac{r!}{2}
=
\frac{r!}{2}.
$
Hence
$
\operatorname{Im}\iota'=\ker\pi',$
and therefore
\[
0\longrightarrow L^+(b\cdot s_1)
\overset{\iota'}{\longrightarrow}
I(b)
\overset{\pi'}{\longrightarrow}
L^-(b)
\longrightarrow0
\]
is exact. This proves \eqref{ses minus}.
\end{proof}
Combining
Lemmas~\ref{Lpm simple} and \ref{one-link-series}, we obtain the following result for a principal-series module whose parameters are pairwise distinct and
whose unique $q$-linked pair occurs in the first two positions.

\begin{lemma}\label{one-link-composition-factors}
Let
$b=(b_1,\ldots,b_r)\in(k^\times)^r.$
Assume that the entries of $b$ are pairwise distinct,
and that $\{1,2\}$ is the unique
unordered pair of distinct indices $\{i,j\}$ such that $b_i$ and
$b_j$ are $q$-linked.
Then every simple composition factor $L$ of $I(b)$ satisfies
$Le\neq0.$
\end{lemma}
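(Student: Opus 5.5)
We split into the two orientations of the linked pair. Since $b_1$ and $b_2$ are $q$-linked and $q\neq 1$ (as $q$ is not a root of unity), exactly one of $b_2=qb_1$ or $b_1=qb_2$ holds. In each case we use the corresponding short exact sequence of Lemma~\ref{one-link-series}. This gives a composition series of $I(b)$ of length two, provided both end terms are simple. By the Jordan--Hölder theorem (everything is finite dimensional over $k$), the simple composition factors of $I(b)$ are then exactly those two modules.

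In case $b_2=qb_1$, the sequence \eqref{ses plus} has factors $L^+(b)$ and $L^-(b\cdot s_1)$.
\begin{itemize}
\item $L^+(b)$ is simple with $L^+(b)e\neq0$ by Lemma~\ref{Lpm simple}(1).
\item For $L^-(b\cdot s_1)$ we apply Lemma~\ref{Lpm simple}(2) to $c=b\cdot s_1=(b_2,b_1,b_3,\ldots,b_r)$. We must check its hypotheses: the entries of $c$ are pairwise distinct; the unique $q$-linked pair of $c$ is again $\{1,2\}$, since permuting entries permutes linked pairs; and $c_1=b_2=qb_1=qc_2$. So $L^-(c)$ is simple with $L^-(c)e\neq0$.
\end{itemize}

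In case $b_1=qb_2$, the sequence \eqref{ses minus} has factors $L^-(b)$ and $L^+(b\cdot s_1)$.
\begin{itemize}
\item $L^-(b)$ is simple with $L^-(b)e\neq0$ by Lemma~\ref{Lpm simple}(2).
\item For $c=b\cdot s_1$ we have $c_2=b_1=qb_2=qc_1$, with the same distinctness and unique-link properties. So Lemma~\ref{Lpm simple}(1) gives that $L^+(c)$ is simple with $L^+(c)e\neq0$.
\end{itemize}

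To conclude: any simple composition factor $L$ of $I(b)$ is isomorphic to one of these two modules. Since $Le\neq0$ is invariant under isomorphism, $Le\neq0$.

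The only real obstacle is verifying that $b\cdot s_1$ still satisfies the hypotheses of Lemma~\ref{Lpm simple} with the correct orientation. This is the bookkeeping above, using the convention $(b\cdot w)_j=b_{w(j)}$.
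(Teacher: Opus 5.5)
Your proposal is correct and follows the paper's proof: the same case split on the orientation of the linked pair, the short exact sequences of Lemma~\ref{one-link-series}, and Lemma~\ref{Lpm simple} applied to both end terms. Your explicit check that $b\cdot s_1=(b_2,b_1,b_3,\ldots,b_r)$ satisfies the hypotheses of the opposite part of Lemma~\ref{Lpm simple} is something the paper leaves implicit, and it is correct. One small point: exclusivity of the two orientations needs $q^2\neq1$ rather than just $q\neq1$, but your case split does not use exclusivity, so nothing is affected.
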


\begin{proof}
Since $b_1$ and $b_2$ are $q$-linked, either
$b_2=qb_1$ or $b_1=qb_2.$
Suppose first that
$b_2=qb_1.$  By
Lemma~\ref{one-link-series} there is a short exact sequence
$$
0\rightarrow L^-(b\cdot s_1)
\rightarrow I(b)
\rightarrow L^+(b)
\rightarrow0.$$
By Lemma~\ref{Lpm simple}, both $L^-(b\cdot s_1)$ and $L^+(b)$ are simple, and $L^-(b\cdot s_1)e\neq0$,
$L^+(b)e\neq0.$ Hence every simple composition factor $L$ of $I(b)$ satisfies
$Le\neq0$.

Suppose now that
$b_1=qb_2$. By
Lemma~\ref{one-link-series} there is a short exact sequence
$$
0\rightarrow L^+(b\cdot s_1)
\rightarrow I(b)
\rightarrow L^-(b)
\rightarrow0.$$
Again by Lemma~\ref{Lpm simple},
both $L^+(b\cdot s_1)$ and
$L^-(b)$ are simple, and
$
L^+(b\cdot s_1)e\neq0$, $L^-(b)e\neq0.$ Thus every simple composition factor $L$ of $I(b)$ satisfies
$Le\neq0.$
\end{proof}

We now return to the fixed primes
$\mathfrak p\in\Spec\msZ$, $\operatorname{ht}\mathfrak p=1,$
and
$\mathfrak q\in\Spec\msP$, $\mathfrak q\cap\msZ=\mathfrak p$, $\operatorname{ht}\mathfrak q=1.$
Recall that
$k=\overline{\Frac(\msP/\mathfrak q)}$.
Let
\begin{equation}\label{the element a}
a=(a_1,\ldots,a_r)\in(k^\times)^r
\end{equation}
be the point determined by $\mathfrak q$,
where $a_i\in k^\times$ is the image of $X_i$ under the natural map
\[
\msP\longrightarrow \msP/\mathfrak q
\longrightarrow \Frac(\msP/\mathfrak q)
\longrightarrow k.
\]

We now apply the preceding principal-series results to
the parameter $a$ determined by $\mathfrak q$.
This gives the principal-series statement needed
to prove $\msH_{\mathfrak p}e\msH_{\mathfrak p}=
\msH_{\mathfrak p}$ for height-one primes $\mathfrak p$.

\begin{lemma}\label{principal series main}
Every simple composition factor $L$ of $I(a)$ satisfies
$
Le\neq0.$
\end{lemma}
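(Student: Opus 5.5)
The plan is to split into cases according to the relations among the coordinates of $a$. The point is that $\mathfrak q=(F)$ is a height-one prime of the UFD $\msP$ (Lemma~\ref{height one q link}). Moreover, any relation $a_i=ca_j$ with $c\in\mpk^\times$ forces $X_i-cX_j\in\mathfrak q$, hence $F\sim X_i-cX_j$. Here we use that $\kappa(\mathfrak q)\subseteq k$, so an equality in $k$ between images of elements of $\msP$ already holds in $\kappa(\mathfrak q)$. Since the Laurent polynomials $X_i-cX_j$ for different unordered pairs $\{i,j\}$, or for different constants $c\in\{1,q,q^{-1}\}$, are pairwise non-associate, at most one such relation can hold. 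This uses that $q\neq1$ and $q^2\neq1$, because $q$ is not a root of unity. Hence exactly one of the following occurs:
\begin{enumerate}
\item[(i)] no two coordinates of $a$ are $q$-linked;
\item[(ii)] the coordinates of $a$ are pairwise distinct and there is exactly one $q$-linked pair $\{i,j\}$.
\end{enumerate}
In case (i), $a$ may have a repeated entry $a_i=a_j$, but this does not matter.

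In case (i), Lemma~\ref{CP charzero} applies directly, since its hypothesis only excludes $q$-links and not equal entries. So $I(a)$ is simple, and its only composition factor is $I(a)$ itself. Restricting to $\msH_0$, we have $I(a)|_{\msH_0}\cong \msH_0$ (free of rank one), because $\msH=\msP\otimes\msH_0$ as a $\msP$-$\msH_0$-bimodule. Since $e$ is a nonzero idempotent of $\msH_0$, we get $I(a)e\cong \msH_0e\neq0$.

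In case (ii), write the linked pair as $\{i,j\}$ with $i<j$. I will move the pair to positions $1,2$ by a sequence of adjacent transpositions, none of which interchanges the two linked entries with each other. Concretely, first move the entry in position $j$ leftwards until it sits just right of the entry in position $i$; then move the linked pair, as a block of two, leftwards to positions $1,2$. Along this path each swap exchanges two adjacent entries that are not $q$-linked, because $\{i,j\}$ is the unique linked pair. Lemma~\ref{neighbor-iso} then gives $I(a)\cong I(a\cdot w)$ for the resulting $w\in\mathfrak S_r$. The new parameter $a\cdot w$ still has pairwise distinct entries, and its unique $q$-linked pair is $\{1,2\}$. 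Lemma~\ref{one-link-composition-factors} shows that every composition factor $L$ of $I(a\cdot w)$ satisfies $Le\neq0$. Since $I(a)$ is finite dimensional (of dimension $r!$), the Jordan--H\"older theorem says isomorphic modules have the same composition factors up to isomorphism, so the claim transfers to $I(a)$.

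The main obstacle is the case analysis of the first paragraph. One has to check carefully that a height-one prime cannot simultaneously impose a $q$-link and an equality, or two distinct relations. Lemma~\ref{height one q link} only addresses $q$-links, so it must be supplemented by the analogous irreducibility argument for $X_i-X_j$ and for mixed pairs. Once this is settled, both cases reduce immediately to the lemmas already proved.
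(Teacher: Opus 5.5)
Your proposal is correct and follows essentially the same route as the paper. The paper uses the same case split coming from the height-one prime $\mathfrak q=(F)$. In the unlinked case it combines Lemma~\ref{CP charzero} with $I(a)|_{\msH_0}\cong\msH_0$, and in the one-link case it applies Lemma~\ref{neighbor-iso} and then Lemma~\ref{one-link-composition-factors}. The only difference is cosmetic: you exclude a simultaneous equality and $q$-link in one step by comparing all the irreducibles $X_i-cX_j$ with $c\in\{1,q,q^{-1}\}$. The paper instead first cites Lemma~\ref{height one q link} and then proves pairwise distinctness separately by the same associate argument.
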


\begin{proof}
By Lemma~\ref{height one q link},
there exists at most one unordered pair of distinct indices
$\{i,j\}$ such that $a_i$ and $a_j$ are $q$-linked.
Suppose first that there is no $q$-linked pair. By
Lemma~\ref{CP charzero}, the principal-series module $I(a)$ is
simple. Moreover,
$
I(a)|_{\msH_0}\cong\msH_0$
as right $\msH_0$-modules. Hence
$
I(a)e\cong\msH_0e\neq0,$
and the assertion follows.

Suppose now that there exists exactly one unordered pair of distinct indices $\{i,j\}$ such that $a_i$ and $a_j$ are $q$-linked.  We first show that the entries
$a_1,\ldots,a_r$ are pairwise distinct. Write the unique $q$-link as
$
a_i=q^\varepsilon a_j,
\
\varepsilon\in\{1,-1\}.$
Then
$
X_i-q^\varepsilon X_j\in\mathfrak q.
$
If
$
a_s=a_t
\ (s\neq t),$
then
$
X_s-X_t\in\mathfrak q.$
By the proof of Lemma~\ref{height one q link},
$
\mathfrak q=(F)
$
for some irreducible $F\in\msP$. Since
$
X_s-X_t
\ \text{and}\
X_i-q^\varepsilon X_j
$
are irreducible elements of $\mathfrak q$, they are associate in
$\msP$. Hence
$
\{s,t\}=\{i,j\}.$
Thus $a_i=a_j$, and together with
$
a_i=q^\varepsilon a_j
$
this gives
$
q^\varepsilon=1,$
contrary to the assumption that $q$ is not a root of unity.
Therefore
$
a_1,\ldots,a_r$
are pairwise distinct.

Let $a_i,a_j$ be the unique $q$-linked pair. By repeatedly applying
Lemma~\ref{neighbor-iso} along adjacent transpositions which do
not interchange these two entries, we may choose
$
w\in\mathfrak S_r$
such that, for
$
b=a\cdot w,$
the linked pair occupies the first two positions and
$
I(a)\cong I(b).$
The entries of $b$ are pairwise distinct,
and $\{1,2\}$ is the unique
unordered pair of distinct indices $\{i,j\}$ such that $b_i$ and
$b_j$ are $q$-linked.
By Lemma~\ref{one-link-composition-factors}, every simple composition
factor $M$ of $I(b)$ satisfies
$Me\neq0.$
Since
$I(a)\cong I(b),$
the same is true for every simple composition factor $L$ of $I(a)$.
Therefore
$Le\neq0.$
\end{proof}

\subsection{Height-one localization}
We now apply the preceding principal-series analysis to the
height-one fibers of $\msH$. By Lemma~\ref{principal series main},
assuming that
$\msH_{\mathfrak p}e\msH_{\mathfrak p}
\neq
\msH_{\mathfrak p}$
for a height-one prime $\mathfrak p\in\Spec\msZ$ leads to a contradiction.

\begin{proposition} \label{prop:height-one-fullness}
For every
$\mathfrak p\in\Spec \msZ, \
\operatorname{ht}\mathfrak p=1,$
one has
$
\msH_{\mathfrak p}e\msH_{\mathfrak p}=\msH_{\mathfrak p}
$
where $e$ is defined in \eqref{the element e}.
\end{proposition}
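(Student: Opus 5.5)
We argue by contradiction, following the strategy announced at the start of this subsection. Put $J=\msH_{\mathfrak p}e\msH_{\mathfrak p}$, a two-sided ideal of $\msH_{\mathfrak p}$, and suppose $J\neq\msH_{\mathfrak p}$. By Lemma~\ref{H is finite rank}, $\msH_{\mathfrak p}$ is a finitely generated module over the local ring $\msZ_{\mathfrak p}$. Hence the quotient $M=\msH_{\mathfrak p}/J$ is a nonzero finitely generated $\msZ_{\mathfrak p}$-module. By Nakayama's lemma, $M/\mathfrak pM\neq0$.

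Let $\kappa(\mathfrak p)=\msZ_{\mathfrak p}/\mathfrak p\msZ_{\mathfrak p}$. Then $\bar M:=M/\mathfrak p M$ is a nonzero quotient of the finite-dimensional $\kappa(\mathfrak p)$-algebra $\bar\msH=\msH_{\mathfrak p}/\mathfrak p\msH_{\mathfrak p}$, and it satisfies $\bar M e=0$, since $e\in J$ and $J$ is two-sided. Choose a simple quotient $L_0$ of $\bar M$ as a right $\bar\msH$-module; then $L_0e=0$.

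Next we pass to $k=\overline{\Frac(\msP/\mathfrak q)}$. The inclusion $\msZ\to\msP\to\msP/\mathfrak q$ sends $\mathfrak p$ to $0$, so it gives a field embedding $\kappa(\mathfrak p)\hookrightarrow k$. Extending scalars, $L_0\otimes_{\kappa(\mathfrak p)}k$ is a nonzero finite-dimensional $k\otimes_{\mpk}\msH$-module on which $e$ acts as zero and on which the centre $\msZ$ acts through the character $\chi_a:\msZ\to k$ given by evaluation at $a$. Let $L$ be any simple subquotient of it; then $Le=0$ as well.

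It remains to see that $L$ is a composition factor of $I(a)$, which contradicts Lemma~\ref{principal series main}. By the Bernstein centre theorem $\msZ$ acts on $L$ by a scalar character, and here that character is $\chi_a$. Since $\msP$ is commutative and $L$ is finite-dimensional over the algebraically closed field $k$, $L$ contains a common eigenvector for $\msP$, say of weight $c$. The restriction of $\chi_c$ to $\msZ=\msP^{\mathfrak S_r}$ equals $\chi_a$. Because $\msP$ is integral over $\msZ$, symmetric functions separate $\mathfrak S_r$-orbits, so $c=a\cdot w$ for some $w\in\mathfrak S_r$.

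Frobenius reciprocity, applied to $I(c)=k_c\otimes_{\msP}\msH$, gives a nonzero map $I(c)\to L$; it is surjective because $L$ is simple, so $L$ is a quotient of $I(a\cdot w)$. Finally, $I(a\cdot w)$ and $I(a)$ have the same composition factors; this is standard for principal series, e.g.\ through the isomorphic semisimplifications given by their common $\msP$-character $\sum_{v\in\mathfrak S_r}[a\cdot v]$. Thus $L$ is a composition factor of $I(a)$ with $Le=0$, contradicting Lemma~\ref{principal series main}. Therefore $J=\msH_{\mathfrak p}$.

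The main obstacle is this last identification of $L$ with a composition factor of $I(a)$. The cleanest route is probably to note that every simple $\msH$-module with central character $\chi_a$ is a quotient of $I(a\cdot w)$ for some $w$, and then to invoke the equality of composition factors of $I(a\cdot w)$ and $I(a)$. That equality holds because both are composition factors of the finite-dimensional module $\msH\otimes_{\msZ}k_{\chi_a}$, which is filtered by the $I(a\cdot v)$. Alternatively, one can avoid $w$ altogether by applying Lemma~\ref{principal series main} to the prime $w(\mathfrak q)$, which also lies over $\mathfrak p$ and whose point is $a\cdot w$ up to relabelling; since all its hypotheses depend only on $\mathfrak p$, this yields the contradiction directly.
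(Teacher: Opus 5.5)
Your argument is correct if you take the alternative in your last sentence as the proof, and then it is essentially the paper's proof. The route you call ``cleanest'', however, has a gap.

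\textbf{The gap.} Equality of $\msP$-characters does not by itself show that $I(a\cdot w)$ and $I(a)$ have the same composition factors. That would need linear independence of the characters of simple $\msH_k$-modules, which is a genuine theorem you would have to cite. The filtration of $\msH\otimes_{\msZ}k_{\chi_a}$ by the $I(a\cdot v)$ does not help either: it only shows that $L$ occurs in some $I(a\cdot v)$, which Frobenius reciprocity already told you. The claim itself is true. For instance, $b\mapsto\operatorname{tr}(h|_{I(b)})$ is a Laurent polynomial that is $\mathfrak S_r$-invariant on the dense set where Lemma~\ref{neighbor-iso} applies, and in characteristic $0$ traces determine semisimplifications. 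But you do not need any of this.

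\textbf{How the paper closes it.} The paper avoids both this step and the orbit identification $c=a\cdot w$ by never fixing $\mathfrak q$ in advance. It takes $k$ to be an algebraic closure of $\kappa(\mathfrak p)$ and picks a $\msP$-weight vector in $L$ of weight $a$. It then defines $\mathfrak q=\ker(\chi_a\colon\msP\to k)$ and checks directly that $\mathfrak q\cap\msZ=\mathfrak p$, because $z\in\msZ$ acts on $L$ by its image in $\kappa(\mathfrak p)$. Lemma~\ref{height one q link} applies to this $\mathfrak q$, since it holds for every prime over $\mathfrak p$. Lemma~\ref{principal series main} then applies to $I(a)$, of which $L$ is a quotient. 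Your version with $w(\mathfrak q)$ is the same argument after relabelling, so drop the composition-factor comparison and make that the proof.

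(A minor point: the reason $\chi_c|_{\msZ}=\chi_a|_{\msZ}$ forces $c\in a\cdot\mathfrak S_r$ is that the elementary symmetric functions determine $\prod_i(t-c_i)$. Integrality of $\msP$ over $\msZ$ is not the reason. This step disappears in the paper's route anyway.)
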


\begin{proof}
Suppose
$\msH_{\mathfrak p}e\msH_{\mathfrak p}\neq \msH_{\mathfrak p}.$
Set
$
A=\msH_{\mathfrak p}/\msH_{\mathfrak p}e\msH_{\mathfrak p}.
$
Then $A\neq0$.
Put
$\mathfrak m=\mathfrak p\msZ_{\mathfrak p}.$
Since $\msH$ is a finite generated $\msZ$-module, $A$ is a finite generated
$\msZ_{\mathfrak p}$-module. Hence by Nakayama's lemma we have
$
A/\mathfrak mA=A/\mathfrak pA\neq0.$
(cf. \cite[Prop. 2.6]{AM}).
Let
$
\kappa(\mathfrak p)
=
\msZ_{\mathfrak p}/\mathfrak m,$
and let $k$ be an algebraic closure of $\kappa(\mathfrak p)$. Set
\[
\overline A
=
k\otimes_{\kappa(\mathfrak p)}A/\mathfrak mA.
\]
Since $A/\mathfrak mA$ is a nonzero finite-dimensional
$\kappa(\mathfrak p)$-vector space, $\overline A$ is a nonzero
finite-dimensional $k$-algebra.

Since $k$ is an algebraic closure of $\kappa(\mathfrak p)$, the
natural composite
\[
\mpk\longrightarrow\msZ\longrightarrow
\msZ_{\mathfrak p}\longrightarrow
\kappa(\mathfrak p)\hookrightarrow k
\]
makes $k$ an extension field of $\mpk$.
 For clarity, in this proof we distinguish the original
$\mpk$-algebras from their scalar extensions to $k$ and write
\[
\msH_k=k\otimes_{\mpk}\msH,
\qquad
\msP_k=k\otimes_{\mpk}\msP.
\]
 These are the algebras denoted simply by $\msH$ and $\msP$
in the preceding principal-series discussion. We also continue to
write $e$ and $X_i$ for $1\otimes e\in\msH_k$ and
$1\otimes X_i\in\msP_k$, respectively.

 Consider the composite algebra homomorphism
\[
\psi_{\mathfrak p}:
\msH
\longrightarrow
\msH_{\mathfrak p}
\longrightarrow
A
\longrightarrow
A/\mathfrak mA,
\]
where the first map is localization and the other two maps are the
canonical quotient maps.
The homomorphism $\psi_{\mathfrak p}$ induces a $k$-algebra
homomorphism
$
\Theta:
\msH_k\longrightarrow\overline A,$
such that $\Theta(\lambda\otimes h)
=
\lambda\otimes\psi_{\mathfrak p}(h).$
 We regard $\overline A$ as a right $\msH_k$-module via
$\Theta$. Since $\overline A\neq0$ is finite-dimensional over $k$,
it has a simple right $\msH_k$-module quotient $L$.

 Since the image of $e$ in $A$ is zero, $e$ acts trivially on
$\overline A$ and hence also on $L$. Thus
\begin{equation}\label{Le zero}
Le=0.
\end{equation}
  Since $k$ is
algebraically closed,   there exist
$0\neq w\in L$
and
$a=(a_1,\ldots,a_r)\in(k^\times)^r$
such that
\begin{equation}\label{wX_i}
 wX_i=a_iw\  (1\leq i\leq r).
\end{equation}
We define a
$k$-algebra homomorphism
$
\chi_{a,k}:\msP_k\rightarrow k$,
such that
$\chi_{a,k}(X_i)= a_i
$ for $1\leq i\leq r$.
 Restricting $\chi_{a,k}$ to
$\msP=\mpk[X_1^{\pm1},\ldots,X_r^{\pm1}]$
 gives an $\mpk$-algebra homomorphism
\[
\chi_a:\msP\longrightarrow k,
\qquad
X_i\longmapsto a_i.
\]
Put
$
\mathfrak q=\ker\chi_a.$
Then
$
\mathfrak q\in\Spec\msP.$

We claim that
$
\mathfrak q\cap\msZ=\mathfrak p.$
Let
$
\rho:
\msZ_{\mathfrak p}
\rightarrow
\kappa(\mathfrak p)
=
\msZ_{\mathfrak p}/\mathfrak m$
be the natural quotient map. For $z\in\msZ$, write
$
\bar z=\rho(z/1)\in\kappa(\mathfrak p)\subseteq k.$
On $A/\mathfrak mA$, and hence on $\overline A$ and $L$, the element
$z$ acts as multiplication by $\bar z$. Therefore
by \eqref{wX_i} we have
$$\chi_a(z)w=wz=\bar zw.$$
It follows that
$
\chi_a(z)=\bar z.$
Thus $\mathfrak p\subseteq \mathfrak q\cap\msZ$.
Furthermore if $z\in\mathfrak q\cap\msZ$, then $\bar z=\chi_a(z)=0$. It follows that
$
z/1\in\mathfrak m=\mathfrak p\msZ_{\mathfrak p}.$
Hence there exists
$
s\in\msZ\setminus\mathfrak p$
such that
$
sz\in\mathfrak p.$
Since $\mathfrak p$ is prime and $s\notin\mathfrak p$, we obtain
$z\in\mathfrak p$. Therefore we have $
\mathfrak q\cap\msZ=\mathfrak p.$

Since
$
\mathfrak q=\ker\chi_a,$
the homomorphism $\chi_a$ induces an injection
$\msP/\mathfrak q\hookrightarrow k,$
and therefore an embedding
$
\kappa(\mathfrak q) =\Frac(\msP/\mathfrak q)\hookrightarrow k.$ By  Lemma~\ref{height one q link},
there exists at most one unordered pair of distinct indices
$\{i,j\}$ such that
$X_i=q^{\pm1}X_j$ in
$\kappa(\mathfrak q)
=\Frac(\msP/\mathfrak q)$.
Consequently, there exists at most one unordered pair of distinct
indices $\{i,j\}$ such that $a_i$ and $a_j$ are $q$-linked.

Let $k_a$ be the one-dimensional right $\msP_k$-module
afforded by the character $\chi_{a,k}$, and set
$I_k(a)
=
k_a\otimes_{\msP_k}\msH_k.$
Since $L$ is simple as a right $\msH_k$-module, by \eqref{wX_i}
there is a surjective homomorphism of right $\msH_k$-modules
\[
\vi:I_k(a)\twoheadrightarrow L
\]
such that $\vi(1\otimes 1)= w $.
Thus $L$ is a simple composition factor of $I_k(a)$.
By Lemma~\ref{principal series main}  we have
$
Le\neq0.$
This contradicts \eqref{Le zero}. Therefore
$
\msH_{\mathfrak p}e\msH_{\mathfrak p}
=
\msH_{\mathfrak p}.$
\end{proof}

\section{The canonical approximation and the double-centralizer property}
In this section we complete the proof of the double-centralizer property. The argument is divided into three parts. We first construct a canonical left $\add(\msT)$-approximation of $\msH$. The height-one localization result proved in Proposition \ref{prop:height-one-fullness} is then used to show that this approximation splits after localization at every height-one prime of the Bernstein center. Finally, we prove that the cokernel of this approximation embeds into a finite direct sum of copies of $\msT$. This allows us to apply the abstract criterion established in \S 4.

\subsection{Construction and height-one splitting}
We first construct the approximation required in the double-centralizer criterion. Theorem \ref{injection} and the finite generation of $\msT$ over the center give the required embedding into a finite direct sum of copies of $\msT$.
\begin{lemma}\label{first left approx}
There exists an injective homomorphism of right $\msH$-modules
\[
\ttf:\msH\longrightarrow \msT^{\oplus N}
\]
which is a left $\add(\msT)$-approximation.
\end{lemma}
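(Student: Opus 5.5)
The plan is to use Lemma~\ref{left approx}: an injective map $\ttf:\msH\to\msT^{\oplus N}$ with $\ttf(1)=(t_1,\ldots,t_N)$ is a left $\add(\msT)$-approximation exactly when $\msT=\sum_i\msS t_i$. Since $\Hom_{\msH}(\msH,\msT)\cong\msT$ via $\varphi\mapsto\varphi(1)$, any tuple $(t_1,\ldots,t_N)\in\msT^{\oplus N}$ defines $\ttf(h)=(t_1h,\ldots,t_Nh)$. So I only need a finite tuple of elements of $\msT$ that satisfies two conditions. First, the elements must generate $\msT$ as a left $\msS$-module, which gives the approximation property. Second, the right annihilator $\{h\in\msH: t_ih=0\ \forall i\}$ must be zero, which gives injectivity.

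\textbf{Generation.} By Lemma~\ref{T finite free over Z}, $\msT$ is finitely generated as a left $\msS$-module. This holds over an arbitrary commutative ring, since it only uses that $\msZ$ is central and acts through $\msS$. So choose $\msS$-generators $t_1,\ldots,t_m$ of $\msT$, for instance a finite $\msZ$-basis of $\msT$.

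\textbf{Injectivity.} I will enlarge the tuple by elements that jointly have zero annihilator in $\msH$, using Theorem~\ref{injection}. Via \eqref{T decom}, the summand $x_{(r,0,\ldots,0)}\msH$ of $\msT$ contains the element $x_{(r,0,\ldots,0)}$, so take $t_{m+1}$ to be the vector of $\msT$ corresponding to $\X$. The proof of Theorem~\ref{injection} gives more than faithfulness. If $h=\sum a_wT_w\ne0$, it produces $u$ with $\X T_uh\neq0$. This does not directly show $\X h\neq0$. However, if $\X h=0$, then for every $s\in\msS$ we have $(s\X)h=s(\X h)=0$, so $\msS\X\cdot h=0$.

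This is the main obstacle: I need $\msS\X$ to contain $\X T_u$ for all $u$, or at least to have zero annihilator. Cleanest is to use the generators instead. Since $t_1,\ldots,t_m$ generate $\msT$ over $\msS$, suppose $t_ih=0$ for all $i$. Then for every $s\in\msS$ we get $(st_i)h=s(t_ih)=0$, because the actions of $\msS$ and $\msH$ commute. Hence $\msT h=0$. In particular $\X\msH h=0$, which forces $h=0$ by Theorem~\ref{injection}, i.e.\ by the faithfulness of $\msH$ on $\msT$ (injectivity of $\xr$).

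So the generating tuple alone already works. Set $N=m$, $t_i$ the $\msS$-generators, and $\ttf(h)=(t_1h,\ldots,t_Nh)$. Then $\ttf$ is an $\msH$-module homomorphism. It is injective by the argument just given, and it is a left $\add(\msT)$-approximation by Lemma~\ref{left approx}.
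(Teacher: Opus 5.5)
Your proposal is correct and follows the paper's proof: both take finitely many generators $t_1,\ldots,t_N$ of $\msT$ (the paper uses $\msZ$-generators, which also generate $\msT$ over $\msS$ because $\msZ$ acts through $\msS$) and define $\ttf(h)=(t_1h,\ldots,t_Nh)$. Injectivity then comes from the faithfulness in Theorem~\ref{injection}, and the approximation property from Lemma~\ref{left approx}. Your explicit step that $t_ih=0$ for all $i$ forces $\msT h=0$, via the commuting left $\msS$-action, fills in what the paper leaves implicit; the detour through $\msS\X$ is unnecessary and can be dropped.
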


\begin{proof}
By Lemma \ref{T finite free over Z} we may
choose $\msZ$-generators
$
t_1,\ldots,t_N$
of $\msT$ and define
\begin{equation*}\label{f def}
\ttf:\msH\longrightarrow \msT^{\oplus N},
\qquad
\ttf(h)=(t_1h,\ldots,t_Nh)
\end{equation*}
for $h\in\msH$.
By Theorem \ref{injection} we conclude that $\ttf$ is
injective.
Since the $\msZ$-generators $t_1,\ldots,t_N$ also generate $\msT$
as an $\msS$-module, by Lemma~\ref{left approx} we conclude that $\ttf$ is
a left $\add(\msT)$-approximation.
\end{proof}

We next study the behavior of this approximation after localization. The key point is that the height-one localization property $\msH_{\mathfrak p}e\msH_{\mathfrak p}=\msH_{\mathfrak p}$ implies that
$\msH_{\mathfrak p}\in\add(\msT_{\mathfrak p})$. This makes the localized approximation split.
Let
$\mathfrak p\in\Spec\msZ$. By localization the map $\ttf$  induces a right $\msH_{\mathfrak p}$-module homomorphism
$\ttf_{\mathfrak p}:
\msH_{\mathfrak p}\longrightarrow
\msT_{\mathfrak p}^{\oplus N}$.

\begin{lemma}\label{height one splitting}
Let
$
\mathfrak p\in\Spec\msZ,
\
\operatorname{ht}\mathfrak p=1.
$
Then we have
$\msH_{\mathfrak p}\in\add(\msT_{\mathfrak p}),$
and   $\ttf_{\mathfrak p}:
\msH_{\mathfrak p}\longrightarrow
\msT_{\mathfrak p}^{\oplus N}$
is a left $\add(\msT_{\mathfrak p})$-approximation. Consequently,
$\ttf_{\mathfrak p}$ is a split monomorphism.
\end{lemma}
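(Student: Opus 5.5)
We plan to deduce everything from Proposition~\ref{prop:height-one-fullness} together with the decomposition \eqref{tensor-decomp}. Since $\delta\in\afLanr$, the module $e\msH$ is a direct summand of $\msT$. Localization at $\mathfrak p$ is exact and commutes with finite direct sums, so $e\msH_{\mathfrak p}$ is a direct summand of $\msT_{\mathfrak p}$. Here we regard $\msH_{\mathfrak p}=\msH\otimes_{\msZ}\msZ_{\mathfrak p}$, and $\msZ$ is central, so everything is an $\msH_{\mathfrak p}$-module.

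\emph{Step 1: $\msH_{\mathfrak p}\in\add(\msT_{\mathfrak p})$.} By Proposition~\ref{prop:height-one-fullness}, $\msH_{\mathfrak p}e\msH_{\mathfrak p}=\msH_{\mathfrak p}$. We write $1=\sum_{i=1}^m x_i e y_i$ with $x_i,y_i\in\msH_{\mathfrak p}$. Consider the right module map
\[
\phi:(e\msH_{\mathfrak p})^{\oplus m}\to\msH_{\mathfrak p},\qquad (eh_i)_i\mapsto\sum_i x_ieh_i .
\]
It is surjective, since $1=\phi((ey_i)_i)$. It splits via $\sigma(h)=(ey_ih)_i$, because $\phi\sigma(h)=\sum_i x_iey_ih=h$. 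Hence $\msH_{\mathfrak p}$ is a direct summand of $(e\msH_{\mathfrak p})^{\oplus m}$, and therefore lies in $\add(\msT_{\mathfrak p})$.

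\emph{Step 2: $\ttf_{\mathfrak p}$ is a left $\add(\msT_{\mathfrak p})$-approximation.} Note that $\ttf_{\mathfrak p}(1)=(t_1,\dots,t_N)$, and the $t_i$ generate $\msT_{\mathfrak p}$ over $\msZ_{\mathfrak p}$. The image of $\msZ_{\mathfrak p}$ acting on $\msT_{\mathfrak p}$ lies in $\End_{\msH_{\mathfrak p}}(\msT_{\mathfrak p})$, because $\msZ$ is central. So the $t_i$ generate $\msT_{\mathfrak p}$ over this endomorphism ring. Injectivity of $\ttf_{\mathfrak p}$ follows from exactness of localization and Lemma~\ref{first left approx}. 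Lemma~\ref{left approx}, applied to the $\msZ_{\mathfrak p}$-algebra $\msH_{\mathfrak p}$ and the module $\msT_{\mathfrak p}$, then gives the approximation property; its proof is purely formal and uses no Noetherian hypothesis.

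\emph{Step 3: splitting.} Taking $D=\msH_{\mathfrak p}$, which lies in $\add(\msT_{\mathfrak p})$ by Step 1, the approximation property makes $\Hom(\msT_{\mathfrak p}^{\oplus N},\msH_{\mathfrak p})\to\Hom(\msH_{\mathfrak p},\msH_{\mathfrak p})$ surjective. Hence $\id_{\msH_{\mathfrak p}}=g\circ\ttf_{\mathfrak p}$ for some $g$, so $\ttf_{\mathfrak p}$ is a split monomorphism.

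The only substantive input is Step 1, which is supplied by Proposition~\ref{prop:height-one-fullness}. The remaining care is routine: the Hom spaces and the approximation property should be formed over $\msH_{\mathfrak p}$. For Step 2 one should also check that $\msH_{\mathfrak p}$-linear maps out of $\msT_{\mathfrak p}$ agree with localized $\msH$-linear maps, but the argument as given never actually needs this identification.
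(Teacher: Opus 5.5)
Your proof is correct. Steps 1 and 3 are essentially the paper's argument: the same splitting comes from $1=\sum_i x_i e y_i$, and the same choice $D=\msH_{\mathfrak p}$ forces $\ttf_{\mathfrak p}$ to split.

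Step 2 takes a different route. The paper deduces the approximation property of $\ttf_{\mathfrak p}$ from that of $\ttf$ by clearing denominators. Given $\varphi:\msH_{\mathfrak p}\to\msT_{\mathfrak p}$, it writes $\varphi(1)=t/s$ and factors $h\mapsto th$ through $\ttf$ globally. It then localizes the resulting map and rescales by $1/s$. You instead re-apply the generation criterion of Lemma~\ref{left approx} directly to $\msH_{\mathfrak p}$ and $\msT_{\mathfrak p}$. For this you use two facts: the $t_i/1$ generate $\msT_{\mathfrak p}$ over $\msZ_{\mathfrak p}$, and $\msZ_{\mathfrak p}$, being central in $\msH_{\mathfrak p}$, acts through $\End_{\msH_{\mathfrak p}}(\msT_{\mathfrak p})$. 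Both checks are right. The lemma's standing hypotheses also hold here, since $\msZ_{\mathfrak p}$ is a Noetherian domain.

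Your version is slightly more self-contained: it needs only the $\msZ$-generation of $\msT$ and centrality, not the global approximation property of $\ttf$. The paper's version proves something more general: any left $\add(\msT)$-approximation out of $\msH$ stays one after central localization, however it was constructed. In both arguments only the case $D=\msT_{\mathfrak p}$ is verified; extending to all of $\add(\msT_{\mathfrak p})$ is formal.
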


\begin{proof}
By Proposition~\ref{prop:height-one-fullness} we have
$\msH_{\mathfrak p}e\msH_{\mathfrak p}=
\msH_{\mathfrak p}.$
Hence there exist
$a_1,\ldots,a_m,b_1,\ldots,b_m\in\msH_{\mathfrak p}$
such that
$1=\sum_{i=1}^m a_i e b_i.$
Define
\[
\gamma:
\msH_{\mathfrak p}
\longrightarrow
(e\msH_{\mathfrak p})^{\oplus m},
\qquad
\gamma(h)=(eb_1h,\ldots,eb_mh),
\]
for $h\in\msH_{\mathfrak p}$,
and
\[
\ga':
(e\msH_{\mathfrak p})^{\oplus m}
\longrightarrow
\msH_{\mathfrak p},
\qquad
\ga'(x_1,\ldots,x_m)=\sum_{i=1}^m a_ix_i,
\]
for $(x_1,\ldots,x_m)\in(e\msH_{\mathfrak p})^{\oplus m}$.
Since $1=\sum_{i=1}^m a_i e b_i$ we have
$
 \ga'\circ\gamma
=
id_{\msH_{\frak p}}.$
Thus
$
\msH_{\mathfrak p}\in\add(e\msH_{\mathfrak p}).
$
Since $n\geq 2$, we have $\delta\in\afLanr$, where $\delta$ is defined in \eqref{the element e}. Hence by \eqref{tensor-decomp}, $e\msH=e_\delta\msH$ is a direct summand of $\msT$. It follows that
$
e\msH_{\mathfrak p}\in\add(\msT_{\mathfrak p}),
$
and hence
\begin{equation}\label{Hp add Tp}
\msH_{\mathfrak p}\in\add(\msT_{\mathfrak p}).
\end{equation}

We now show that $\ttf_{\mathfrak p}$ is a left
$\add(\msT_{\mathfrak p})$-approximation. Let
$
\varphi:
\msH_{\mathfrak p}\longrightarrow\msT_{\mathfrak p}
$
be an $\msH_{\mathfrak p}$-homomorphism. Write
\[
\varphi(1)= t/s ,
\qquad
t\in\msT,\quad
s\in\msZ\setminus\mathfrak p.
\]
Define
\[
\varphi_t:\msH\longrightarrow\msT,
\qquad
\varphi_t(h)=th
\]
for $h\in\msH$.
Since $\ttf$ is a left $\add(\msT)$-approximation, there exists a right $\msH$-module homomorphism
\[
\ttg:\msT^{\oplus N}\longrightarrow\msT
\]
such that
$
\varphi_t=\ttg\circ \ttf.$
By localization, $\ttg$ induces
$
\ttg_{\mathfrak p}:
\msT_{\mathfrak p}^{\oplus N}\longrightarrow \msT_{\mathfrak p}$ and $\vi_t$ induces
$(\varphi_t)_{\mathfrak p}:\msH_{\mathfrak p}\longrightarrow\msT_{\mathfrak p}$.
Since $
\varphi_t=\ttg\circ \ttf$ we have
$
(\varphi_t)_{\mathfrak p}
=
\ttg_{\mathfrak p}\circ\ttf_{\mathfrak p}.$
Furthermore, since $s\in\msZ\setminus\mathfrak p$, $s/1$ is invertible in $\msZ_{\mathfrak p}$. Let
\[
\widetilde \ttg=(1/s) \ttg_{\mathfrak p}:\msT_{\mathfrak p}^{\oplus N}\longrightarrow \msT_{\mathfrak p}.
\]
Then
$
\varphi=\widetilde \ttg\circ \ttf_{\mathfrak p}.
$
Hence $\ttf_{\mathfrak p}$ is a left
$\add(\msT_{\mathfrak p})$-approximation.
Consequently,  by \eqref{Hp add Tp}, there exists
$
\tth_{\mathfrak p}:
\msT_{\mathfrak p}^{\oplus N}
\rightarrow
\msH_{\mathfrak p}$
such that
$
\tth_{\mathfrak p}\circ \ttf_{\mathfrak p}
=
\operatorname{id}_{\msH_{\mathfrak p}}.
$
Therefore $\ttf_{\mathfrak p}$ is a split monomorphism.
\end{proof}
The splitting of the localized approximation will be used to
control the cokernel of $\ttf$ at height-one primes. We introduce
this cokernel in the next subsection.
\subsection{Torsion-freeness of the cokernel}

Let
\begin{equation*}
\msC=\Coker(\ttf).
\end{equation*}
Then we have the following short exact sequence
\begin{equation}\label{main ses}
0\longrightarrow \msH
\xrightarrow{\,\ttf\,}
\msT^{\oplus N}
\xrightarrow{\,\pi\,}
\msC
\longrightarrow0
\end{equation}
where $\pi:\msT^{\oplus N}\rightarrow\msC$ is the canonical projection.
We first record the local consequence of the splitting result above.
\begin{lemma}\label{Cp projective}
Let
$
\mathfrak p\in\Spec\msZ,
\
\operatorname{ht}\mathfrak p=1.$
Then $\msC_{\mathfrak p}$ is a projective
$\msZ_{\mathfrak p}$-module. In particular,
$\msC_{\mathfrak p}$ is a torsion free   $\msZ_{\mathfrak p}$-module.
\end{lemma}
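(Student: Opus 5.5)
Our plan is to localize the short exact sequence \eqref{main ses} at $\mathfrak p$ and use the splitting from Lemma~\ref{height one splitting}. Localization at $\mathfrak p$ is exact. Since $\msZ$ is central in $\msH$ and acts compatibly on $\msT$, the localized sequence
\[
0\longrightarrow \msH_{\mathfrak p}\xrightarrow{\,\ttf_{\mathfrak p}\,}\msT_{\mathfrak p}^{\oplus N}\xrightarrow{\,\pi_{\mathfrak p}\,}\msC_{\mathfrak p}\longrightarrow0
\]
is an exact sequence of right $\msH_{\mathfrak p}$-modules, hence also of $\msZ_{\mathfrak p}$-modules. By Lemma~\ref{height one splitting}, $\ttf_{\mathfrak p}$ is a split monomorphism of $\msH_{\mathfrak p}$-modules, so $\msT_{\mathfrak p}^{\oplus N}\cong \msH_{\mathfrak p}\oplus\msC_{\mathfrak p}$ as $\msH_{\mathfrak p}$-modules. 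Since the action of $\msZ_{\mathfrak p}$ factors through the central subalgebra $\msZ_{\mathfrak p}\subseteq\msH_{\mathfrak p}$, the same decomposition holds as $\msZ_{\mathfrak p}$-modules. Thus $\msC_{\mathfrak p}$ is a $\msZ_{\mathfrak p}$-direct summand of $\msT_{\mathfrak p}^{\oplus N}$.

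Next, by Lemma~\ref{T finite free over Z}, $\msT$ is a free $\msZ$-module of finite rank. Hence $\msT_{\mathfrak p}^{\oplus N}\cong \msT^{\oplus N}\otimes_{\msZ}\msZ_{\mathfrak p}$ is a free $\msZ_{\mathfrak p}$-module of finite rank. A direct summand of a free module is projective, so $\msC_{\mathfrak p}$ is a finitely generated projective $\msZ_{\mathfrak p}$-module. In fact, since $\msZ_{\mathfrak p}$ is local, $\msC_{\mathfrak p}$ is free.

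Finally, $\msZ$ is a domain (a localization of a polynomial ring over the field $\mpk$), so $\msZ_{\mathfrak p}$ is a domain. A projective module over a domain embeds in a free module, and free modules over a domain are torsion-free. Hence $\msC_{\mathfrak p}$ is torsion-free over $\msZ_{\mathfrak p}$.

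The only point requiring care is that the $\msH_{\mathfrak p}$-splitting gives a $\msZ_{\mathfrak p}$-splitting. This holds because $\msZ$ is the center of $\msH$ and the $\msZ$-module structures on $\msH$, $\msT$ and $\msC$ are the restrictions of the right $\msH$-actions. This compatibility of the central action with the module structures is the step I expect to need the most checking.
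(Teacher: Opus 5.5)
Your proposal is correct and follows the paper's proof essentially verbatim: you localize \eqref{main ses}, use the splitting from Lemma~\ref{height one splitting} to get $\msT_{\mathfrak p}^{\oplus N}\cong\msH_{\mathfrak p}\oplus\msC_{\mathfrak p}$, and use the freeness of $\msT$ over $\msZ$ (Lemma~\ref{T finite free over Z}) to conclude that $\msC_{\mathfrak p}$ is projective, hence torsion-free over the domain $\msZ_{\mathfrak p}$. Your two extra observations are correct refinements: that the $\msH_{\mathfrak p}$-splitting is also a $\msZ_{\mathfrak p}$-splitting because $\msZ$ is central, and that $\msC_{\mathfrak p}$ is in fact free over the local ring $\msZ_{\mathfrak p}$. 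The paper leaves the first implicit and does not need the second.
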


\begin{proof}
By \eqref{main ses} we have the following
exact sequence
\[
0\longrightarrow\msH_{\mathfrak p}
\xrightarrow{\,\ttf_{\mathfrak p}\,}
\msT_{\mathfrak p}^{\oplus N}
\xrightarrow{\,\pi_{\mathfrak p}\,}\msC_{\mathfrak p}
\longrightarrow0.
\]
Hence by Lemma~\ref{height one splitting},  we have
\begin{equation}\label{split localized}
\msT_{\mathfrak p}^{\oplus N}
\cong
\msH_{\mathfrak p}\oplus\msC_{\mathfrak p}.
\end{equation}
By Lemma \ref{T finite free over Z},  there exist
an integer $m\geq1$ such that
$
\msT
\cong
\msZ^{\oplus m}.$
It follows that
$
\msT_{\mathfrak p}
\cong
\msZ_{\mathfrak p}^{\oplus m}.
$
Thus $\msT_{\mathfrak p}$  is a projective
$\msZ_{\mathfrak p}$-module. Consequently,
by \eqref{split localized},
$\msC_{\mathfrak p}$ is a projective $\msZ_{\mathfrak p}$-module.
Note that a projective module over
a domain is torsion-free.
Since $\msZ_{\mathfrak p}$ is a domain,
we conclude that
$\msC_{\mathfrak p}$
 is a torsion free $\msZ_{\mathfrak p}$-module.
\end{proof}

We now use the preceding height-one property to prove that
$\msC$ itself is torsion-free. To this end, we need the following elementary commutative algebra
observation, which allows us to choose a torsion element with a prime
annihilator.

\begin{lemma}\label{torsion prime ann}
Let $R$ be a Noetherian domain and let $M$ be an $R$-module.
Suppose that $M$ has nonzero torsion. Then there exists a nonzero
torsion element $m\in M$ such that
$
\mathfrak p:=\operatorname{Ann}_R(m)$
is a nonzero prime ideal of $R$.
\end{lemma}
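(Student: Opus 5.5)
We will take a maximal element of the set of annihilators of nonzero torsion elements and show that it is prime. The ascending chain condition on ideals of the Noetherian domain $R$ guarantees that such a maximal element exists.

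First, consider the set
\[
\Sigma=\{\operatorname{Ann}_R(x)\mid 0\neq x\in M,\ x \text{ torsion}\}.
\]
Since $M$ has a nonzero torsion element, $\Sigma$ is nonempty. Every member of $\Sigma$ is a proper ideal, because $x\neq0$ gives $1\notin\operatorname{Ann}_R(x)$. Every member is also nonzero, because $x$ torsion means some nonzero $r\in R$ kills $x$. As $R$ is Noetherian, $\Sigma$ has a maximal element $\mathfrak p=\operatorname{Ann}_R(m)$ for some nonzero torsion element $m$.

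Second, we check that $\mathfrak p$ is prime. Suppose $a,b\in R$ with $ab\in\mathfrak p$ and $b\notin\mathfrak p$. Then $bm\neq0$. Moreover $bm$ is again torsion: if $0\neq r\in\mathfrak p$, then $r(bm)=b(rm)=0$. Hence $\operatorname{Ann}_R(bm)\in\Sigma$, and clearly $\operatorname{Ann}_R(bm)\supseteq\operatorname{Ann}_R(m)=\mathfrak p$. By maximality, $\operatorname{Ann}_R(bm)=\mathfrak p$. Since $a(bm)=(ab)m=0$, we get $a\in\mathfrak p$. Therefore $\mathfrak p$ is a proper prime ideal, and it is nonzero as noted above.

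This is the standard associated-prime argument, so I expect no real obstacle. The only point needing care is that $bm$ remains a \emph{nonzero torsion} element, so that its annihilator actually lies in $\Sigma$. This uses that $\mathfrak p\neq0$, and that $\mathfrak p$ consists of nonzero-divisors because $R$ is a domain. Note that $M$ need not be finitely generated: the maximality argument takes place in the ideal lattice of $R$, not in $M$.
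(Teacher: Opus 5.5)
Your proposal is correct and follows the paper's proof: you choose a maximal annihilator among nonzero torsion elements, which exists because $R$ is Noetherian, and then show it is prime using the maximality of $\operatorname{Ann}_R(bm)\supseteq\mathfrak p$. You also explicitly check that $\mathfrak p$ is proper and nonzero, a detail the paper leaves implicit.
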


\begin{proof}
Let
$
T
=
\{x\in M\mid rx=0
\text{ for some }0\neq r\in R\}$
be the torsion submodule of $M$. By assumption, $T\neq0$.
Consider
$
\mathcal A
=
\{\operatorname{Ann}_R(x)\mid 0\neq x\in T\}.$
Since $R$ is Noetherian,  $\mathcal A$ has a
maximal element with respect to inclusion. Choose
$
0\neq m\in T$
such that
$
\mathfrak p:=\operatorname{Ann}_R(m)
$
is maximal in $\mathcal A$.
Since $m$ is a torsion element,
$\mathfrak p\neq(0).$

It remains to prove that $\mathfrak p$ is prime. Suppose that
$
ab\in\mathfrak p$ and $a\notin\mathfrak p.$
Then
$
am\neq0.$
Since $m\in T$, we have
$ am\in T.$
Moreover,
$\mathfrak p
\subseteq
\operatorname{Ann}_R(am).$
By the maximality of $\mathfrak p$ in $\mathcal A$, it follows that
$
\operatorname{Ann}_R(am)=\mathfrak p.
$
Since
$
ab\in\mathfrak p=\operatorname{Ann}_R(m),$
we have
$
b\in\operatorname{Ann}_R(am)=\mathfrak p.$
Hence $\mathfrak p$ is prime.
\end{proof}

The following proposition will be used to pass from torsion-freeness at height-one localizations to global torsion-freeness.

\begin{proposition}\label{height one torsion free}
Let $R$ be a Noetherian unique factorization domain. Assume that
there is a short exact sequence of right $R$-modules
\[
0\longrightarrow F
\xrightarrow{\,f\,}
E
\xrightarrow{\,\pi\,}
M
\longrightarrow0,
\]
where $F$ is a projective $R$-module and $E$ is a torsion-free $R$-module.
Suppose further that $M_{\mathfrak p}$ is a torsion-free
$R_{\mathfrak p}$-module for every height one prime ideal
$\mathfrak p$ of $R$.
Then $M$ is a torsion-free $R$-module.
\end{proposition}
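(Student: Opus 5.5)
The plan is to argue by contradiction, combining Lemma~\ref{torsion prime ann} with a case split on the height of the prime annihilator. Suppose $M$ has nonzero torsion. By Lemma~\ref{torsion prime ann} there is a nonzero $m\in M$ whose annihilator $\mathfrak p=\operatorname{Ann}_R(m)$ is a nonzero prime ideal. The argument then splits according to whether $\operatorname{ht}\mathfrak p=1$ or $\operatorname{ht}\mathfrak p\geq2$.

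\emph{Height one.} If $\operatorname{ht}\mathfrak p=1$, consider $m/1\in M_{\mathfrak p}$.
\begin{itemize}
\item It is nonzero: otherwise $sm=0$ for some $s\notin\mathfrak p$, which contradicts $\operatorname{Ann}_R(m)=\mathfrak p$.
\item It is torsion: it is killed by any nonzero element of $\mathfrak p R_{\mathfrak p}$.
\end{itemize}
This contradicts the hypothesis that $M_{\mathfrak p}$ is torsion-free.

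\emph{Height at least two.} This is the main step, and the plan is to use the UFD property to find two coprime elements of $\mathfrak p$.

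First, pick $0\neq a\in\mathfrak p$ and factor it as $a=u\pi_1^{e_1}\cdots\pi_k^{e_k}$ with $u$ a unit and $\pi_i$ primes. Each $(\pi_i)$ is a height-one prime. We cannot have $\mathfrak p\subseteq(\pi_i)$, since that would force $\operatorname{ht}\mathfrak p\le1$. By prime avoidance there is therefore $b\in\mathfrak p\setminus\bigcup_i(\pi_i)$. Then $a,b\in\mathfrak p$ have no common prime factor, so whenever $a\mid bc$ in $R$ we get $a\mid c$.

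Next, lift $m$ to $e\in E$. Since $am=bm=0$, we can write $ae=f(x)$ and $be=f(y)$ with $x,y\in F$. Then $f(bx-ay)=abe-bae=0$, and injectivity of $f$ gives $bx=ay$ in $F$.

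Since $F$ is projective, write $G=F\oplus F'$ with $G$ free, and compare coordinates in a basis of $G$. Coprimality gives $a\mid x_i$ for every coordinate $x_i$ of $x$, so $x=az$ for some $z\in G$. Applying the projection $p\colon G\to F$ gives $x=a\,p(z)$ with $p(z)\in F$.

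Hence $a\bigl(e-f(p(z))\bigr)=0$ in $E$. Because $E$ is torsion-free and $a\neq0$, we get $e=f(p(z))$, and so $m=\pi(e)=0$. This is a contradiction.

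The step I expect to need the most care is this height $\geq2$ case. One has to justify producing coprime $a,b\in\mathfrak p$ via prime avoidance and the height bound, and then transport divisibility from a free module to the projective summand $F$. Both cases are contradictory, so $M$ is torsion-free.
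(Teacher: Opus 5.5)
Your proof is correct and follows essentially the same route as the paper. Both arguments obtain a prime annihilator from Lemma~\ref{torsion prime ann}, settle the height-one case by localizing, and in the height $\geq 2$ case take two suitably coprime elements of the annihilator, lift to $E$, derive $bx=ay$ in $F$, and conclude using torsion-freeness of $E$. The only difference is cosmetic: the paper takes a prime element $x$ of an intermediate prime $(0)\subsetneq\mathfrak p_1\subsetneq\mathfrak r$ and $y\in\mathfrak r\setminus\mathfrak p_1$, then uses that $F/Fx$ is torsion-free over the domain $R/(x)$, whereas you get a coprime pair via prime avoidance and check divisibility coordinatewise in a free module containing $F$ as a summand (you could drop prime avoidance by taking $a$ to be a prime element of $\mathfrak p$ and $b\in\mathfrak p\setminus(a)$).
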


\begin{proof}
Suppose, to the contrary, that $M$ has nonzero torsion. Then by Lemma \ref{torsion prime ann} there exists a nonzero torsion element
$c\in M$ such that
$\mathfrak r:=\operatorname{Ann}_R(c)$
is a nonzero prime ideal of $R$.

Suppose first that $\operatorname{ht}\mathfrak r=1.$
Then $c/1\neq0$ in $M_{\mathfrak r}.$
Since $\mathfrak r\neq(0)$, there exist a non-zero element $z$ in $\mathfrak r=\operatorname{Ann}_{R}(c).$
As $R$ is a domain and $z\in\operatorname{Ann}_{R}(c)$, we have $z/1\neq0$ in $R_{\mathfrak r}$ and $(c/1)(z/1)=0.$
Thus $M_{\mathfrak r}$ contains a nonzero torsion element,
contrary to the hypothesis. Therefore
$\operatorname{ht}\mathfrak r\neq1.$
Since $R$ is a domain and
$(0)\subsetneq\mathfrak r,$
it follows that
$\operatorname{ht}\mathfrak r\geq2.$
Hence there exists a prime ideal $\mathfrak p_1$ such that
\[
(0)\subsetneq\mathfrak p_1\subsetneq\mathfrak r.
\]
Since $R$ is a unique factorization domain and $\mathfrak p_1\not=0$,
$\mathfrak p_1$ contains an irreducible element $x$.
Since every irreducible element in a unique factorization domain
is prime, $(x)$ is a prime ideal.
Hence $R/(x)$ is a domain.
Choose
$y\in\mathfrak r\setminus\mathfrak p_1.$
Since $(x)\subseteq\mathfrak p_1,$
we have $y\notin(x)$. Therefore
$\overline y:=y+(x)\neq0$ in $R/(x).$

Since $F$ is a projective $R$-module, it is a direct summand of a free $R$-module $L$. Hence $F/Fx$ is a direct summand of the free $R/(x)$-module $L/Lx$. Since $R/(x)$ is a domain, $L/Lx$ is a torsion-free $R/(x)$-module. Therefore the direct summand $F/Fx$ of  $L/Lx$ is also a torsion-free $R/(x)$-module.

Choose $u\in E$ such that
$\pi(u)=c.$
Since
$x,y\in\mathfrak r=\operatorname{Ann}_R(c),$
we have
\[
\pi(ux)=cx=0,
\qquad
\pi(uy)=cy=0.
\]
As
$\ker(\pi)=\operatorname{Im}(f),$
there exist $a,b\in F$ such that
$
f(a)=ux,$
$f(b)=uy.$
Since $f$ is $R$-linear,
\[
f(ay)=f(a)y=uxy=uyx=f(b)x=f(bx).
\]
As $f$ is injective,
$ay=bx.$ It follows that
$\overline a\,\overline y=\overline b\,\overline x=0$
in $F/Fx$.
Since
$\overline y\neq0$
in the domain $R/(x)$, and $F/Fx$ is torsion-free
$R/(x)$-module, we have
$\overline a=0.$
Hence $
a\in Fx,$
so there exists $d\in F$ such that
$a=dx.$
Consequently,
$
ux
=
f(a)
=
f(dx)
=
f(d)x,
$
and therefore
$\bigl(u-f(d)\bigr)x=0$ in $E$.
Since $E$ is torsion-free $R$-module and $x\neq0$,
we conclude that $u=f(d).$
It follows that
$c=\pi(u)=\pi(f(d))=0,$
contradicting the choice of $c$.
Therefore $M$ is torsion-free $R$-module.
\end{proof}

We now apply Proposition~\ref{height one torsion free} to the short exact sequence
\eqref{main ses}.

\begin{proposition}\label{C torsionfree}
The $\msZ$-module $\msC$ is torsion-free.
\end{proposition}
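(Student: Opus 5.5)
The plan is to apply Proposition~\ref{height one torsion free} with $R=\msZ$ to the short exact sequence \eqref{main ses}, regarded as a sequence of $\msZ$-modules via the (central) right action of $\msZ$. This means checking the three hypotheses in turn: $\msZ$ is a Noetherian unique factorization domain, $\msH$ is projective and $\msT^{\oplus N}$ is torsion-free over $\msZ$, and $\msC_{\mathfrak p}$ is torsion-free for every height-one prime $\mathfrak p$.

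For the ring, $\mpk$ is now a field, hence a Noetherian unique factorization domain. Proposition~\ref{finite generated} then gives that $\msZ$ is Noetherian, and Lemma~\ref{Z is UFD} gives that $\msZ$ is a unique factorization domain. One should also note that $\msZ=\mpk[e_1,\ldots,e_{r-1},e_r^{\pm1}]$ is a domain, being a subring of the domain $\msP$.

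For the modules, Lemma~\ref{H is finite rank} shows that $\msH$ is a free, hence projective, $\msZ$-module. Lemma~\ref{T finite free over Z} shows that $\msT$ is free of finite rank over $\msZ$. So $\msT^{\oplus N}$ is free over the domain $\msZ$ and therefore torsion-free. The maps $\ttf$ and $\pi$ are $\msH$-linear, so they are in particular $\msZ$-linear. Since $\msZ$ is commutative, the right-module convention in Proposition~\ref{height one torsion free} causes no trouble.

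The local hypothesis is exactly Lemma~\ref{Cp projective}: for every height-one prime $\mathfrak p$ of $\msZ$, the module $\msC_{\mathfrak p}$ is a projective, hence torsion-free, $\msZ_{\mathfrak p}$-module. With all hypotheses in place, Proposition~\ref{height one torsion free} yields that $\msC$ is a torsion-free $\msZ$-module.

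The only step needing care is the compatibility of the localization in Lemma~\ref{Cp projective} with the hypothesis of Proposition~\ref{height one torsion free}. Localization at $\mathfrak p$ is exact, so $\msC_{\mathfrak p}$ is the cokernel of $\ttf_{\mathfrak p}$, and the lemma applies directly. The real input, namely the splitting at height-one primes via Proposition~\ref{prop:height-one-fullness}, has already been done.
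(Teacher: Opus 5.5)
Your proposal is correct and is essentially the paper's proof. You apply Proposition~\ref{height one torsion free} with $R=\msZ$ to \eqref{main ses}, verifying its hypotheses via Lemma~\ref{H is finite rank} ($\msH$ projective), Lemma~\ref{T finite free over Z} ($\msT^{\oplus N}$ free, hence torsion-free), Lemma~\ref{Cp projective} (the height-one local condition), and Proposition~\ref{finite generated} with Lemma~\ref{Z is UFD} ($\msZ$ a Noetherian UFD); your extra remarks (that $\mpk$ is a field, so a Noetherian UFD, and that the maps are $\msZ$-linear) only make explicit what the paper leaves implicit.
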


\begin{proof}
By Lemma~\ref{H is finite rank}, $\msH$ is a
projective $\msZ$-module. By Lemma~\ref{T finite free over Z},
$\msT^{\oplus N}$ is a free $\msZ$-module, and hence is
torsion-free since $\msZ$ is a domain. Moreover, by
Lemma~\ref{Cp projective}, $\msC_{\mathfrak p}$ is a torsion-free $\msZ_{\mathfrak p}$-module for every height-one
prime ideal $\mathfrak p$ of $\msZ$. Finally, by
Proposition~\ref{finite generated} and Lemma~\ref{Z is UFD}, $\msZ$ is a Noetherian unique factorization
domain. The assertion now follows from
Proposition~\ref{height one torsion free} applied to \eqref{main ses}.
\end{proof}

\subsection{Embedding of the cokernel}
We now use Proposition \ref{C torsionfree} to construct the
embedding required in the double-centralizer criterion.
\begin{lemma}\label{C embed H}
There exists an integer $d\geq0$ and an injective homomorphism of
right $\msH$-modules
$\beta:\msC\rightarrow \msH^{\oplus d}.$
Consequently, there is an injective homomorphism of right
$\msH$-modules
$
\theta:\msC\rightarrow \msT^{\oplus Nd}.$
\end{lemma}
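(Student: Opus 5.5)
The plan is to use the Frobenius structure of $\msH$ over its center, as announced in the introduction via Brown--Gordon--Stroppel \cite{BGS}. We use that $\msH$ is a free Frobenius extension of $\msZ$. Concretely, there is a $\msZ$-linear form $\tau:\msH\to\msZ$ whose associated map
\[
\msH\longrightarrow \Hom_{\msZ}(\msH,\msZ),\qquad h\longmapsto \tau(h\,\cdot\,),
\]
is an isomorphism of right $\msH$-modules (possibly after twisting by a Nakayama automorphism, which is harmless for embedding purposes). Consequently, for any right $\msH$-module $M$ we get a natural isomorphism
\[
\Hom_{\msZ}(M,\msZ)\cong \Hom_{\msH}(M,\msH),\qquad \phi\longmapsto\bigl(m\mapsto \textstyle\sum_i h_i^\vee\,\phi(m h_i)\bigr),
\]
where $\{h_i\},\{h_i^\vee\}$ are dual $\msZ$-bases of $\msH$ with respect to $\tau$. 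Both sides are finitely generated, since $\msH$ is finite free over $\msZ$ by Lemma~\ref{H is finite rank}.

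First, $\msC$ is a finitely generated $\msZ$-module, because it is a quotient of $\msT^{\oplus N}$, which is finite free by Lemma~\ref{T finite free over Z}. It is also torsion-free by Proposition~\ref{C torsionfree}. A finitely generated torsion-free module over the Noetherian domain $\msZ$ embeds into a finite free module. To see this, note that $\msC\to \msC\otimes_{\msZ}\Frac(\msZ)\cong \Frac(\msZ)^{\oplus d'}$ is injective, and then clear denominators of the finitely many generators. Equivalently, the $\msZ$-linear functionals $\phi_1,\dots,\phi_d\in\Hom_{\msZ}(\msC,\msZ)$ obtained this way jointly separate the points of $\msC$.

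Next we transport these functionals to $\msH$-linear maps. Let $\beta_j:\msC\to\msH$ correspond to $\phi_j$ under the isomorphism above, and put $\beta=(\beta_1,\dots,\beta_d)$. For injectivity, suppose $\beta(c)=0$. Then $\tau(\beta_j(c))=\phi_j(c)=0$ for all $j$, with the normalization chosen so that $\tau\circ\beta_j=\phi_j$; this is exactly the adjunction identity for Frobenius extensions. Hence $c=0$, so $\beta$ is an injective homomorphism of right $\msH$-modules $\msC\hookrightarrow\msH^{\oplus d}$.

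Finally, $\theta=\ttf^{\oplus d}\circ\beta:\msC\to(\msT^{\oplus N})^{\oplus d}=\msT^{\oplus Nd}$ is injective, because $\ttf$ is injective by Lemma~\ref{first left approx}.

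The main obstacle is pinning down the precise form of the Frobenius isomorphism $\Hom_{\msZ}(\msH,\msZ)\cong\msH$ from \cite{BGS}: we need it to be an isomorphism of right (not merely left, or twisted) $\msH$-modules, or to handle the Nakayama twist. If a twist by an automorphism $\nu$ appears, we would note that $\msH^\nu\cong\msH$ as right modules via $h\mapsto\nu(h)$, provided $\nu$ is inner or fixes $\msZ$, so the embedding still lands in $\msH^{\oplus d}$. As a fallback, one can avoid \cite{BGS} entirely by using that $\msH\cong\End_{\msZ}$-type structure is not needed. Instead, embed $\msC\hookrightarrow\msC\otimes_{\msZ}\Frac(\msZ)$, a finite module over the semisimple algebra $\msH\otimes_{\msZ}\Frac(\msZ)$ (semisimple by generic semisimplicity), hence a summand of a free module $(\msH\otimes\Frac(\msZ))^{\oplus d}$. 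Then clear denominators using finite generation of $\msC$.
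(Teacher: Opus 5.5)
Your proof is correct and is essentially the paper's: you embed the finitely generated torsion-free $\msZ$-module $\msC$ into $\msZ^{\oplus d}$, turn each coordinate functional $\phi_j$ into the right $\msH$-linear map $c\mapsto\bigl(h\mapsto\phi_j(ch)\bigr)$ into $\Hom_{\msZ}(\msH,\msZ)$, identify the latter with $\msH$ via Brown--Gordon--Stroppel, get injectivity by evaluating at $1$ (your identity $\tau\circ\beta_j=\phi_j$), and then compose with $\ttf^{\oplus d}$. Two small remarks: the Nakayama-twist worry needs no condition on $\nu$, since the twisted right regular module with action $h*a=h\nu(a)$ is always isomorphic to $\msH$ via $h\mapsto\nu^{-1}(h)$; and your fallback is also valid, because $\msH\otimes_{\msZ}\Frac(\msZ)$ is a full matrix algebra over $\Frac(\msZ)$ (faithful action on the rank-$r!$ module $x_{(r,0,\ldots,0)}\msH$ plus a rank count), which gives a route that avoids Brown--Gordon--Stroppel.
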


\begin{proof}
By Lemma \ref{T finite free over Z}
the module $\msT^{\oplus N}$ is a finitely generated $\msZ$-module. It follows that  $\msC$ is a finitely generated $\msZ$-module, since
it is a quotient of $\msT^{\oplus N}$. Hence, by Proposition~\ref{C torsionfree} and
\cite[Lem.~4.33(ii)]{Rotman}, there exist an integer $d\geq0$ and
an injective $\msZ$-module homomorphism
\[
\ell:\msC\longrightarrow\msZ^{\oplus d}.
\]
For each $1\leq i\leq d$, let
$
\operatorname{pr}_i:\msZ^{\oplus d}\longrightarrow \msZ
$
be the $i$th coordinate projection, and define
\[
\ell_i:=\operatorname{pr}_i\circ\ell:
\msC\longrightarrow \msZ.
\]
For each $1\leq i\leq d$, define
\[
\widetilde\ell_i:
\msC\longrightarrow\Hom_{\msZ}(\msH,\msZ)
\]
by
$
\widetilde\ell_i(c)(h)=\ell_i(ch)
\
(c\in\msC,\ h\in\msH).
$
Then each $\widetilde\ell_i$ is a homomorphism of right
$\msH$-modules, where the right $\msH$-module structure on
$\Hom_{\msZ}(\msH,\msZ)$ is given by
\begin{equation*}
(F\cdot a)(h)=F(ah)
\qquad
(F\in\Hom_{\msZ}(\msH,\msZ),\ a,h\in\msH).
\end{equation*}
Define
\[
\widetilde\ell:
\msC\longrightarrow
\Hom_{\msZ}(\msH,\msZ)^{\oplus d},
\qquad
c\longmapsto
\bigl(
\widetilde\ell_1(c),\ldots,\widetilde\ell_d(c)
\bigr).
\]
If $\widetilde\ell(c)=0$, then
$\ell_i(c)=\widetilde\ell_i(c)(1)=0$
for $1\leq i\leq d$.
Hence $\ell(c)=0$. Since $\ell$ is injective, $c=0$. Therefore
$\widetilde\ell$ is injective.
By Brown--Gordon--Stroppel \cite[Th.~5.2 and (2.1)]{BGS},
$\Hom_{\msZ}(\msH,\msZ)$ is isomorphic to $\msH$
as a right $\msH$-modules. Therefore there is an injective homomorphism of right $\msH$-modules
\[
\beta:\msC\longrightarrow\msH^{\oplus d}.
\]
Finally,  the injective homomorphism
$
\ttf:\msH\rightarrow\msT^{\oplus N}
$
induces an injective homomorphism
\[
\ttf^{\oplus d}:
\msH^{\oplus d}\longrightarrow\msT^{\oplus Nd}.
\]
Therefore
\[
\theta:=\ttf^{\oplus d}\circ\beta:
\msC\longrightarrow\msT^{\oplus Nd}
\]
is an injective homomorphism of right $\msH$-modules.
\end{proof}

\subsection{Proof of the main theorem}
By Lemma~\ref{first left approx} and Lemma~\ref{C embed H}, the hypotheses of the abstract double-centralizer criterion are satisfied. The following theorem, which confirms Conjecture 3.8.8 of \cite{DDF}, now follows from Proposition~\ref{embedding criterion}.

\begin{theorem}\label{main}
Let $\mpk$ be a field of characteristic $0$, let
$\vep\in\mpk^\times$ be not a root of unity, and let
$n\geq2$.
The algebra homomorphism $$\xr : \ \afHrk\lra\End_{\afSrk} \Bigl( \bigoplus_{\lambda \in \afLanr} x_\lambda \afHrk \Bigr)^\oop$$
defined in \eqref{xir} is an algebra isomorphism.
\end{theorem}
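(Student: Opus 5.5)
Since injectivity of $\xr$ is already Theorem~\ref{injection}, the plan is to prove surjectivity by reducing it to the double-centralizer property of $\msH$ relative to $\msT$, and then to verify the hypotheses of Proposition~\ref{embedding criterion}. First we identify $\xr$ with the evaluation map. Via $\Hom_{\msH}(\msH,\msT)\cong\msT$ (sending $\varphi$ to $\varphi(1)$), we have $\msH^*\cong\msT$ as left $\msS$-modules. Hence $\msH^{**}=\Hom_{\msS}(\msT,\msT)=\End_{\afSrk}(\Ogk^{\otimes r})$, and under this identification $\alpha_{\msH}(h)$ is right multiplication by $h$ on $\msT$. Thus $\alpha_{\msH}$ coincides with $\xr$, up to the $\oop$ bookkeeping and the identification \eqref{T decom}, and it suffices to show that $\alpha_{\msH}$ is bijective.

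Next we check the hypotheses of Proposition~\ref{embedding criterion} with $H=\msH$, $T=\msT$ and $S=\msS$. We take $R=\msZ$, which is a Noetherian domain, and $\msH$ is a $\msZ$-algebra since $\msZ$ is central. The algebra $\msS=\afSrk$ is left Noetherian by Proposition~\ref{finite generated}, because a field $\mpk$ is Noetherian. Lemma~\ref{first left approx} provides an injective left $\add(\msT)$-approximation $\ttf:\msH\to\msT^{\oplus N}$. Let $\msC=\Coker\ttf$ with projection $\pi$, as in \eqref{main ses}. Lemma~\ref{C embed H} gives an injective homomorphism $\theta:\msC\to\msT^{\oplus Nd}$. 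Setting $g=\theta\circ\pi:\msT^{\oplus N}\to\msT^{\oplus Nd}$, we get $\ker g=\ker\pi=\ttf(\msH)$, so the sequence
\[
0\longrightarrow\msH\xrightarrow{\,\ttf\,}\msT^{\oplus N}\xrightarrow{\,g\,}\msT^{\oplus Nd}
\]
is exact, which is exactly \eqref{criterion exact}. Proposition~\ref{embedding criterion} then gives $\alpha_{\msH}:\msH\xrightarrow{\sim}\msH^{**}$, and by the identification above $\xr$ is an isomorphism.

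We expect the main obstacle to be the identification of $\alpha_{\msH}$ with $\xr$. Concretely, one must check two things. First, the left $\msS$-module structure on $\msH^*$ transported to $\msT$ is the given action of $\afSrk$, where $\msS$ acts on $\msH^*=\Hom_{\msH}(\msH,\msT)$ by post-composition. Second, composing with the isomorphism $\msH^*\cong\msT$ turns $\Hom_{\msS}(\msH^*,\msT)$ into $\End_{\msS}(\msT)$ in such a way that the opposite-algebra convention matches the definition \eqref{xir}. This is routine but must be done carefully. The substantive input, the torsion-freeness of $\msC$ used in Lemma~\ref{C embed H}, has already been established, and it relies on $n\geq2$ (through $e=e_\delta$), on characteristic $0$, and on $\vep$ not being a root of unity.
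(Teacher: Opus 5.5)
Your proposal is correct and matches the paper's proof. You use injectivity from Theorem~\ref{injection}, the approximation $\ttf$ from Lemma~\ref{first left approx}, the embedding $\theta$ of $\msC$ from Lemma~\ref{C embed H} to get the exact sequence with $\theta\circ\pi$, and Proposition~\ref{embedding criterion} with $\msS$ left Noetherian by Proposition~\ref{finite generated}. Your identification of $\alpha_{\msH}$ with $\xr$ via $\Hom_{\msH}(\msH,\msT)\cong\msT$, which the paper leaves implicit, is also correct.
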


\begin{proof}
Recall the short exact sequence
$
0\rightarrow\msH
\xrightarrow{\,\ttf\,}
\msT^{\oplus N}
\xrightarrow{\,\pi\,}
\msC
\rightarrow0,
$
given in \eqref{main ses},
where $\ttf$ is a left $\add(\msT)$-approximation.
By Lemma~\ref{C embed H}, there exist an integer $d\geq0$ and
an injective homomorphism of right $\msH$-modules
\[
\theta:\msC\longrightarrow\msT^{\oplus Nd}.
\]
Define
\[
\vartheta=\theta\circ\pi:
\msT^{\oplus N}\longrightarrow\msT^{\oplus Nd}.
\]
Since $\theta$ is injective,
$
\ker \vartheta
=
\ker\pi
=
\ttf(\msH).$
Hence
\[
0\longrightarrow\msH
\xrightarrow{\,\ttf\,}
\msT^{\oplus N}
\xrightarrow{\,\vartheta\,}
\msT^{\oplus Nd}
\]
is exact.
By Proposition~\ref{finite generated}, $\msS$ is left Noetherian.  Now the assertion follows from
Proposition~\ref{embedding criterion}.
\end{proof}

\begin{remark}
There is  a degenerate analogue of Theorem~\ref{main}.
For $n\geq r$,  this analogue is straightforward to establish (see \cite[Th.~3.8]{BrundanIvanov}).
For $2\leq n<r$, injectivity of the natural algebra homomorphism follows
from the faithfulness of the right polynomial representation of the
degenerate affine Hecke algebra $\mathsf{AH}_r$. More precisely, the direct summand
corresponding to $(r,0,\ldots,0)$ is naturally isomorphic,
as a right $\mathsf{AH}_r$-module, to
$
\mpk[x_1,\ldots,x_r],$
where
\[
f\cdot x_j=fx_j,\qquad
f\cdot s_i=s_i(f)+\partial_i(f),
\qquad
\partial_i(f)=\frac{f-s_i(f)}{x_i-x_{i+1}}
\]
for $f\in \mpk[x_1,\ldots,x_r]$.
This right polynomial representation is faithful. The arguments developed in \S4--\S6 can then be applied to the degenerate setting to establish a degenerate analogue of Theorem~\ref{main}. We omit the proof here.
\end{remark}

Let $\mpk$ be a field of characteristic $0$, and let
$\vep\in\mpk^\times$ be not a root of unity.  Assume $n\geq2$.
Let $U(\widehat{\mathfrak{gl}}_n)_\mpk$ be the quantum affine $\frak{gl}_n$ over $\mpk$. By \cite[Th. 3.8.1]{DDF}, the natural algebra homomorphism
\begin{equation}\label{zr}
\zeta_r:U(\widehat{\mathfrak{gl}}_n)_\mpk\ra\afSrk
\end{equation}
is surjective.
Applying Theorem~\ref{main}, we obtain the following result, which describes the extended affine Hecke algebra as the full centralizer of the quantum affine
$\frak{gl}_n$-action on affine tensor space.
\begin{theorem} \label{main2}
The natural commuting actions of
$U(\widehat{\mathfrak{gl}}_n)_\mpk$ and $\afHrk$ on
$\Omega_{\mpk}^{\otimes r}$ satisfy
$$ \afHrk \cong \operatorname{End}_{U(\widehat{\mathfrak{gl}}_n)_\mpk} (\Omega_{\mpk}^{\otimes r})^{\operatorname{op}} . $$
In other words, the extended affine Hecke algebra is the full
centralizer of the quantum affine $\mathfrak{gl}_n$-action.
\end{theorem}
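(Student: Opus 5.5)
The plan is to combine Theorem~\ref{main} with the surjectivity of $\zr$ in \eqref{zr}. The idea is to show that the $U(\widehat{\mathfrak{gl}}_n)_\mpk$-action on $\Ogk^{\otimes r}$ and the $\afSrk$-action have the same centralizer. Then $\End_{U(\widehat{\mathfrak{gl}}_n)_\mpk}(\Ogk^{\otimes r})=\End_{\afSrk}(\Ogk^{\otimes r})$, and Theorem~\ref{main} identifies the right-hand side with $\afHrk^{\oop}$ via $\xr$.

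First, I make the module structure precise. The action of $U(\widehat{\mathfrak{gl}}_n)_\mpk$ on $\Ogk^{\otimes r}$ is by definition the pullback along $\zr$ of the natural left action of $\afSrk=\End_{\afHrk}(\Ogk^{\otimes r})$. This is the content of \cite[Th.~3.8.1]{DDF}, where $\zr$ is constructed compatibly with the tensor-space action. Hence, for $u\in U(\widehat{\mathfrak{gl}}_n)_\mpk$ and $x\in\Ogk^{\otimes r}$, we have $u\cdot x=\zr(u)(x)$. Since $\zr(u)\in\afSrk$ commutes with the right $\afHrk$-action, the two actions commute. Therefore every element of $\afHrk$ acts by a $U(\widehat{\mathfrak{gl}}_n)_\mpk$-linear endomorphism, which gives an algebra homomorphism $\afHrk\to\End_{U(\widehat{\mathfrak{gl}}_n)_\mpk}(\Ogk^{\otimes r})^{\oop}$.

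Second, I compare the two endomorphism algebras. If $\phi$ commutes with the $\afSrk$-action, then it commutes with every $\zr(u)$, so
\[
\End_{\afSrk}(\Ogk^{\otimes r})\subseteq \End_{U(\widehat{\mathfrak{gl}}_n)_\mpk}(\Ogk^{\otimes r}).
\]
Conversely, suppose $\phi$ commutes with every $\zr(u)$. Since $\zr$ is surjective, every $s\in\afSrk$ has the form $\zr(u)$ for some $u$, so $\phi$ commutes with all of $\afSrk$. Thus the two endomorphism algebras coincide as subalgebras of $\End_\mpk(\Ogk^{\otimes r})$. The natural map from $\afHrk$ into either algebra is given by the same right action, so it is exactly $\xr$. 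Theorem~\ref{main} (valid since $\mpk$ has characteristic $0$, $\vep$ is not a root of unity and $n\ge2$) then yields the isomorphism $\afHrk\cong\End_{U(\widehat{\mathfrak{gl}}_n)_\mpk}(\Ogk^{\otimes r})^{\oop}$.

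The only delicate point is the first step. One must check that the $U(\widehat{\mathfrak{gl}}_n)_\mpk$-module structure on $\Ogk^{\otimes r}$ meant in the statement really factors through $\zr$, rather than being some independently defined action. One must also check that \cite[Th.~3.8.1]{DDF}, stated over $\mathbb{C}$ or $\mbq(v)$, applies over an arbitrary characteristic-$0$ field $\mpk$ with non-root-of-unity $\vep$. I would handle the latter by noting that the generators' images in $\afSrk$ are given by integral formulas. Surjectivity then transfers by base change from a subfield generated by $\vep$ that is isomorphic to $\mbq(v)$, using $\afSrk\cong\afbfSr\otimes_{\mbq(v)}\mpk$ with $v\mapsto\vep$. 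This base change is legitimate because $\vep$ is transcendental over $\mbq$, since it is not a root of unity, or is algebraic but handled directly in \cite{DDF}. Once that is accepted, the rest is formal.
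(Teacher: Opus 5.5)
Your argument is the paper's: surjectivity of $\zr$ in \eqref{zr} makes $\End_{U(\widehat{\mathfrak{gl}}_n)_\mpk}(\Ogk^{\otimes r})$ coincide with $\End_{\afSrk}(\Ogk^{\otimes r})$, and Theorem~\ref{main} then gives the isomorphism via $\xr$. One aside is wrong but harmless: $\vep$ not being a root of unity does not make it transcendental over $\mbq$ (e.g.\ $\vep=2\in\mbq$), so your base change from $\mbq(v)$ does not cover every case. This does not affect the argument, because the paper also takes the surjectivity of $\zr$ over such $\mpk$ directly from \cite[Th.~3.8.1]{DDF}, as you end up doing.
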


\section{ Consequences for the center of affine quantum Schur algebras}

The double-centralizer theorem established in the previous section
allows us to describe the center of the affine quantum Schur algebra in
terms of the extended affine Hecke algebra and the quantum affine
$\mathfrak{gl}_n$.

We continue to work under the standing assumptions that $\mpk$ is a field of characteristic $0$, $\vep\in\mpk^\times$ is not a root of unity, and $n\geq2$. Throughout this section, $Z(A)$ denotes the center of an algebra $A$.

\subsection{The correspondence of centers}
The double-centralizer property established in Theorem~\ref{main}
yields a natural isomorphism between the centers of the extended affine
Hecke algebra and the affine quantum Schur algebra.
\begin{lemma}\label{center correspondence}
The restriction of the algebra homomorphism $\xi_r$ defined in \eqref{xir} to
$Z(\afHrk)$ induces an algebra isomorphism
$$
Z(\afHrk)
\xrightarrow{\ \sim\ }
Z(\afSrk),
$$
where we use the canonical identification
$
Z(\afSrk^{\operatorname{op}})=Z(\afSrk).$
\end{lemma}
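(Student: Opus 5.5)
The plan is to use the double-centralizer property from Theorem~\ref{main} together with the standard fact that, for a bimodule with double-centralizer property, the centers of the two centralizing algebras coincide. Write $\msT=\Ogk^{\otimes r}$, $\msH=\afHrk$, $\msS=\afSrk=\End_{\msH}(\msT)$. Theorem~\ref{main} gives $\msH^{\oop}\cong\End_{\msS}(\msT)$ via $\xi_r$. Both $\msH$ (acting on the right) and $\msS$ (acting on the left) embed in $\End_\mpk(\msT)$. The embedding of $\msH$ is Theorem~\ref{injection}; that of $\msS$ holds by definition. Each is the full centralizer of the other.

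The core step identifies both centers with the same subalgebra of $\End_\mpk(\msT)$, namely $\msS\cap\rho(\msH)$, where $\rho(\msH)$ is the image of right multiplication.

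First, let $z\in Z(\msH)$. Right multiplication by $z$ is an $\msH$-endomorphism of $\msT$, so it defines an element $\rho(z)\in\msS$. It commutes with every $s\in\msS$ because the actions commute. Hence $\rho(z)\in Z(\msS)$, and the map $z\mapsto \rho(z)$ is an algebra homomorphism $Z(\msH)\to Z(\msS)$. Since $\xi_r$ is injective (Theorem~\ref{injection}), this map is injective. It is exactly the map induced by $\xi_r$ once one identifies $Z(\End_{\msS}(\msT)^{\oop})$ with the operators in $\End_\mpk(\msT)$ commuting with both $\msS$ and $\End_{\msS}(\msT)$.

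Second, let $c\in Z(\msS)$. As an operator on $\msT$, $c$ commutes with all of $\msS$, so $c\in\End_{\msS}(\msT)$. By Theorem~\ref{main} there is a unique $h\in\msH$ with $c(t)=th$ for all $t\in\msT$. To see that $h$ is central, note that $c\in\msS$ commutes with the right action of every $h'\in\msH$. So $t(hh')=c(t)h'=c(th')=t(h'h)$ for all $t$. Hence $\xi_r(hh'-h'h)=0$, and injectivity gives $hh'=h'h$. Thus $h\in Z(\msH)$ and $c=\rho(h)$, which proves surjectivity.

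Finally I check the identifications. The statement's target is phrased via $\xi_r:\msH\to\End_{\msS}(\msT)^{\oop}$. The center of an algebra equals the center of its opposite, and $Z(\End_{\msS}(\msT))$ is precisely $\End_{\msS}(\msT)\cap\msS$ inside $\End_\mpk(\msT)$, since $\msS$ is in turn the full centralizer of $\End_{\msS}(\msT)$. This last fact holds because $\msS=\End_{\msH}(\msT)$ and $\xi_r$ is surjective. So restricting $\xi_r$ lands in $Z(\msS)$ under $Z(\msS^{\oop})=Z(\msS)$.

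The only genuine input is surjectivity of $\xi_r$ (Theorem~\ref{main}) for the second direction, together with injectivity (Theorem~\ref{injection}) to force centrality of the preimage. I do not expect a real obstacle; the main care is bookkeeping of the op-conventions and of which side each algebra acts on.
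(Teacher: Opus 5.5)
Your proof is correct and follows essentially the same route as the paper's. Both show that central elements of $\msH$ act on $\msT$ as operators lying in $\msS\cap\End_{\msS}(\msT)$. Conversely, both lift a central $c$ to some $h$ via surjectivity of $\xi_r$ (Theorem~\ref{main}) and then use injectivity (Theorem~\ref{injection}) to force $h$ to be central. Your extra check identifying $Z(\End_{\msS}(\msT))$ with $\msS\cap\End_{\msS}(\msT)$ is valid bookkeeping that the paper leaves implicit.
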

\begin{proof}
Clearly we have
$ \xi_r(Z(\afHrk))\subseteq \operatorname{End}_{\afHrk} (\Omega_{\mpk}^{\otimes r})^{\operatorname{op}}\cap\operatorname{End}_{\afSrk} (\Omega_{\mpk}^{\otimes r})^{\operatorname{op}}=Z(\afSrk^{\operatorname{op}}). $
On the other hand, let
$ c\in Z(\afSrk^{\operatorname{op}}). $ Then we have $c\in\operatorname{End}_{\afSrk} (\Omega_{\mpk}^{\otimes r})^{\operatorname{op}}$.
Hence by Theorem \ref{main} there exists a unique
$h\in\afHrk$ such that
$  c=\xi_r(h). $ Since $c\in\afSrk^{\operatorname{op}}=\operatorname{End}_{\afHrk} (\Omega_{\mpk}^{\otimes r})^{\operatorname{op}}$ we have
 $ \xi_r(hx-xh) = c\xi_r(x)-\xi_r(x)c=0. $
for every $x\in\afHrk$. Therefore, since  $\xi_r$ is injective, we have
$ hx=xh $ for every $x\in\afHrk$. Consequently,
$ c=\xi_r(h)\in\xi_r(Z(\afHrk)). $
The assertion follows.
\end{proof}

\subsection{ Explicit description of the center}
We now compare the above description of the center with the central elements of the quantum affine $\frak{gl}_n$.
For $m\geq 1$, let $\sfz_m^\pm$ denote the central element of  $U(\widehat{\mathfrak{gl}}_n)_\mpk$
defined in \cite[(2.2.1.2)]{DDF}. We write
$
Z(n)
=
\mpk[\sfz_m^+,\sfz_m^-\mid m\geq 1]
$
for the commutative subalgebra generated by  $\sfz_m^\pm$ ($m\geq 1$) of $U(\widehat{\mathfrak{gl}}_n)_\mpk$. Let
$
Z(n,r)
=
\zeta_r(Z(n))$
where $\zeta_r$
is defined in \eqref{zr}.

The following theorem describes the relation between the center of the affine quantum Schur algebra and that of the quantum affine $\mathfrak{gl}_n$. It confirms the conjecture formulated in \cite[5.2.4]{DDF}.

\begin{theorem}\label{center Z0}
We have
$\zeta_r(Z(U(\widehat{\mathfrak{gl}}_n)_\mpk))=
Z(n,r)=
Z\bigl(\afSrk\bigr).$
\end{theorem}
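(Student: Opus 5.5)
We will prove the chain of inclusions
\[
Z(n,r)\subseteq \zeta_r\bigl(Z(U(\widehat{\mathfrak{gl}}_n)_\mpk)\bigr)\subseteq Z\bigl(\afSrk\bigr)\subseteq Z(n,r).
\]
The first inclusion holds because each $\sfz_m^\pm$ is central in $U(\widehat{\mathfrak{gl}}_n)_\mpk$ by \cite[(2.2.1.2)]{DDF}, so $Z(n)\subseteq Z(U(\widehat{\mathfrak{gl}}_n)_\mpk)$. The second holds because $\zeta_r$ is a surjective algebra homomorphism \cite[Th.~3.8.1]{DDF}, and a surjection carries central elements to central elements. So everything reduces to the last inclusion $Z(\afSrk)\subseteq Z(n,r)$.

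For the last inclusion we use Lemma~\ref{center correspondence}, which identifies $Z(\afSrk)$ with $\xi_r(Z(\afHrk))=\xi_r(\msZ)$. By \eqref{symmetric polynomials}, $\msZ=\mpk[e_1,\ldots,e_{r-1},e_r^{\pm1}]$ in the $X_i$. Since $\mpk$ has characteristic $0$, Newton's identities let us rewrite $\msZ$ as generated by the power sums $p_m=\sum_i X_i^m$ for $m\in\mbz\setminus\{0\}$. Indeed, the $p_m$ with $1\le m\le r$ generate $\mpk[e_1,\ldots,e_r]$. Moreover $e_r^{-1}=\prod_i X_i^{-1}$ is the $r$th elementary symmetric function in the $X_i^{-1}$, which is a polynomial in $p_{-1},\ldots,p_{-r}$. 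Hence $Z(\afSrk)$ is generated as an algebra by the elements $\xi_r(p_{\pm m})$, $m\ge1$, acting on $\Ogk^{\otimes r}$.

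The key step, and the main obstacle, is to show that $\zeta_r(\sfz_m^\pm)$ acts on $\Ogk^{\otimes r}$ exactly as $\xi_r(p_{\pm m})$, possibly up to a nonzero scalar and the sign and normalization conventions of \cite{DDF}. This requires recalling how $\sfz_m^\pm$ is defined in \cite[\S2.2]{DDF}, namely via the Hall algebra or the central generators built from the cyclic quiver. One then computes the action of $\zeta_r(\sfz_m^\pm)$ on a basis vector $\og_\bfi$. I expect the computation to give $\sum_{t=1}^r \og_{i_1}\cdots\og_{i_t\mp mn}\cdots\og_{i_r}$, which by the formula $\og_\bfi X_t^{-1}=\og_{\ldots i_t+n\ldots}$ is precisely $\og_\bfi\cdot p_{\pm m}$ (or $p_{\mp m}$, depending on conventions).

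I would try to cite this action from \cite[\S3.5 and Prop.~3.5.5]{DDF}, where the action of $\sfz_m^\pm$ on tensor space is described. If no such citation is available, I would verify it directly using the fact that $\sfz_m^\pm$ acts on $\Omega$ by shifting $\og_i\mapsto\og_{i\mp mn}$ and that it is primitive under the coproduct, so it acts on $\Ogk^{\otimes r}$ as a sum of single-factor shifts. Once this identification is made, $Z(\afSrk)=\xi_r(\msZ)$ is generated by the elements $\zeta_r(\sfz_m^\pm)$, which gives $Z(\afSrk)\subseteq Z(n,r)$ and completes the chain of equalities.
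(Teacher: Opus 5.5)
Your proposal is correct and follows essentially the same route as the paper. The paper likewise obtains $Z(n,r)\subseteq \zeta_r(Z(U(\widehat{\mathfrak{gl}}_n)_\mpk))\subseteq Z(\afSrk)$ from the surjectivity of $\zeta_r$ and uses Lemma~\ref{center correspondence} to get $Z(\afSrk)=\xi_r(Z(\afHrk))$. It then generates $Z(\afHrk)$ by the power sums $p_m^\pm$ in characteristic $0$ and cites $\zeta_r(\sfz_m^\pm)=\xi_r(p_m^\pm)$ directly from \cite[(3.5.5.2)]{DDF}, so your fallback computation is not needed.
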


\begin{proof}
Since $\zr$ is surjective and $Z(n)\subseteq Z(U(\widehat{\mathfrak{gl}}_n)_\mpk)$, we have
\begin{equation}\label{Z(n,r) subset}
 Z(n,r)\subseteq \zeta_r(Z(U(\widehat{\mathfrak{gl}}_n)_\mpk))\subseteq Z\bigl(\afSrk\bigr).
\end{equation}
By the Bernstein description of the center of the extended affine Hecke algebra,
$$
Z(\afHrk)
=
\mpk[X_1^{\pm1},\ldots,X_r^{\pm1}]^{\mathfrak S_r}=
\mpk[X_1,\ldots,X_r]^{\mathfrak S_r}
\mpk[X_1^{-1},\ldots,X_r^{-1}]^{\mathfrak S_r}.
$$
Let
\[
p_m^+=\sum_{i=1}^rX_i^m,\qquad
p_m^-=\sum_{i=1}^rX_i^{-m}
\]
for $m\geq1$. Since $\mpk$ has characteristic $0$,  we have
$
\mpk[X_1,\ldots,X_r]^{\mathfrak S_r}
=
\mpk[p_1^+,\ldots,p_r^+]$
and
$
\mpk[X_1^{-1},\ldots,X_r^{-1}]^{\mathfrak S_r}
=
\mpk[p_1^-,\ldots,p_r^-].$
Consequently,
\[
Z(\afHrk)
=
\mpk[p_1^+,\ldots,p_r^+,p_1^-,\ldots,p_r^-].
\]
By \cite[(3.5.5.2)]{DDF} we have
$
\zeta_r(\sfz_m^\pm)=\xi_r(p_m^\pm)
$
for $m\geq 1$.
Therefore, by Lemma~\ref{center correspondence},  and under the
canonical identification
$Z(\afSrk^{\text{op}}
)=Z(\afSrk)$ we have
$$
Z(\afSrk)=\xi_r(Z(\afHrk))
=
Z(n,r).$$
Consequently, by \eqref{Z(n,r) subset} we have $\zeta_r(Z(U(\widehat{\mathfrak{gl}}_n)_\mpk))=
Z(n,r)=
Z\bigl(\afSrk\bigr).$
\end{proof}

\end{document}